\definecolor{cite}{rgb}{0.30,0.60,1.00}
\definecolor{url}{rgb}{0.00,0.00,0.80}
\definecolor{link}{rgb}{0.40,0.10,0.20}
\newtheorem{theorem}{Theorem}[section]
\newtheorem{proposition}[theorem]{Proposition}
\newtheorem{lemma}[theorem]{Lemma}
\newtheorem{corollary}[theorem]{Corollary}
\theoremstyle{definition}
\newtheorem{definition}[theorem]{Definition}
\theoremstyle{definition}
\newtheorem{remark}[theorem]{Remark}
\theoremstyle{definition}
\newcommand{\cComplex}{\mathbb{C}}
\newcommand{\multiplicativegroup}[1]{#1^{\ast}}
\newcommand{\characterGroup}[1]{\widehat{#1}}
\newcommand{\Hom}{\mathrm{Hom}}
\newcommand{\conjugate}[1]{\overline{#1}}
\newcommand{\sizeof}[1]{\left|#1\right|}
\newcommand{\finiteField}{\mathbb{F}}
\newcommand{\innerproduct}[2]{\left(#1,#2\right)}
\newcommand{\bilinearPairing}[2]{\left\langle#1,#2\right\rangle}
\newcommand{\fieldCharacter}{\psi}
\newcommand{\rsGammaFactor}[3][\fieldCharacter]{\gamma(#2 \times #3, #1)}
\newcommand{\gammaFactor}[3][\fieldCharacter]{\Gamma(#2 \times #3, #1)}
\newcommand{\normalizedGammaFactor}[3][\fieldCharacter]{\Gamma^{\ast}(#2 \times #3, #1)}
\newcommand{\centralCharacter}[1]{\omega_{#1}}
\newcommand{\ind}[3]{\operatorname{Ind}_{#1}^{#2}{#3}}
\newcommand{\whittaker}{\mathcal{W}}
\newcommand{\unipotentSubgroup}{Z}
\newcommand{\weyllong}{w}
\newcommand{\weylnmmatrix}[2]{\begin{pmatrix}
		& \identityMatrix{#1}\\
		\identityMatrix{#2}
\end{pmatrix}}
\newcommand{\transpose}[1]{{^t}{#1}}
\newcommand{\trace}{\mathrm{tr}}
\newcommand{\GL}{\mathrm{GL}}
\newcommand{\Mat}[2]{M_{#1 \times #2}}
\newcommand{\leviGroup}{D}
\newcommand{\parabolicGroup}{P}
\newcommand{\unipotentRadical}{N}
\newcommand{\weylnm}{\hat{w}}
\newcommand{\identityMatrix}[1]{I_{#1}}
\newcommand{\parabolicTensor}{\overline{\otimes}}
\newcommand{\repBesselFunction}[1]{\mathcal{J}_{#1,\fieldCharacter}}
\newcommand{\pairBesselFunction}[2]{\mathcal{J}_{#1,#2,\fieldCharacter}}
\newcommand{\grepBesselFunction}[2]{\mathcal{J}_{#1,#2}}
\newcommand{\whittakerVector}[1]{v_{#1,\fieldCharacter}}
\newcommand{\pairWhittakerVector}[2]{f_{\whittakerVector{#1},\whittakerVector{#2}}}
\newcommand{\spairWhittakerVector}[2]{v_{#1, #2, \fieldCharacter}}
\newcommand{\gspairWhittakerVector}[3]{v_{#1, #2, #3}}
\newcommand{\tripleWhittakerVector}[3]{v_{#1, #2, #3, \fieldCharacter}}
\newcommand{\whittakerFunctional}[1]{\ell_{#1,\fieldCharacter}}
\newcommand{\representationRes}[1]{\operatorname{Res}_{#1}}
\newcommand{\contragredient}[1]{{#1}^{\vee}}
\newcommand{\parabolicInduction}{\circ}
\newcommand{\mirabolic}{P}
\newcommand{\fourierTransform}[2]{\mathcal{F}_{#1}#2}
\newcommand{\diag}{\operatorname{diag}}
\newcommand{\identityMap}{\operatorname{id}}
\newcommand{\swapHomomorphism}{\operatorname{sw}}
\newcommand{\outerInvolution}[1]{#1^{\iota}}
\newcommand{\abs}[1]{\left|#1\right|}
\newcommand{\SchwartzSpacen}{\mathcal{S}\left(\finiteField^n\right)}
\newcommand{\FieldTrace}[1]{\operatorname{Tr}_{\finiteField_{#1} \slash \finiteField}}
\newcommand{\FieldNorm}[1]{\operatorname{N}_{\finiteField_{#1} \slash \finiteField}}
\title[Gamma factors for representations of $\GL_n$]{On gamma factors for representations of finite general linear groups}
\author{David Soudry}
\address{School of Mathematical Sciences, Sackler Faculty of Exact Sciences, Tel-Aviv University, Israel 69978}
\email{soudry@tauex.tau.ac.il}
\author{Elad Zelingher}
\address{Department of Mathematics, University of Michigan, 1844 East Hall, 530 Church Street, Ann Arbor, MI 48109-1043 USA}
\email{eladz@umich.edu}
\begin{document}

\begin{abstract}
	We use the Langlands--Shahidi method in order to define the Shahidi gamma factor for a pair of irreducible generic representations of $\GL_n\left(\finiteField_q\right)$ and $\GL_m\left(\finiteField_q\right)$. We prove that the Shahidi gamma factor is multiplicative and show that it is related to the Jacquet--Piatetski-Shapiro--Shalika gamma factor. As an application, we prove a converse theorem based on the absolute value of the Shahidi gamma factor, and improve the converse theorem of Nien. As another application, we give explicit formulas for special values of the Bessel function of an irreducible generic representation of $\GL_n\left(\finiteField_q\right)$.
\end{abstract}

\maketitle

\section{Introduction}

In the representation theory of $p$-adic groups, one method of studying irreducible representations is by attaching local factors to the representations. These local factors are complex valued functions of a complex variable. They encode various properties of the representations in question. These local factors usually arise from global integrals representing $L$-functions attached to automorphic representations. Studying these local factors is crucial for understanding the global situation. This has been done successfully in many cases, including the pioneering works of Jacquet--Piatetski-Shapiro--Shalika \cite{Jacquet1983rankin} and Shahidi \cite{shahidi1984fourier, shahidi1990proof}.

Let $\finiteField$ be a finite field with cardinality $q$. A local theory of local factors often has a finite field analog. It allows one to attach ``local constants'' to irreducible representations of the $\finiteField$-points version of the group in consideration. One famous example is the book \cite{piatetskishapirobook} by Piatetski--Shapiro, in which he developed the theory of gamma factors for the tensor product representation of $\GL_2 \times \GL_1$ over finite fields. We also mention the works \cite{Roditty10, Nien14, YeZeligher18, liu2022gamma, liu2022converse} as examples. These local constants usually encode properties analogous to their local factors counterparts. Moreover, these local constant theories often allow one to consider ``toy models'' for analogous local problems. For instance, shortly after Nien's proof of the analog of Jacquet's conjecture for finite fields \cite{Nien14}, Chai proved the conjecture for the $p$-adic group case \cite{chai2019bessel}, where in his proof he used tools analogous to the ones used by Nien.

In her master's thesis \cite{Roditty10}, Roditty-Gershon defined a finite field analog of the gamma factor of Jacquet--Piatetski-Shapiro--Shalika \cite{Jacquet1983rankin}. This gamma factor represents the tensor product representation, attached to two irreducible generic representations $\pi$ and $\sigma$ of $\GL_n\left(\finiteField\right)$ and $\GL_m\left(\finiteField\right)$, respectively, and is denoted $\rsGammaFactor{\pi}{\sigma}$. Later, Rongqing Ye showed that $\rsGammaFactor{\pi}{\sigma}$ is related to its local field counterpart through level zero supercuspidal representations \cite{Ye18}. Using this relation and the local Langlands correspondence, Rongqing Ye and the second author were able to express $\rsGammaFactor{\pi}{\sigma}$ as a product of Gauss sums \cite{ye2021epsilon}.

The theory of the finite field version of the gamma factor associated to the tensor product, as it currently appears in the literature, is in some sense not complete. The first problem is that the gamma factor $\rsGammaFactor{\pi}{\sigma}$ is currently not defined for all irreducible generic representations $\pi$ and $\sigma$. It is only defined when $n \ge m$, and under the assumption that $\pi$ is cuspidal (and if $n = m$, $\sigma$ is also required to be cuspidal). One can tweak the proofs so they will work for all irreducible generic representations $\pi$ and $\sigma$, such that $\pi$ and $\contragredient{\sigma}$ have disjoint cuspidal support, but that is not enough in order to define $\rsGammaFactor{\pi}{\sigma}$ for all pairs $\pi$ and $\sigma$. One can try to define $\rsGammaFactor{\pi}{\sigma}$ naively using the expression involving the Bessel functions of $\pi$ and $\sigma$ (see \Cref{section:whittaker-models-and-bessel-functions} for the definition of the Bessel function), but this leads to the second problem. The second problem is that the current theory lacks the multiplicativity property of the gamma factor. If one naively extends the definition $\rsGammaFactor{\pi}{\sigma}$ using the approach suggested above, it is not clear that the gamma factor would be multiplicative. Both of these difficulties need to be resolved for applications as in \cite{zelingher2022values}.

The Langlands--Shahidi method provides an alternative approach that solves both of these problems. In this paper, we use this method to define a finite field version of the Shahidi gamma factor. We briefly describe the construction now. Let $\pi$ and $\sigma$ be representations of Whittaker type of $\GL_n\left(\finiteField\right)$ and $\GL_m\left(\finiteField\right)$, respectively. In \Cref{subsection:intertwining-operator}, we consider an intertwining operator $U_{\sigma, \pi} \colon \sigma \parabolicInduction \pi \rightarrow \pi \parabolicInduction \sigma$, where $\parabolicInduction$ denotes parabolic induction. In \Cref{subsection:shahidi-gamma-factor}, given Whittaker vectors $\whittakerVector{\pi} \in \pi$ and $\whittakerVector{\sigma} \in \sigma$, we define Whittaker vectors $\spairWhittakerVector{\pi}{\sigma} \in \pi \parabolicInduction \sigma$ and $\spairWhittakerVector{\sigma}{\pi} \in \sigma \parabolicInduction \pi$. By uniqueness of the Whittaker vectors, we have that there exists a constant $\gammaFactor{\pi}{\sigma} \in \cComplex$, such that $$U_{\sigma, \pi} \spairWhittakerVector{\sigma}{\pi} = \gammaFactor{\pi}{\sigma} \cdot \spairWhittakerVector{\pi}{\sigma}.$$
We call $\gammaFactor{\pi}{\sigma}$ the \emph{Shahidi gamma factor associated to $\pi$ and $\sigma$}. This is a finite analog of Shahidi's local coefficient \cite{shahidi1984fourier}.

We prove properties of $\gammaFactor{\pi}{\sigma}$, the most important one is that it is multiplicative (\Cref{thm:multiplicitavity-of-gamma-factors}).

\begin{theorem}
	Let $\pi$, $\sigma_1$ and $\sigma_2$ be representations of Whittaker type of $\GL_n\left(\finiteField\right)$, $\GL_{m_1}\left(\finiteField\right)$ and $\GL_{m_2}\left(\finiteField\right)$, respectively. Then $$ \gammaFactor{\pi}{(\sigma_1 \parabolicInduction \sigma_2)} = \gammaFactor{\pi}{\sigma_1} \cdot \gammaFactor{\pi}{\sigma_2}.$$
\end{theorem}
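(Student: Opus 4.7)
The plan is to reduce multiplicativity to a cocycle-type factorization of the intertwining operator $U_{(\sigma_1 \parabolicInduction \sigma_2), \pi}$, combined with the compatibility of the Whittaker vectors $\spairWhittakerVector{\cdot}{\cdot}$ with induction in stages. By transitivity of parabolic induction, I identify both $(\sigma_1 \parabolicInduction \sigma_2) \parabolicInduction \pi$ and $\pi \parabolicInduction (\sigma_1 \parabolicInduction \sigma_2)$ as representations induced from the standard parabolic with Levi $\GL_{m_1} \times \GL_{m_2} \times \GL_n$, placing the block $\pi$ at opposite ends. The Weyl element realizing $U_{(\sigma_1 \parabolicInduction \sigma_2), \pi}$ then decomposes as a product of two simple block-swaps, and I expect a corresponding factorization
$$U_{(\sigma_1 \parabolicInduction \sigma_2), \pi} = \bigl(U_{\sigma_1, \pi} \parabolicInduction \identityMap_{\sigma_2}\bigr) \circ \bigl(\identityMap_{\sigma_1} \parabolicInduction U_{\sigma_2, \pi}\bigr),$$
where the right-hand factor swaps $\pi$ past $\sigma_2$ and the left-hand factor subsequently swaps $\pi$ past $\sigma_1$. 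Establishing this will require unpacking the integral definition from \Cref{subsection:intertwining-operator} and performing a change of variables in the unipotent integration, exploiting that the unipotent subgroup over which one integrates in the composite splits compatibly with the two block-swaps.

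Next, I would verify that the Whittaker vectors behave correctly under this staging. Concretely, I claim that $\spairWhittakerVector{(\sigma_1 \parabolicInduction \sigma_2)}{\pi}$, built from the inducing Whittaker vector $\whittakerVector{\sigma_1 \parabolicInduction \sigma_2}$ and $\whittakerVector{\pi}$, coincides under the identification above with the iterated Whittaker vector $\tripleWhittakerVector{\sigma_1}{\sigma_2}{\pi}$, obtained by first forming $\spairWhittakerVector{\sigma_2}{\pi}$ inside $\sigma_2 \parabolicInduction \pi$ and then inducing with $\whittakerVector{\sigma_1}$. This is essentially uniqueness of Whittaker vectors in a representation of Whittaker type, applied separately to the inner and outer induced representations, but it still must be traced through the identification.

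With these two ingredients in place, the computation is short:
$$U_{(\sigma_1 \parabolicInduction \sigma_2), \pi}\, \spairWhittakerVector{(\sigma_1 \parabolicInduction \sigma_2)}{\pi} = \bigl(U_{\sigma_1, \pi} \parabolicInduction \identityMap_{\sigma_2}\bigr) \bigl(\identityMap_{\sigma_1} \parabolicInduction U_{\sigma_2, \pi}\bigr) \tripleWhittakerVector{\sigma_1}{\sigma_2}{\pi}.$$
The inner operator replaces $\spairWhittakerVector{\sigma_2}{\pi}$ by $\gammaFactor{\pi}{\sigma_2} \spairWhittakerVector{\pi}{\sigma_2}$, producing a scalar multiple of $\tripleWhittakerVector{\sigma_1}{\pi}{\sigma_2}$; the outer operator then contributes a factor $\gammaFactor{\pi}{\sigma_1}$, yielding $\gammaFactor{\pi}{\sigma_1}\gammaFactor{\pi}{\sigma_2} \cdot \spairWhittakerVector{\pi}{(\sigma_1 \parabolicInduction \sigma_2)}$. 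Comparing with the defining equation of $\gammaFactor{\pi}{(\sigma_1 \parabolicInduction \sigma_2)}$ yields the claim. The main obstacle will be the bookkeeping for the cocycle factorization and the identification of the iterated Whittaker vectors; once those structural statements are set up, the scalar computation is essentially forced by uniqueness of the Whittaker model.
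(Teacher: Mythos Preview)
Your proposal is correct and follows essentially the same route as the paper: the paper proves the factorization $\tilde{U}_{\sigma_1 \parabolicInduction \sigma_2, \pi} = \tilde{U}_{\sigma_1,\pi} \circ \tilde{U}_{\sigma_2,\pi}$ by an explicit unipotent change of variables, identifies the iterated Whittaker vectors $\tripleWhittakerVector{\sigma_1}{\sigma_2}{\pi}$ via transitivity of parabolic induction and uniqueness, and then tracks the scalar exactly as you describe. The only difference is cosmetic --- the paper works with the ``flattened'' operators $\tilde{U}$ on $\sigma_1 \parabolicInduction \sigma_2 \parabolicInduction \pi$ rather than your $\identityMap \parabolicInduction U$ notation, but the content is identical.
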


We also express $\gammaFactor{\pi}{\sigma}$ in terms of the Bessel functions associated with $\pi$ and $\sigma$ when both representations are irreducible. We show that if $n \ge m$, then up to some simple factors, $\gammaFactor{\pi}{\sigma}$ is given by the naive extension of $\rsGammaFactor{\pi}{\contragredient{\sigma}}$ discussed above (\Cref{thm:explicit-formula-for-gamma-factors-in-terms-of-bessel-functions}). We deduce a relation between the Shahidi gamma factor and the Jacquet--Piatetski-Shapiro--Shalika gamma factor (\Cref{cor:relation-between-intertwining-and-rankin-selberg}).
\begin{theorem}Let $\pi$ and $\sigma$ be irreducible generic representations of $\GL_n\left(\finiteField\right)$ and $\GL_m\left(\finiteField\right)$, respectively. Suppose that $\pi$ is cuspidal and $n \ge m$. If $n = m$, suppose that $\sigma$ is also cuspidal. Then
$$\gammaFactor{\pi}{\sigma} = q^{\frac{m \left(2n - m - 1\right)}{2}} \centralCharacter{\sigma}\left(-1\right) \rsGammaFactor{\pi}{\contragredient{\sigma}}.$$
\end{theorem}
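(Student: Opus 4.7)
The plan is to deduce the identity as a direct corollary of the Bessel-function expression for $\gammaFactor{\pi}{\sigma}$ given by Theorem \ref{thm:explicit-formula-for-gamma-factors-in-terms-of-bessel-functions}, once the right hand side is recognized as a simple transform of the Bessel-function expression for $\rsGammaFactor{\pi}{\contragredient{\sigma}}$ coming from Roditty-Gershon's definition. Under the stated cuspidality assumptions both expressions are legitimate, so the work reduces to comparing the two explicit sums on the nose.

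First, I would substitute the formula of Theorem \ref{thm:explicit-formula-for-gamma-factors-in-terms-of-bessel-functions}, which writes $\gammaFactor{\pi}{\sigma}$ as a sum involving the product $\repBesselFunction{\pi}(\cdot)\,\repBesselFunction{\sigma}(\cdot)$ against a character of an appropriate unipotent subgroup, with the sum ranging over representatives arising from a Bruhat decomposition. In parallel, I would recall the Bessel-function expression for $\rsGammaFactor{\pi}{\contragredient{\sigma}}$ from Roditty-Gershon's thesis, which pairs $\repBesselFunction{\pi}$ against $\repBesselFunction{\contragredient{\sigma}}$ over a quotient of $\GL_m$.

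The bridge between the two formulas is the standard identity
$$ \repBesselFunction{\contragredient{\sigma}}(g) \;=\; \repBesselFunction{\sigma}\bigl(\weyllong_m\, \transpose{g}^{-1}\, \weyllong_m^{-1}\bigr), $$
where $\weyllong_m$ is the long Weyl element of $\GL_m$. Substituting this into the Rankin--Selberg sum and performing the change of variables $g \mapsto \weyllong_m \transpose{g}^{-1} \weyllong_m^{-1}$ converts an expression in $\repBesselFunction{\contragredient{\sigma}}$ into one in $\repBesselFunction{\sigma}$; the determinant picked up along the way produces $\centralCharacter{\sigma}(-1)$, while the mismatch of summation domains contributes the factor $q^{\frac{m(2n-m-1)}{2}} = q^{(n-m)m + \binom{m}{2}}$. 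The exponent is precisely the dimension of the unipotent radical of the standard parabolic with Levi $\GL_{n-m} \times \GL_m$ plus that of the upper-triangular unipotent of $\GL_m$, which is the cardinality ratio between the Shahidi-style sum (spread over a full Bruhat cell in $\GL_n$) and the Rankin--Selberg sum (living on $\unipotentSubgroup_m \backslash \GL_m$ after unfolding).

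The main obstacle will be purely combinatorial bookkeeping: reconciling the choice of Bruhat representatives, the Weyl element conventions, and the additive-character twists appearing in the two Bessel-function formulas, so that the constant comes out exactly as $q^{\frac{m(2n-m-1)}{2}} \centralCharacter{\sigma}(-1)$ and not some variant. No new conceptual input beyond Theorem \ref{thm:explicit-formula-for-gamma-factors-in-terms-of-bessel-functions} and the classical Bessel identity is required; once the dictionary is set up, the identity drops out immediately.
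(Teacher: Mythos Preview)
Your approach is correct and is exactly what the paper does: compare the Bessel-function formula for $\gammaFactor{\pi}{\sigma}$ from Theorem~\ref{thm:explicit-formula-for-gamma-factors-in-terms-of-bessel-functions} with the Bessel-function formulas for $\rsGammaFactor{\pi}{\contragredient{\sigma}}$ from Propositions~\ref{prop:rankin-selberg-gamma-factor-in-terms-of-bessel-functions} and~\ref{prop:rankin-selberg-gamma-factor-m-equals-n-in-terms-of-bessel-functions}. However, you are anticipating work that is not there. Theorem~\ref{thm:explicit-formula-for-gamma-factors-in-terms-of-bessel-functions} already expresses $\gammaFactor{\pi}{\sigma}$ as a sum involving $\grepBesselFunction{\contragredient{\sigma}}{\fieldCharacter^{-1}}$, not $\repBesselFunction{\sigma}$, and the constants $q^{\frac{m(2n-m-1)}{2}}$ and $\centralCharacter{\sigma}\left(-1\right)$ are already sitting in front of that sum; the remaining sum over $\unipotentSubgroup_m \backslash \GL_m\left(\finiteField\right)$ is literally the same as the one appearing in Proposition~\ref{prop:rankin-selberg-gamma-factor-in-terms-of-bessel-functions} (respectively Proposition~\ref{prop:rankin-selberg-gamma-factor-m-equals-n-in-terms-of-bessel-functions}) with $\sigma$ replaced by $\contragredient{\sigma}$. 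So no contragredient Bessel identity, no change of variables, and no combinatorial bookkeeping are needed --- the corollary is a one-line observation that the two displayed sums coincide. The manipulation you are planning (passing from $\repBesselFunction{\sigma}$ to $\grepBesselFunction{\contragredient{\sigma}}{\fieldCharacter^{-1}}$ and extracting $\centralCharacter{\sigma}(-1)$) has already been carried out in the \emph{derivation} of Theorem~\ref{thm:explicit-formula-for-gamma-factors-in-terms-of-bessel-functions} itself.
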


The relation between both gamma factors allows us to give a representation theoretic interpretation for the absolute value of the Shahidi gamma factor. We show that, in some sense, the absolute value of the Shahidi gamma factor serves as a good substitute for the order of the pole of the local $L$-factor associated with the tensor product representation. Let us stress that the relation to the Jacquet--Piatetski-Shapiro--Shalika gamma factor is crucial for these results. The following theorem can be seen as an analog of \cite[Section 8.1]{Jacquet1983rankin}.

\begin{theorem}
	Let $\pi$ be an irreducible generic representation of $\GL_n\left(\finiteField\right)$ and let $\sigma$ be an irreducible cuspidal representation of $\GL_m\left(\finiteField\right)$. Then
	$$ \abs{q^{\frac{-nm}{2}} \cdot \gammaFactor{\pi}{\sigma}} = q^{- \frac{d_\pi\left(\sigma\right) m}{2}},$$
	where $d_\pi\left(\sigma\right)$ is the number of times $\sigma$ appears in the cuspidal support of $\pi$.
\end{theorem}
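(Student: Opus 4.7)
The plan is to reduce to the cuspidal case of $\pi$ via multiplicativity, and then to invoke the preceding theorem together with known absolute values of the Jacquet--Piatetski-Shapiro--Shalika gamma factor on pairs of irreducible cuspidal representations.

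Write $\pi$ as the unique irreducible generic constituent of the parabolic induction $\tau_1 \parabolicInduction \cdots \parabolicInduction \tau_k$ of its cuspidal support, where each $\tau_i$ is an irreducible cuspidal representation of $\GL_{n_i}(\finiteField)$ and $n_1 + \cdots + n_k = n$. Since the Shahidi gamma factor depends only on the Whittaker model, and a representation of Whittaker type shares its Whittaker model with its unique irreducible generic constituent, $\gammaFactor{\pi}{\sigma} = \gammaFactor{\tau_1 \parabolicInduction \cdots \parabolicInduction \tau_k}{\sigma}$. Applying multiplicativity in the first variable (which should follow by running the intertwining-operator argument proving \Cref{thm:multiplicitavity-of-gamma-factors} with the roles of $\pi$ and $\sigma$ reversed) gives
$$\gammaFactor{\pi}{\sigma} = \prod_{i=1}^{k}\gammaFactor{\tau_i}{\sigma}.$$
It then suffices to show $\abs{\gammaFactor{\tau_i}{\sigma}} = q^{n_i m/2}$ when $\tau_i \not\cong \sigma$, and $\abs{\gammaFactor{\tau_i}{\sigma}} = q^{(n_i - 1)m/2}$ when $\tau_i \cong \sigma$ (which forces $n_i = m$). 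Summing these contributions over $i$ and recalling $d_\pi(\sigma) = \#\{i : \tau_i \cong \sigma\}$ yields $\abs{\gammaFactor{\pi}{\sigma}} = q^{nm/2 - d_\pi(\sigma) m/2}$, equivalent to the claim.

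For $n_i \ge m$, the preceding theorem gives $\gammaFactor{\tau_i}{\sigma} = q^{m(2n_i - m - 1)/2}\centralCharacter{\sigma}(-1)\rsGammaFactor{\tau_i}{\contragredient{\sigma}}$, and $\abs{\rsGammaFactor{\tau_i}{\contragredient{\sigma}}}$ takes its generic value for cuspidal pairs of the relevant dimensions, dropping by a factor of exactly $q^{m/2}$ in the pole case $\tau_i \cong \sigma$, where one trivial-character Gauss sum replaces a nontrivial one in the explicit formula of \cite{ye2021epsilon}; substitution then gives the claimed absolute values in this range. For $n_i < m$, the preceding theorem does not apply directly; here I would use the composition $U_{\pi, \sigma} \circ U_{\sigma, \pi}$, which is a scalar on $\sigma \parabolicInduction \tau_i$ (by Schur's lemma applied to the irreducible generic constituent), yielding a relation $\gammaFactor{\tau_i}{\sigma}\gammaFactor{\sigma}{\tau_i} = c(\tau_i, \sigma)$ with an explicitly computable absolute value. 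Applying the preceding theorem to $\gammaFactor{\sigma}{\tau_i}$ (with $\sigma$ in the larger-rank slot) then recovers $\abs{\gammaFactor{\tau_i}{\sigma}}$; in this range $\tau_i \not\cong \sigma$ automatically, so the result is $q^{n_i m/2}$, as required.

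The main obstacle is the $n_i < m$ case: one needs a clean proportionality relation between $\gammaFactor{\tau_i}{\sigma}$ and $\gammaFactor{\sigma}{\tau_i}$ together with control over its absolute value, so that the preceding theorem can be transported to this range. Once this is available, the remainder is a straightforward assembly of multiplicativity in the first variable with the known behavior of the Jacquet--Piatetski-Shapiro--Shalika gamma factor at the pole.
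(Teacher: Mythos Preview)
Your overall architecture is the same as the paper's: reduce via multiplicativity to cuspidal pairs $(\tau_i,\sigma)$, then compute $\abs{\gammaFactor{\tau_i}{\sigma}}$ case by case. The differences are in how you handle the individual cuspidal cases, and here the paper is considerably simpler.

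For $n_i < m$ you propose studying $U_{\tau_i,\sigma}\circ U_{\sigma,\tau_i}$ and invoking Schur's lemma. This is unnecessary: the paper has already established the duality $\gammaFactor{\pi}{\sigma} = \gammaFactor[\fieldCharacter^{-1}]{\contragredient{\sigma}}{\contragredient{\pi}}$ (\Cref{thm:gamma-factor-of-involution-of-representations}), which immediately swaps the roles and reduces $n_i<m$ to the $n_i>m$ situation already covered by \Cref{cor:relation-between-intertwining-and-rankin-selberg}. So what you flag as ``the main obstacle'' in fact dissolves once you use this relation. Your Schur's-lemma route, by contrast, runs into the difficulty that $\sigma \parabolicInduction \tau_i$ need not be irreducible, and computing the scalar $c(\tau_i,\sigma)$ on the generic constituent is essentially re-deriving the product $\gammaFactor{\tau_i}{\sigma}\gammaFactor{\sigma}{\tau_i}$ by hand.

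For the pole case $\tau_i\cong\sigma$ you appeal to the explicit Gauss-sum formulas of \cite{ye2021epsilon}. The paper avoids this: \Cref{prop:size-of-rs-gamma-factor-m-equals-n} records $\abs{\rsGammaFactor{\pi}{\contragredient{\pi}}}=1$, coming from the elementary computation $\rsGammaFactor{\pi}{\contragredient{\pi}}=-1$ in \Cref{appendix:computation-of-rankin-selberg-pi-pi-dual} (an orbit count in $\finiteField^n$, no Langlands correspondence or Gauss sums required). Likewise, for $\tau_i\not\cong\sigma$ with $n_i=m$ and for all $n_i>m$, the paper gets the absolute values by applying the functional equation twice (\Cref{prop:size-of-rs-gamma-factor-m-smaller-than-n} and \Cref{cor:size-of-rankin-selberg-m-equals-n}), rather than by any explicit formula.

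In short: your proof would go through, but you are working harder than necessary. The two tools you are missing are the contragredient symmetry \Cref{thm:gamma-factor-of-involution-of-representations} (for $n_i<m$) and the elementary absolute-value results \Cref{prop:size-of-rs-gamma-factor-m-smaller-than-n}, \Cref{prop:size-of-rs-gamma-factor-m-equals-n} (for the cuspidal pairs).
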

This allows us to deduce a converse theorem based on the absolute value of the normalized Shahidi gamma factor. Similar theorems in the local setting were given by Gan and his collaborators in many works (see \cite[Lemma 12.3]{gan2012representations}, \cite[Lemma A.6]{gan2016gross} and \cite[Lemma A.6]{atobe2017local}), but our proof is done on the ``group side'' rather than on the ``Galois side''.

\begin{theorem}
	Let $\pi_1$ and $\pi_2$ be two irreducible generic representations of $\GL_{n_1}\left(\finiteField\right)$ and $\GL_{n_2}\left(\finiteField\right)$, respectively. Assume that for every $m$ and every irreducible cuspidal representation $\sigma$ of $\GL_m\left(\finiteField\right)$ we have
	$$ \abs{q^{-\frac{n_1 m}{2}} \cdot \gammaFactor{\pi_1}{\sigma}} = \abs{q^{-\frac{n_2 m}{2}} \cdot \gammaFactor{\pi_2}{\sigma}}.$$
	Then $n_1 = n_2$ and $\pi_1 \cong \pi_2$.
\end{theorem}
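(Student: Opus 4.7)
The plan is to use the preceding theorem, which computes $\abs{q^{-nm/2} \gammaFactor{\pi}{\sigma}}$ explicitly in terms of the cuspidal-support multiplicity $d_\pi(\sigma)$, to convert the hypothesis into equalities of these multiplicities, and then invoke the classification of irreducible generic representations of $\GL_n\left(\finiteField\right)$ by their cuspidal support. Applying the preceding theorem to both $\pi_1$ and $\pi_2$, the hypothesis becomes
$$q^{-d_{\pi_1}(\sigma) m / 2} = q^{-d_{\pi_2}(\sigma) m / 2}$$
for every $m \ge 1$ and every irreducible cuspidal representation $\sigma$ of $\GL_m\left(\finiteField\right)$. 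Since $q \ge 2$, this forces $d_{\pi_1}(\sigma) = d_{\pi_2}(\sigma)$, so the cuspidal supports of $\pi_1$ and $\pi_2$ coincide as multisets.

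Write this common cuspidal support as an unordered tuple $\left(\tau_1, \ldots, \tau_k\right)$, with $\tau_i$ irreducible cuspidal of $\GL_{m_i}\left(\finiteField\right)$. Summing dimensions of the underlying general linear groups yields
$$n_1 = \sum_{i=1}^k m_i = n_2,$$
giving the equality of the ranks. Set $n := n_1 = n_2$ for the remainder of the argument.

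For the isomorphism $\pi_1 \cong \pi_2$, I would use the fact that an irreducible generic representation of $\GL_n\left(\finiteField\right)$ is determined up to isomorphism by its cuspidal support. Concretely, each $\pi_j$ ($j=1,2$) occurs as an irreducible generic constituent of the induced representation $\Pi := \tau_1 \parabolicInduction \cdots \parabolicInduction \tau_k$ (the isomorphism class of $\Pi$ as a virtual representation is insensitive to the ordering of the $\tau_i$'s, which is all that is needed). Because each $\tau_i$ is cuspidal, hence generic with Whittaker multiplicity one, $\Pi$ is of Whittaker type, i.e.\ admits a unique (up to scalar) Whittaker functional. Consequently $\Pi$ has a unique irreducible generic constituent, and thus $\pi_1 \cong \pi_2$.

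The main obstacle lies in the last paragraph: one must rigorously justify that $\Pi$ has Whittaker multiplicity exactly one, which is the input that distinguishes a unique irreducible generic subquotient. This is standard and can be verified via Frobenius reciprocity together with the observation that the twisted Jacquet module along the Whittaker character, applied to a parabolic induction of generic representations, is one-dimensional. Once this is granted, the remainder of the argument is a formal passage from cuspidal support to isomorphism class, and the converse theorem follows.
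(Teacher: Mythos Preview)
Your proposal is correct and follows essentially the same approach as the paper: use \Cref{thm:cuspidal-support-is-determined-by-gamma-factor-absolute-value} to deduce that $\pi_1$ and $\pi_2$ have the same cuspidal support, and then invoke the fact (\Cref{thm:unique-whittaker-vector-parabolic-induction}) that a parabolic induction of representations of Whittaker type is of Whittaker type, so it has a unique irreducible generic constituent. The ``obstacle'' you flag is exactly the content of \Cref{thm:unique-whittaker-vector-parabolic-induction}, which the paper simply cites.
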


Our results combined with Nien's converse theorem \cite{Nien14} allow us to deduce a converse theorem that holds under weaker assumptions. This is similar to \cite[Section 2.4]{jiang2015towards}.
\begin{theorem}
	Let $\pi_1$ and $\pi_2$ be irreducible generic representations of $\GL_n\left(\finiteField\right)$ with the same central character. Suppose that for any $1 \le m \le \frac{n}{2}$ and any irreducible cuspidal representation $\sigma$ of $\GL_m\left(\finiteField\right)$ we have
	$$ \gammaFactor{\pi_1}{\sigma} = \gammaFactor{\pi_2}{\sigma}.$$
	Then $\pi_1 \cong \pi_2$.
\end{theorem}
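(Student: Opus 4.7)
The plan is to combine the absolute-value converse theorem above with Nien's converse theorem for cuspidals \cite{Nien14}: first use the former to pin down the cuspidal supports of $\pi_1$ and $\pi_2$ up to a possible single ``large'' cuspidal constituent, then peel that constituent off using multiplicativity, and finally finish with Nien.

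Taking absolute values of both sides of the hypothesis and invoking the previous theorem yields
\[
q^{-d_{\pi_1}(\sigma)\,m/2} = \abs{q^{-nm/2}\,\gammaFactor{\pi_1}{\sigma}} = \abs{q^{-nm/2}\,\gammaFactor{\pi_2}{\sigma}} = q^{-d_{\pi_2}(\sigma)\,m/2}
\]
for every irreducible cuspidal $\sigma$ of $\GL_m(\finiteField)$ with $1\le m\le n/2$, where $d_{\pi_i}(\sigma)$ denotes the multiplicity of $\sigma$ in the cuspidal support of $\pi_i$; hence $d_{\pi_1}(\sigma)=d_{\pi_2}(\sigma)$ for all such $\sigma$. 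Summing $m\cdot d_{\pi_i}(\sigma)$ over these $\sigma$ shows that $\pi_1$ and $\pi_2$ receive the same total dimensional contribution from cuspidal constituents of dimension at most $n/2$. Since a cuspidal constituent of dimension strictly greater than $n/2$ can appear at most once, subtracting from $n$ leaves two cases: either (a) neither $\pi_i$ has such a ``large'' constituent, or (b) both do, of the same dimension $k>n/2$.

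In case (a), $\pi_1$ and $\pi_2$ have identical cuspidal supports; both being irreducible generic, each is the unique irreducible generic constituent of the corresponding parabolic induction, so $\pi_1\cong\pi_2$. In case (b), write $\pi_i\cong\tau_i\parabolicInduction\rho$, where $\tau_i$ is irreducible cuspidal of $\GL_k(\finiteField)$ and $\rho$ is the unique irreducible generic representation of $\GL_{n-k}(\finiteField)$ whose cuspidal support is the common ``small'' part. The hypothesis $\centralCharacter{\pi_1}=\centralCharacter{\pi_2}$ together with $\centralCharacter{\pi_i}=\centralCharacter{\tau_i}\,\centralCharacter{\rho}$ gives $\centralCharacter{\tau_1}=\centralCharacter{\tau_2}$. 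If $k=n$ then $\pi_i=\tau_i$ and Nien's theorem applies directly, since $m\le n/2=k/2$. If $n/2<k<n$, use multiplicativity of $\gammaFactor{\cdot}{\cdot}$ in the first variable (which follows by combining the multiplicativity stated above with the symmetry of the Jacquet--Piatetski-Shapiro--Shalika gamma factor) to write
\[
\gammaFactor{\pi_i}{\sigma} = \gammaFactor{\tau_i}{\sigma}\cdot\gammaFactor{\rho}{\sigma}.
\]
The factor $\gammaFactor{\rho}{\sigma}$ is nonzero, because up to the explicit normalization constant it equals a product of cuspidal-cuspidal Jacquet--Piatetski-Shapiro--Shalika gamma factors, each a nonzero product of Gauss sums by \cite{ye2021epsilon}. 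Dividing yields $\gammaFactor{\tau_1}{\sigma}=\gammaFactor{\tau_2}{\sigma}$ for every cuspidal $\sigma$ of $\GL_m(\finiteField)$ with $1\le m\le n/2$, and the monotonicity $k\le n\Rightarrow k/2\le n/2$ ensures that Nien's criterion $m\le k/2$ is contained in our testing range. Nien's theorem then gives $\tau_1\cong\tau_2$, and therefore $\pi_1\cong\pi_2$.

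The main obstacle is case (b): the absolute-value argument sees only the ``small'' part of the cuspidal support, so the identification of the large component is not free and must be extracted by dividing $\gammaFactor{\pi_i}{\sigma}$ by $\gammaFactor{\rho}{\sigma}$. The reduction to Nien succeeds precisely because the inequality $k\le n\Rightarrow k/2\le n/2$ guarantees that the hypothesis testing range $m\le n/2$ is wide enough for Nien's criterion on the pair $(\tau_1,\tau_2)$.
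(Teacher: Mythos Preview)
Your argument is correct and uses the same three ingredients as the paper --- the absolute-value criterion (\Cref{thm:cuspidal-support-is-determined-by-gamma-factor-absolute-value}) to detect cuspidal support, multiplicativity to divide out known constituents, and Nien's theorem for the residual cuspidal piece --- but organizes them differently. The paper argues by induction on the cardinality of the cuspidal support of $\pi_1$: at each step it locates one small constituent (of size $\le n/2$) common to both supports, removes it via multiplicativity, and inducts on the smaller pair; Nien is invoked only in the base case where $\pi_1$ itself is cuspidal. You instead identify \emph{all} small constituents at once, observe that at most one large cuspidal can remain on each side (necessarily of the same dimension $k$), and apply Nien directly to that pair $(\tau_1,\tau_2)$. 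Your route avoids the induction and is arguably cleaner; the paper's approach has the mild advantage that it never needs to name the common small part $\rho$ explicitly.

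Two small points to tighten. First, Nien's theorem as quoted in the paper requires equality of gamma factors against all irreducible \emph{generic} $\sigma$ with $m\le k/2$, whereas you have it only for cuspidal $\sigma$; the extension is immediate by multiplicativity in the second variable (the paper makes exactly this observation at the start of its own proof), but you should state it. Second, writing $\pi_i\cong\tau_i\parabolicInduction\rho$ is not literally correct: $\pi_i$ is the unique irreducible generic \emph{subrepresentation} of $\tau_i\parabolicInduction\rho$, which need not be irreducible. This does not affect your argument, since $\gammaFactor{\pi_i}{\sigma}=\gammaFactor{(\tau_i\parabolicInduction\rho)}{\sigma}$ by the remark following \Cref{thm:multiplicitavity-of-gamma-factors}, but the wording should be adjusted. (For the nonvanishing of $\gammaFactor{\rho}{\sigma}$, note that \Cref{prop:normalized-gamma-factor-size-for-cuspidal-representations} together with multiplicativity already suffices; the appeal to \cite{ye2021epsilon} is not needed.)
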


As another application of our results, we find explicit formulas for special values of the Bessel function of an irreducible generic representation $\pi$. The first formula (\Cref{thm:bessel-function-special-value}) expresses $\repBesselFunction{\pi}\left(\begin{smallmatrix}
	& \identityMatrix{n-1}\\
	c
\end{smallmatrix}\right)$ as an exotic Kloosterman sum \cite[Page 152]{katz1993estimates}. This formula is already known in the literature by the work of Curtis--Shinoda \cite{curtis2004zeta}, but our proof is based on multiplicativity of the Shahidi gamma factor, rather than on Deligne--Lusztig theory. The second formula we find (\Cref{thm:three-blocks-special-bessel-value}) expresses $\repBesselFunction{\pi}\left(\begin{smallmatrix}
& & -c'\\
& \identityMatrix{n-2}\\
c
\end{smallmatrix}\right)$ as a twisted convolution of values of the form $\repBesselFunction{\pi}\left(\begin{smallmatrix}
& \identityMatrix{n-1}\\
c
\end{smallmatrix}\right)$ and $\repBesselFunction{\pi}\left(\begin{smallmatrix}
& c'\\
\identityMatrix{n-1}
\end{smallmatrix}\right)$. Such a formula was given by Chang for $n=3$ \cite{chang1976decomposition} and then generalized by Shinoda--Tulunay \cite{shinoda2005representations} for $n=4$. Chang's method is based on the Gelfand--Graev algebra, while our method is based on formulas we found for the Shahidi gamma factor.

This paper is based on a unpublished note by the first author \cite{soudry1979} from 1979.

\section{Preliminaries}

\subsection{Parabolic induction}

Given a sequence of positive integers $n_1,\dots,n_r$, we denote by $\parabolicGroup_{n_1,\dots,n_r}$ the parabolic subgroup of $\GL_{n_1+\dots+n_r} \left(\finiteField\right)$ corresponding to the composition $\left(n_1,\dots,n_r\right)$. That is, $$\parabolicGroup_{n_1,\dots,n_r} = \leviGroup_{n_1,\dots,n_r} \ltimes \unipotentRadical_{n_1,\dots,n_r},$$ where
\begin{align*}
\leviGroup_{n_1,\dots,n_r} &= \left\{ \diag\left(g_1,\dots,g_r\right) \mid \forall 1 \le j \le r,  g_j \in \GL_{n_j}\left(\finiteField\right) \right\},\\
\unipotentRadical_{n_1,\dots,n_r} &= \left\{ \begin{pmatrix}
\identityMatrix{n_1} & \ast & \ast & \ast \\
& \identityMatrix{n_2} & \ast & \ast\\
& & \ddots & \ast \\	
& & &  \identityMatrix{n_r} 
\end{pmatrix} \right\}.
\end{align*}

Given representations $\pi_1,\dots,\pi_r$ of $\GL_{n_1}\left(\finiteField\right), \dots, \GL_{n_r}\left(\finiteField\right)$, respectively, we denote by $\pi_1 \parabolicTensor \dots \parabolicTensor \pi_r$ the inflation of $\pi_1 \otimes \dots \otimes \pi_r$ to $\parabolicGroup_{n_1,\dots,n_r}$. That is, $\pi_1 \parabolicTensor \dots \parabolicTensor \pi_r$ is a representation of $\parabolicGroup_{n_1,\dots,n_r}$, acting on the space of $\pi_1 \otimes \dots \otimes \pi_r$, and its action on pure tensors is given by $$\left(\pi_1 \parabolicTensor \dots \parabolicTensor \pi_r\right)\left(d u\right) v_1 \otimes \dots \otimes v_r = \pi_1\left(g_1\right)v_1 \otimes \dots \otimes \pi_r\left(g_r\right)v_r,$$
where $d = \diag\left(g_1,\dots,g_r\right) \in \leviGroup_{n_1,\dots,n_r}$ and $u \in \unipotentRadical_{n_1,\dots,n_r}$, and for every $1 \le j \le r$, $v_j \in \pi_j$.

The parabolic induction $\pi_1 \parabolicInduction \dots \parabolicInduction \pi_r$ is defined as the following representation of $\GL_{n_1+\dots+n_r}\left(\finiteField\right)$:

$$\pi_1 \parabolicInduction \dots \parabolicInduction \pi_r = \ind{\parabolicGroup_{n_1,\dots,n_r}}{\GL_{n_1+\dots+n_r}\left(\finiteField\right)}{\pi_1 \parabolicTensor \dots \parabolicTensor \pi_r}.$$

A representation $\pi$ of $\GL_n\left(\finiteField\right)$ is called \emph{cuspidal} if for every composition $\left(n_1,\dots,n_r\right) \ne \left(n\right)$ of $n$, there does not exist $0 \ne v \in \pi$ such that $v$ is invariant under $\unipotentRadical_{n_1,\dots,n_r}$, i.e., if $v \in \pi$ is such that $\pi\left(u\right)v = v$ for every $u \in \unipotentRadical_{n_1,\dots,n_r}$ then $v = 0$. If $\pi$ is irreducible, $\pi$ is cuspidal if and only if it is not a  subrepresentation of $\pi_1 \parabolicInduction \dots \parabolicInduction \pi_r$ for some $\pi_1, \dots, \pi_r$ as above, where $r > 1$.

By \cite[Theorem 2.4]{Gelfand70}, if $\pi$ is an irreducible representation of $\GL_n\left(\finiteField\right)$, then there exist $n_1,\dots,n_r > 0$ with $n_1 + \dots + n_r = n$ and irreducible cuspidal representations $\pi_1, \dots, \pi_r$, of $\GL_{n_1}\left(\finiteField\right), \dots, \GL_{n_r}\left(\finiteField\right)$, such that $\pi$ is isomorphic to a subrepresentation of the parabolic induction $\pi_1 \parabolicInduction \dots \parabolicInduction \pi_r$. Such $\pi_1, \dots, \pi_r$ are unique up to ordering. We define the \emph{cuspidal support of $\pi$} to be the multiset $\left\{ \pi_1, \dots, \pi_r \right\}$.

\subsection{Generic representations}

 Let $\fieldCharacter \colon \finiteField \rightarrow \multiplicativegroup{\cComplex}$ be a non-trivial additive character. Let $\unipotentSubgroup_n \le \GL_n\left(\finiteField\right)$ be the upper triangular unipotent subgroup. We define a character $\fieldCharacter \colon \unipotentSubgroup_n \rightarrow \multiplicativegroup{\cComplex}$ by the formula
$$ \fieldCharacter \begin{pmatrix}
1 & a_1 & \ast & \dots & \ast\\
& 1 & a_2 & \dots  & \ast \\
& & \ddots & \ddots  & \vdots \\
& & & 1 & a_{n-1} \\
& & & & 1
\end{pmatrix} = \psi\left( \sum_{k=1}^{n-1} a_k \right).$$

Let $\pi$ be a finite dimensional representation of $\GL_n\left(\finiteField\right)$. $\pi$ is said to be \emph{generic} if $$\Hom_{\unipotentSubgroup_n}\left(\representationRes{\unipotentSubgroup_n}\pi, \fieldCharacter \right) \ne 0.$$
This condition does not depend on the choice of $\fieldCharacter$. See \Cref{subsubsection:dependence-on-character}. We call a non-zero element in $\Hom_{\unipotentSubgroup_n}\left(\representationRes{\unipotentSubgroup_n}\pi, \fieldCharacter \right)$ a \emph{$\fieldCharacter$-Whittaker functional}. The representation $\pi$ is generic if and only if there exists $0 \ne v \in \pi$, such that $\pi \left(u
\right) v = \fieldCharacter\left(u\right) v$ for every $u \in \unipotentSubgroup_n$. We call such vector a \emph{Whittaker vector with respect to $\fieldCharacter$}, or a \emph{$\fieldCharacter$-Whittaker vector}. The dimension of the subspace spanned by the $\fieldCharacter$-Whittaker vectors of $\pi$ is $\dim \Hom_{\unipotentSubgroup_n}\left(\representationRes{\unipotentSubgroup_{n}} \pi, \fieldCharacter \right)$. 

\begin{definition}
	We say that \emph{$\pi$ is of Whittaker type} if $\pi$ is generic and the subspace spanned by its $\fieldCharacter$-Whittaker vectors is one-dimensional.
\end{definition}

By a well known result of Gelfand and Graev, we have that if $\pi$ is generic and irreducible, then it is of Whittaker type \cite[Theorem 0.5]{Gelfand70}, \cite[Corollary 5.6]{SilbergerZink00}. It is well known that irreducible cuspidal representations of $\GL_n\left(\finiteField\right)$ are generic \cite[Lemma 5.2]{SilbergerZink00}. The following result is also well known {\cite[Theorem 5.5]{SilbergerZink00}}.

\begin{theorem}\label{thm:unique-whittaker-vector-parabolic-induction}
	Let $\pi_1, \dots, \pi_r$ be representations of Whittaker type of $\GL_{n_1}\left(\finiteField\right), \dots, \GL_{n_r}\left(\finiteField\right)$, respectively. Then the parabolic induction $\pi_1 \parabolicInduction \dots \parabolicInduction \pi_r$ is a representation of Whittaker type.
\end{theorem}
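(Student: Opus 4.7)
Write $G = \GL_n(\finiteField)$ with $n = n_1 + \cdots + n_r$, $P = P_{n_1,\ldots,n_r} = \leviGroup \ltimes \unipotentRadical$ (shorthand $D,N$ for $\leviGroup_{n_1,\ldots,n_r}$ and $\unipotentRadical_{n_1,\ldots,n_r}$), and $\tau := \pi_1 \parabolicTensor \cdots \parabolicTensor \pi_r$, so that $\pi := \pi_1 \parabolicInduction \cdots \parabolicInduction \pi_r = \ind{P}{G}{\tau}$. The task is to show $\dim \Hom_{\unipotentSubgroup_n}(\pi|_{\unipotentSubgroup_n}, \fieldCharacter) = 1$. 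The plan is to reduce the computation to a sum over double cosets $P \backslash G / \unipotentSubgroup_n$ by combining Frobenius reciprocity with Mackey's restriction formula, and then to identify the unique double coset that contributes nontrivially.

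Two applications of Frobenius reciprocity together with Mackey's formula give
\[
\Hom_{\unipotentSubgroup_n}(\pi|_{\unipotentSubgroup_n}, \fieldCharacter) \;\cong\; \Hom_P\bigl(\tau,\, (\ind{\unipotentSubgroup_n}{G}{\fieldCharacter})|_P\bigr) \;\cong\; \bigoplus_{w \in P \backslash G / \unipotentSubgroup_n} \Hom_{P \cap w\unipotentSubgroup_nw^{-1}}(\tau,\, \fieldCharacter^w),
\]
with $\fieldCharacter^w(x) := \fieldCharacter(w^{-1}xw)$. Since $\tau$ is trivial on $N$, a summand vanishes whenever $\fieldCharacter^w$ is nontrivial on $N \cap w\unipotentSubgroup_nw^{-1}$. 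A root-system analysis on $\GL_n$ (tracking for each $w$ which simple root subgroups $U_{\alpha_i} \subset \unipotentSubgroup_n$ are conjugated into $N$) shows that this forces the underlying permutation $\sigma$ of $w$ to send the initial $n_r$ coordinates in increasing order onto the block corresponding to $\GL_{n_r}$ in $D$, the next $n_{r-1}$ coordinates in increasing order onto the block corresponding to $\GL_{n_{r-1}}$, and so on. These conditions single out the unique representative
\[
w_0 \;=\; \begin{pmatrix}
 & & & \identityMatrix{n_1} \\
 & & \identityMatrix{n_2} & \\
 & \iddots & & \\
\identityMatrix{n_r} & & &
\end{pmatrix}.
\]

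For $w = w_0$, one then checks directly that $P \cap w_0 \unipotentSubgroup_n w_0^{-1}$ is contained in $D$ and coincides with the block upper unipotent $\unipotentSubgroup_{n_1} \times \cdots \times \unipotentSubgroup_{n_r}$, while $\fieldCharacter^{w_0}$ restricts on this subgroup to the product $\fieldCharacter_{n_1} \otimes \cdots \otimes \fieldCharacter_{n_r}$ of standard Whittaker characters on each block. Consequently,
\[
\Hom_{P \cap w_0\unipotentSubgroup_nw_0^{-1}}(\tau,\, \fieldCharacter^{w_0}) \;\cong\; \bigotimes_{i=1}^r \Hom_{\unipotentSubgroup_{n_i}}(\pi_i|_{\unipotentSubgroup_{n_i}},\, \fieldCharacter_{n_i}),
\]
which is one-dimensional by the Whittaker type hypothesis on each factor $\pi_i$.

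The main obstacle will be the combinatorial vanishing argument isolating $w_0$: one must systematically verify that every other double coset representative conjugates at least one simple root subgroup $U_{\alpha_i} \subset \unipotentSubgroup_n$ into the unipotent radical $N$, thereby producing a nontrivial restriction of $\fieldCharacter^w$ to $N \cap w\unipotentSubgroup_nw^{-1}$. Once this classification is in place, identifying $P \cap w_0\unipotentSubgroup_nw_0^{-1}$ with $\unipotentSubgroup_{n_1} \times \cdots \times \unipotentSubgroup_{n_r}$ and checking the matching of the Whittaker characters are both direct calculations.
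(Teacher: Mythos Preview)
The paper does not prove this statement itself; it is quoted as a well-known result with a citation to \cite[Theorem~5.5]{SilbergerZink00}. Your approach via Frobenius reciprocity and Mackey decomposition is the standard argument and is correct: the vanishing criterion you state (that $\fieldCharacter^w$ be trivial on $N \cap w\unipotentSubgroup_n w^{-1}$) amounts to requiring that no simple root $\alpha_i$ satisfy $w\alpha_i \in \Phi(N)$, which forces the block index of $w(i)$ to be weakly decreasing in $i$ and hence singles out the single coset $W_P\, w_0$; for $w_0$ (which is the paper's $\weylnm_{n_r,\ldots,n_1}$, consistent with the explicit Whittaker vector constructed later in Section~\ref{subsection:multiplicativity-of-gamma-factors}) one indeed has $P \cap w_0 \unipotentSubgroup_n w_0^{-1} = \unipotentSubgroup_{n_1} \times \cdots \times \unipotentSubgroup_{n_r}$ with $\fieldCharacter^{w_0}$ restricting to the product of the block Whittaker characters.
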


\subsubsection{Whittaker models and Bessel functions}\label{section:whittaker-models-and-bessel-functions}

Let $\pi$ be an irreducible generic representation of $\GL_n\left(\finiteField\right)$. Since $\pi$ is of Whittaker type, Frobenius reciprocity implies that
$$\dim \Hom_{\GL_n\left(\finiteField\right)}\left(\pi, \ind{\unipotentSubgroup_n}{\GL_n\left(\finiteField\right)}{\fieldCharacter}\right) = 1.$$
We denote by $\whittaker\left(\pi, \fieldCharacter\right)$ the unique subspace of $\ind{\unipotentSubgroup_{n}}{\GL_n\left(\finiteField\right)}{\fieldCharacter}$ that is isomorphic to $\pi$. This is \emph{the Whittaker model of $\pi$ with respect to $\fieldCharacter$}.

Recall that for an irreducible representation $\pi$ of $\GL_n\left(\finiteField\right)$, we have that its contragredient $\contragredient{\pi}$ is isomorphic to $\outerInvolution{\pi}$, where $\outerInvolution{\pi}$ is the representation acting on the space of $\pi$ by $\outerInvolution{\pi}\left(g\right) = \pi\left(\outerInvolution{g}\right)$, where for $g \in \GL_n\left(\finiteField\right)$, $$\outerInvolution{g} = \transpose{\left(g^{-1}\right)}.$$ 
(This follows from the fact that for $g \in \GL_n\left(\finiteField\right)$, the trace characters of $\pi$ and $\contragredient{\pi}$ are related by $\trace \contragredient{\pi}\left(g\right) = \trace \pi (g^{-1})$, and from the fact that $g^{-1}$ and $\transpose{\left(g^{-1}\right)}$ are conjugate.)

Using the isomorphism $\contragredient{\pi} \cong \outerInvolution{\pi}$, we get an isomorphism of vector spaces $\whittaker\left(\pi,\fieldCharacter\right) \rightarrow \whittaker\left(\contragredient{\pi}, \fieldCharacter^{-1}\right)$, given by $W \mapsto \tilde{W}$, where
$$ \tilde{W}\left(g\right) = W\left(\weyllong_n \outerInvolution{g}\right),$$ and where $\weyllong_n \in \GL_n\left(\finiteField\right)$ is the long Weyl element $$\weyllong_n = \begin{pmatrix}
	& & & 1\\
	& & 1\\
	& \iddots\\
	1
\end{pmatrix}.$$

Under the realization of $\pi$ by its Whittaker  model $\whittaker\left(\pi, \fieldCharacter\right)$, the one-dimensional subspace spanned by the $\fieldCharacter$-Whittaker vectors of $\pi$ is realized as the one-dimensional subspace of $\whittaker\left(\pi,\fieldCharacter\right)$ consisting of functions $W \in \whittaker\left(\pi,\fieldCharacter\right)$, such that $W\left(gu\right) = \fieldCharacter\left(u\right) W\left(g\right)$, for every $u \in \unipotentSubgroup_n$ and every $g \in \GL_n\left(\finiteField\right)$. By \cite[Proposition 4.5]{Gelfand70}, there exists a (unique) element $W$ in this one-dimensional subspace such that $W\left(\identityMatrix{n}\right) = 1$. We call this $W$ the \emph{normalized Bessel function of $\pi$ with respect to $\fieldCharacter$}, and denote it by $\repBesselFunction{\pi}$. To summarize, the Bessel function $\repBesselFunction{\pi}$ is the unique element in $\whittaker\left(\pi,\fieldCharacter\right)$, such that
\begin{enumerate}
	\item $\repBesselFunction{\pi}\left(\identityMatrix{n}\right) = 1$.
	\item $\repBesselFunction{\pi}\left(g u\right) = \fieldCharacter\left(u\right) \repBesselFunction{\pi}\left(g
	\right)$, for every $g \in \GL_n\left(\finiteField\right)$ and $u \in \unipotentSubgroup_n$.
\end{enumerate}

The Bessel function enjoys the following identities that relate it to its complex conjugate and to its contragredient \cite[Propositions 2.15 and 3.5]{Nien14}.

\begin{proposition}\label{prop:contragredient-complex-conjugate}For any irreducible generic representation $\pi$ of $\GL_n\left(\finiteField\right)$ and any $g \in \GL_n\left(\finiteField\right)$, we have the following identities:
	\begin{enumerate}
		\item $\repBesselFunction{\pi}\left(g^{-1}\right) = \conjugate{\repBesselFunction{\pi}\left(g\right)}$.
		\item $\repBesselFunction{\pi}\left(g^{-1}\right) = {\grepBesselFunction{\contragredient{\pi}}{\fieldCharacter^{-1}}\left(g\right)}$.
	\end{enumerate}
\end{proposition}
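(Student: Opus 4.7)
The plan is to realize the normalized Bessel function as a matrix coefficient and then extract both identities from the symmetries of the relevant pairings. In both parts the essential tool is uniqueness: since the $\fieldCharacter$-Whittaker line in $\pi$ is one-dimensional, any function in $\whittaker(\pi,\fieldCharacter)$ that transforms as a right $\fieldCharacter$-Whittaker vector and takes the value $1$ at $\identityMatrix{n}$ must equal $\repBesselFunction{\pi}$, and similarly for $\contragredient{\pi}$.

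For part (1), since $\GL_n(\finiteField)$ is finite, $\pi$ carries a $\GL_n(\finiteField)$-invariant positive-definite Hermitian inner product $(\cdot,\cdot)$, conjugate-linear in its second argument. Let $v \in \pi$ be a nonzero $\fieldCharacter$-Whittaker vector and set $\ell(w) = (w,v)/(v,v)$. Using $\pi(u^{-1})v = \fieldCharacter(u)^{-1}v$ together with $\overline{\fieldCharacter(u)^{-1}} = \fieldCharacter(u)$, a short computation gives $\ell(\pi(u)w) = \fieldCharacter(u)\ell(w)$, so $\ell$ is a $\fieldCharacter$-Whittaker functional with $\ell(v) = 1$. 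The function $W(g) := \ell(\pi(g)v)$ then lies in $\whittaker(\pi,\fieldCharacter)$, is a right $\fieldCharacter$-Whittaker vector, and satisfies $W(\identityMatrix{n}) = 1$, so $W = \repBesselFunction{\pi}$ by uniqueness. Hence
$$
\repBesselFunction{\pi}(g) = \frac{(\pi(g)v,v)}{(v,v)},
$$
and substituting $g^{-1}$, using invariance in the form $(\pi(g^{-1})v,v) = (v,\pi(g)v)$ together with Hermitian symmetry, yields $\repBesselFunction{\pi}(g^{-1}) = \conjugate{\repBesselFunction{\pi}(g)}$.

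For part (2), the plan is to use the canonical pairing between $\pi$ and $\contragredient{\pi}$. Viewing the functional $\ell$ above as an element $\xi \in \contragredient{\pi}$, the identity $(\contragredient{\pi}(u)\xi)(w) = \xi(\pi(u^{-1})w) = \fieldCharacter(u^{-1})\xi(w)$ shows that $\xi$ is a $\fieldCharacter^{-1}$-Whittaker vector of $\contragredient{\pi}$. Running the same logic in reverse, any $\fieldCharacter^{-1}$-Whittaker functional $\tilde\ell$ on $\contragredient{\pi}$ is given by evaluation at a $\fieldCharacter$-Whittaker vector of $\pi$; the normalization $\tilde\ell(\xi) = 1$, combined with $\xi(v) = \ell(v) = 1$ and the one-dimensionality of the $\fieldCharacter$-Whittaker line in $\pi$, forces $\tilde\ell$ to be evaluation at $v$. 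Then
$$
\grepBesselFunction{\contragredient{\pi}}{\fieldCharacter^{-1}}(g) = \tilde\ell\bigl(\contragredient{\pi}(g)\xi\bigr) = \bigl(\contragredient{\pi}(g)\xi\bigr)(v) = \xi\bigl(\pi(g^{-1})v\bigr) = \ell\bigl(\pi(g^{-1})v\bigr) = \repBesselFunction{\pi}(g^{-1}),
$$
completing part (2).

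I do not anticipate any serious obstacle; the whole argument is pairing-theoretic and the remaining work is notational bookkeeping. The delicate point is keeping the two avatars of the same Whittaker datum distinct --- the functional $\ell$ on $\pi$ versus the vector $\xi$ in $\contragredient{\pi}$ --- and verifying in each incarnation that the function one writes down is a right Whittaker vector normalized to $1$ at the identity, so that the uniqueness input pins it down as the normalized Bessel function.
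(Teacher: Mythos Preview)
Your proof is correct. The paper does not give its own proof of this proposition but cites \cite[Propositions 2.15 and 3.5]{Nien14}; however, the subsequent \Cref{rem:inner-product-and-whittaker-vector} sketches exactly your inner-product realization $\repBesselFunction{\pi}(g) = \innerproduct{\pi(g)\whittakerVector{\pi}}{\whittakerVector{\pi}}_\pi / \innerproduct{\whittakerVector{\pi}}{\whittakerVector{\pi}}_\pi$ and remarks that ``all of the properties of the Bessel function listed above now follow immediately,'' so your argument is precisely the one the paper has in mind.
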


\begin{remark}\label{rem:inner-product-and-whittaker-vector}
	Let $\whittakerVector{\pi}$ be a non-zero $\fieldCharacter$-Whittaker vector. If we choose an inner product $\innerproduct{\cdot}{\cdot}_\pi$ on $\pi$ which is invariant under the $\GL_n\left(\finiteField\right)$-action, we have that the assignment $\whittakerFunctional{\pi} \colon \pi \rightarrow \cComplex$ given by $v_\pi \mapsto \innerproduct{v_\pi}{\whittakerVector{\pi}}_\pi$ defines a Whittaker functional. The Whittaker model of $\pi$ can be described using Frobenius reciprocity as $\whittaker\left(\pi,\fieldCharacter\right) = \left\{ W_{v_\pi} \mid v_\pi \in \pi \right\}$, where for $g \in \GL_n\left(\finiteField\right)$ and $v_\pi \in \pi$, we define $W_{v_\pi}\left(g\right) = \innerproduct{\pi\left(g\right)v_\pi}{\whittakerVector{\pi}}_\pi$. The Bessel function is given by $$\repBesselFunction{\pi}\left(g\right) = \frac{\innerproduct{\pi\left(g\right)\whittakerVector{\pi}}{\whittakerVector{\pi}}_\pi}{\innerproduct{\whittakerVector{\pi}}{\whittakerVector{\pi}}_\pi}.$$
	All of the properties of the Bessel function listed above now follow immediately from the fact that $\innerproduct{\cdot}{\cdot}_\pi$ is an inner product, and that $\whittakerVector{\pi}$ is a $\fieldCharacter$-Whittaker vector. Moreover, the projection operator to the one-dimensional subspace spanned by the $\fieldCharacter$-Whittaker vectors $\operatorname{pr}_{\cComplex \whittakerVector{\pi}}$ can be described in two ways. The first way is by using the inner product, in which case for $v_\pi \in \pi$, $$\operatorname{pr}_{\cComplex \whittakerVector{\pi}}\left(v_\pi\right) = \frac{\innerproduct{v_\pi}{\whittakerVector{\pi}}_\pi}{\innerproduct{\whittakerVector{\pi}}{\whittakerVector{\pi}}_\pi} \whittakerVector{\pi}.$$ The second way is by averaging, in which case $$\operatorname{pr}_{\cComplex \whittakerVector{\pi}}\left(v_\pi\right) = \frac{1}{\sizeof{\unipotentSubgroup_n}} \sum_{u \in \unipotentSubgroup_n} \fieldCharacter^{-1}\left(u\right) \pi\left(u\right) v_\pi.$$ By completing $\whittakerVector{\pi}$ to an orthogonal basis of $\pi$ and using the fact that the subspace spanned by the $\fieldCharacter$-Whittaker vectors is one dimensional, we see that $$\trace \left(\operatorname{pr}_{\cComplex \whittakerVector{\pi}} \circ \pi\left(g\right)\right) = \repBesselFunction{\pi}\left(g\right).$$
	This is \cite[Proposition 4.5]{Gelfand70}.
\end{remark}

\subsection{Jacquet--Piatetski-Shapiro--Shalika gamma factors}\label{section:rankin-selberg-gamma-factors}

Let $\pi$ and $\sigma$ be irreducible generic representations of $\GL_n\left(\finiteField\right)$ and $\GL_m\left(\finiteField\right)$, respectively. For most $\pi$ and $\sigma$, one can define a constant attached to $\pi$ and $\sigma$ called the \emph{Jacquet--Piatetski-Shapiro--Shalika gamma factor of $\pi$ and $\sigma$}. It is also known as the \emph{Rankin--Selberg gamma factor of $\pi$ and $\sigma$}. This is a finite field analog of the definition given by Jacquet--Piatetski-Shapiro--Shalika \cite{Jacquet1983rankin} for $p$-adic groups. These were explained in Piatetski-Shapiro's lectures in 1976 and studied in an unpublished note from 1979 by the first author \cite{soudry1979}. The case $n > m$ was also studied in Roddity-Gershon's master's thesis under the supervision of the first author.

\subsubsection{The case $n > m$}

In her master's thesis \cite{Roditty10}, Edva Roditty-Gershon defined the Jacquet--Piatetski-Shapiro--Shalika gamma factor $\rsGammaFactor{\pi}{\sigma}$, under the assumption that $\pi$ is cuspidal and that $n > m$. Roddity-Gershon's thesis is unpublished, but her main results are presented by Nien in \cite{Nien14}. We briefly review these results now.

The first result is a functional equation that defines the Jacquet--Piatetski-Shapiro--Shalika gamma factor.
Suppose that $n > m$ and that $\pi$ is cuspidal. For any $W \in \whittaker\left(\pi,\fieldCharacter\right)$ and $W' \in \whittaker\left(\sigma,\fieldCharacter^{-1}\right)$, and any $0 \le  j \le n - m - 1$, we define
\begin{equation*}
	Z_j\left(W,W';\fieldCharacter\right) = \sum_{h \in \unipotentSubgroup_m \backslash \GL_m \left(\finiteField\right)} \sum_{x \in \Mat{\left(n-m-j-1\right)}{m}\left(\finiteField\right)} W\begin{pmatrix}
		h & &\\
		x & \identityMatrix{n-m-j-1}\\
		& & \identityMatrix{j+1}
	\end{pmatrix} W'\left(h\right).	\end{equation*}
We are now ready to state the functional equation.
\begin{theorem}[{\cite[Theorem 2.10]{Nien14}}]\label{thm:rankin-selberg-m-smaller-than-n-gamma-factor-definition}\label{thm:functional-equation-n-greater-than-m}
	 There exists a non-zero constant $\rsGammaFactor{\pi}{\sigma} \in \cComplex$, such that for every $0 \le j \le n - m - 1$, every $W \in \whittaker\left(\pi, \fieldCharacter\right)$ and every $W' \in \whittaker\left(\sigma, \fieldCharacter^{-1}\right)$, we have
	 
	 $$ q^{mj} \rsGammaFactor{\pi}{\sigma} Z_j\left(W, W' ; \fieldCharacter\right) = Z_{n-m-j-1} \left(\contragredient{\pi} \begin{pmatrix}
	 	\identityMatrix{m} &\\
	 	& \weyllong_{n-m}
	 \end{pmatrix} \tilde{W}, \tilde{W}'; \fieldCharacter^{-1} \right),$$
 where $\weyllong_{n-m} \in \GL_{n-m}\left(\finiteField\right)$ is the long Weyl element.
\end{theorem}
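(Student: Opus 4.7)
My plan is to follow the standard Rankin--Selberg uniqueness argument, adapted to the finite-field setting. For each fixed $0 \le j \le n-m-1$, both sides of the proposed identity are bilinear forms on $\whittaker\left(\pi, \fieldCharacter\right) \times \whittaker\left(\sigma, \fieldCharacter^{-1}\right)$. First I would verify that each side is invariant under the ``diagonal'' action of $\GL_m\left(\finiteField\right)$, in which $h_0$ acts on $W$ by right-translation via $\diag\left(h_0, \identityMatrix{n-m}\right) \in \GL_n\left(\finiteField\right)$ and on $W'$ by $\sigma\left(h_0\right)$. For the left-hand side this is the substitution $h \mapsto h h_0$ and $x \mapsto x h_0$ in the two sums; for the right-hand side the embedding lies in a different block after conjugation by $\weyllong_n$, but the same change-of-variables argument applies. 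Moreover, each side transforms by $\fieldCharacter$ under the unipotent radical $\unipotentRadical_{m, n-m}$ acting on $W$.

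Second, I would appeal to a multiplicity-one statement. Cuspidality of $\pi$ together with the strict inequality $n > m$ implies that $\representationRes{\mirabolic_n}\pi$ is a multiple of the Gelfand--Graev representation $\ind{\unipotentSubgroup_n}{\mirabolic_n}{\fieldCharacter}$; combined with Frobenius reciprocity and uniqueness of the $\fieldCharacter^{-1}$-Whittaker functional on $\sigma$, this forces the space of bilinear forms with the equivariance of the previous paragraph to be one-dimensional. Consequently, for each $j$ there exists a scalar $c_j \in \cComplex$ with
$$c_j \cdot Z_j\left(W, W'; \fieldCharacter\right) = Z_{n-m-j-1}\left(\contragredient{\pi}\begin{pmatrix} \identityMatrix{m} & \\ & \weyllong_{n-m}\end{pmatrix}\tilde{W}, \tilde{W}'; \fieldCharacter^{-1}\right)$$
for all $W, W'$. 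Non-vanishing of $c_j$ is ensured by choosing a normalized Bessel function and a test vector for which the left-hand sum is visibly non-zero.

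Finally, I would show $c_j = q^{mj}\cdot \rsGammaFactor{\pi}{\sigma}$ for a $j$-independent constant $\rsGammaFactor{\pi}{\sigma}$. The key computation relates $Z_{j+1}$ to $Z_j$ via an averaging identity: shifting $W$ by an additional row-vector in $\Mat{1}{m}\left(\finiteField\right)$ and summing produces a factor of $q^m$ while collapsing one column of the $x$-summation. Applying the analogous manipulation on the dual side forces $c_{j+1} = q^{m} c_j$, matching exactly the $q^{mj}$ factor in the statement and defining $\rsGammaFactor{\pi}{\sigma}$ unambiguously. The main obstacle I expect is the multiplicity-one claim in the second paragraph; verifying it rigorously requires a careful Mackey-theoretic analysis of the restriction of a cuspidal representation to a subgroup of the form $\GL_m\left(\finiteField\right) \ltimes \unipotentRadical_{m, n-m}$, and it is here that both cuspidality of $\pi$ and the strict inequality $n > m$ are essential (without either, proper Jacquet modules of $\pi$ would contribute additional equivariant pairings). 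Once uniqueness is established, the remaining identities reduce to routine changes of variables.
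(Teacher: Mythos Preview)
The paper does not give its own proof of this theorem: it is stated with a citation to \cite[Theorem 2.10]{Nien14} (ultimately going back to Roditty-Gershon's thesis), and no argument is supplied in the present text. So there is nothing to compare your proposal against directly.

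That said, your outline is the standard Rankin--Selberg strategy and is essentially what appears in the cited sources. A few remarks. First, the equivariance you actually need is under the diagonal $\GL_m\left(\finiteField\right)$ embedded via $h_0 \mapsto \diag\left(h_0,\identityMatrix{n-m}\right)$; the claim that ``each side transforms by $\fieldCharacter$ under the unipotent radical $\unipotentRadical_{m,n-m}$ acting on $W$'' is not the right formulation, since both sides are in fact invariant (not $\fieldCharacter$-equivariant) under the lower-unipotent pieces that enter through the $x$-integration, and the relevant $\fieldCharacter$-equivariance is already encoded in the Whittaker model. Second, your statement that $\representationRes{\mirabolic_n}\pi$ is ``a multiple of'' $\ind{\unipotentSubgroup_n}{\mirabolic_n}{\fieldCharacter}$ understates what cuspidality gives you: by \cite[Theorem 2.2]{Gelfand70} the restriction is irreducible and isomorphic to this induced representation, and it is this precise identification (not merely containment with multiplicity) that yields the one-dimensionality of the relevant $\Hom$-space after Frobenius reciprocity. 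Third, the recursion $c_{j+1} = q^m c_j$ is indeed the mechanism that produces the $q^{mj}$ factor; in the finite-field setting the ``additional row'' argument you sketch works because the $\fieldCharacter$-average over the extra row of $x$ is non-degenerate, which again uses the Whittaker property. With these adjustments your plan is correct.
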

The second result expresses the gamma factor $\rsGammaFactor{\pi}{\sigma}$ in terms of the Bessel functions of $\pi$ and $\sigma$.
\begin{proposition}[{\cite[Proposition 2.16]{Nien14}}]\label{prop:rankin-selberg-gamma-factor-in-terms-of-bessel-functions}Under the assumptions above, we have
	$$\rsGammaFactor{\pi}{\sigma} = \sum_{h \in \unipotentSubgroup_m \backslash \GL_m\left(\finiteField\right)} \repBesselFunction{\pi}\begin{pmatrix}
		& \identityMatrix{n-m}\\
		h
	\end{pmatrix} \grepBesselFunction{\sigma}{\fieldCharacter^{-1}}\left(h\right).$$
\end{proposition}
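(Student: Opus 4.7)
The plan is to apply the functional equation of \Cref{thm:rankin-selberg-m-smaller-than-n-gamma-factor-definition} to $W = \repBesselFunction{\pi}$ and $W' = \grepBesselFunction{\sigma}{\fieldCharacter^{-1}}$ with $j = 0$; the formula will then drop out after evaluating both sides.

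First I would unwind the right-hand side. Since $n - m - j - 1 = n - m - 1$, the inner sum over $x \in \Mat{0}{m}$ is trivial, leaving
$$\sum_{h \in \unipotentSubgroup_m \backslash \GL_m\left(\finiteField\right)} \tilde{\repBesselFunction{\pi}}\begin{pmatrix} h & \\ & \weyllong_{n-m}\end{pmatrix} \tilde{\grepBesselFunction{\sigma}{\fieldCharacter^{-1}}}(h).$$
Expanding $\tilde{W}(g) = W(\weyllong_n \outerInvolution{g})$ and using the block-matrix identity
$$\weyllong_n \begin{pmatrix} \transpose{h^{-1}} & \\ & \weyllong_{n-m}\end{pmatrix} = \begin{pmatrix} & \identityMatrix{n-m}\\ \weyllong_m \transpose{h^{-1}} & \end{pmatrix},$$
together with the bijection $h \leftrightarrow \weyllong_m \transpose{h^{-1}}$ of $\unipotentSubgroup_m \backslash \GL_m\left(\finiteField\right)$, this becomes exactly the right-hand side of the claim.

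Next I would show that $Z_0(\repBesselFunction{\pi}, \grepBesselFunction{\sigma}{\fieldCharacter^{-1}}; \fieldCharacter) = 1$, so that the left-hand side of the functional equation reduces to $\rsGammaFactor{\pi}{\sigma}$. This relies on the support of the Bessel function of a cuspidal representation: $\repBesselFunction{\pi}$ is supported on a union of cells $\unipotentSubgroup_n \cdot w \cdot T \cdot \unipotentSubgroup_n$ indexed by block-antidiagonal Weyl elements $w$ attached to compositions of $n$ with equal parts. Writing such a $w$ as a permutation $\tau$, the last row of $u_1 w t u_2$ has value $t_{\tau(n)}$ at column $\tau(n)$ and zeros before it. Since the matrix $\begin{pmatrix} h & 0 & 0\\ x & \identityMatrix{n-m-1} & 0\\ 0 & 0 & 1\end{pmatrix}$ has last row $(0, \ldots, 0, 1)$, we must have $\tau(n) = n$, which among block-antidiagonal elements forces $w = \identityMatrix{n}$, and then $t_n = 1$. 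In this Borel cell $\repBesselFunction{\pi}$ vanishes on non-scalar tori by a standard Whittaker-invariance argument, so in fact $t = \identityMatrix{n}$ and the matrix lies in $\unipotentSubgroup_n$. This forces the representative $h = \identityMatrix{m}$ and $x = 0$, and the sole surviving term is $\repBesselFunction{\pi}(\identityMatrix{n}) \grepBesselFunction{\sigma}{\fieldCharacter^{-1}}(\identityMatrix{m}) = 1$.

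The main obstacle is the support analysis above, which rests on the classical Gelfand--Graev description of cuspidal Bessel functions. Once granted, combining the two computations with the functional equation immediately yields $\rsGammaFactor{\pi}{\sigma} = \sum_h \repBesselFunction{\pi}\begin{pmatrix} & \identityMatrix{n-m}\\ h & \end{pmatrix} \grepBesselFunction{\sigma}{\fieldCharacter^{-1}}(h)$.
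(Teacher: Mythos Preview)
The paper does not supply its own proof of this proposition; it is quoted from \cite[Proposition 2.16]{Nien14}. Your argument is correct and is the standard one: take $j = 0$ in \Cref{thm:rankin-selberg-m-smaller-than-n-gamma-factor-definition}, so that on the dual side $Z_{n-m-1}$ has no $x$-summation and unwinds via $\tilde{W}(g) = W(\weyllong_n \outerInvolution{g})$ and the substitution $h \mapsto \weyllong_m \transpose{h^{-1}}$ to the claimed Bessel sum, while on the near side $Z_0(\repBesselFunction{\pi}, \grepBesselFunction{\sigma}{\fieldCharacter^{-1}}; \fieldCharacter) = 1$.

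One remark on the support step. Your route through the Gelfand--Graev description of the cuspidal Bessel support (block-antidiagonal cells, then forcing $w = \identityMatrix{n}$ and the torus element to be scalar) works, but is heavier than necessary here. The matrix
\[
\begin{pmatrix} h & & \\ x & \identityMatrix{n-m-1} & \\ & & 1 \end{pmatrix}
\]
already lies in the mirabolic subgroup $\mirabolic_n$, and by \cite[Lemma 2.14]{Nien14} (invoked elsewhere in this paper) the Bessel function of any irreducible generic representation vanishes on $\mirabolic_n \setminus \unipotentSubgroup_n$. This immediately forces $x = 0$ and the representative $h = \identityMatrix{m}$, yielding $Z_0 = 1$ without any Bruhat-cell analysis.
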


It follows from \Cref{prop:rankin-selberg-gamma-factor-in-terms-of-bessel-functions} and \Cref{prop:contragredient-complex-conjugate} that $$\conjugate{\rsGammaFactor{\pi}{\sigma}} = \rsGammaFactor[\fieldCharacter^{-1}]{\contragredient{\pi}}{\contragredient{\sigma}}.$$

Moreover, applying \Cref{thm:rankin-selberg-m-smaller-than-n-gamma-factor-definition} twice, we get the following corollary regarding the absolute value of $\rsGammaFactor{\pi}{\sigma}$.

\begin{corollary}\label{prop:size-of-rs-gamma-factor-m-smaller-than-n}We have that
	 $$\rsGammaFactor{\pi}{\sigma}\rsGammaFactor[\fieldCharacter^{-1}]{\contragredient{\pi}}{\contragredient{\sigma}} = q^{-m \left(n - m - 1\right)},$$
	and therefore $$\abs{\rsGammaFactor{\pi}{\sigma}} = q^{-\frac{m \left(n - m - 1\right)}{2}}.$$
\end{corollary}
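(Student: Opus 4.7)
The plan is to iterate the functional equation of \Cref{thm:functional-equation-n-greater-than-m} twice and combine the two outputs. Fix $0 \le j \le n - m - 1$ and take $W \in \whittaker(\pi,\fieldCharacter)$, $W' \in \whittaker(\sigma,\fieldCharacter^{-1})$. A first application gives
$$q^{mj}\,\rsGammaFactor{\pi}{\sigma}\,Z_j(W,W';\fieldCharacter) \;=\; Z_{n-m-j-1}\!\left(\contragredient{\pi}(h)\,\tilde W,\;\tilde W';\fieldCharacter^{-1}\right),$$
where $h = \begin{pmatrix} \identityMatrix{m} & \\ & \weyllong_{n-m} \end{pmatrix}$. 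Now apply the same theorem a second time, with $\pi$ replaced by $\contragredient{\pi}$, $\sigma$ by $\contragredient{\sigma}$, the character replaced by $\fieldCharacter^{-1}$, and the index replaced by $j' := n - m - j - 1$, so that $n - m - j' - 1 = j$.

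The key algebraic point is that the tilde operation is an involution (under the canonical identification $\contragredient{\contragredient{\pi}} \cong \pi$) and that the ``twist by $h$'' on the right-hand side is compatible with tildeing in exactly the right way. Using $\tilde W(g) = W(\weyllong_n \outerInvolution{g})$ together with the identities $\outerInvolution{(AB)} = \outerInvolution{A}\,\outerInvolution{B}$, $\outerInvolution{\weyllong_n} = \weyllong_n$, $\outerInvolution{h} = h$ (both $\weyllong_n$ and $h$ are symmetric involutions), and $\weyllong_n^2 = h^2 = \identityMatrix{n}$, a direct calculation shows $\widetilde{\contragredient{\pi}(h)\tilde W} = \pi(h)\,W$ and $\widetilde{\tilde W'} = W'$. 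Thus the two $h$-twists on the right-hand side of the second functional equation compose to $\pi(h^2) = \identityMap$, and that side collapses to $Z_j(W,W';\fieldCharacter)$. The second application therefore reads
$$q^{m(n-m-j-1)}\,\rsGammaFactor[\fieldCharacter^{-1}]{\contragredient{\pi}}{\contragredient{\sigma}}\,Z_{n-m-j-1}\!\left(\contragredient{\pi}(h)\tilde W,\;\tilde W';\fieldCharacter^{-1}\right) \;=\; Z_j(W,W';\fieldCharacter).$$

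Substituting the first equation into the second gives
$$q^{m(n-m-1)}\,\rsGammaFactor{\pi}{\sigma}\,\rsGammaFactor[\fieldCharacter^{-1}]{\contragredient{\pi}}{\contragredient{\sigma}}\,Z_j(W,W';\fieldCharacter) \;=\; Z_j(W,W';\fieldCharacter).$$
Since $\rsGammaFactor{\pi}{\sigma}$ is well-defined and non-zero (so the functional equation is non-trivial), there must exist $W$, $W'$, $j$ with $Z_j(W,W';\fieldCharacter) \ne 0$; dividing yields the product formula $\rsGammaFactor{\pi}{\sigma}\,\rsGammaFactor[\fieldCharacter^{-1}]{\contragredient{\pi}}{\contragredient{\sigma}} = q^{-m(n-m-1)}$. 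Combining this with the identity $\conjugate{\rsGammaFactor{\pi}{\sigma}} = \rsGammaFactor[\fieldCharacter^{-1}]{\contragredient{\pi}}{\contragredient{\sigma}}$ recorded immediately before the corollary gives $\abs{\rsGammaFactor{\pi}{\sigma}}^2 = q^{-m(n-m-1)}$, from which the absolute value statement follows.

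The main obstacle is the matrix bookkeeping in the second application of the functional equation: one must carefully verify $\widetilde{\contragredient{\pi}(h)\tilde W} = \pi(h) W$, which in turn relies on $\outerInvolution{(AB)} = \outerInvolution{A}\outerInvolution{B}$ (rather than the reversed order one might naively expect from transposition) combined with the fact that $\weyllong_n$ and $h$ are self-inverse and fixed by $\outerInvolution{(\cdot)}$. Once this identity is in hand, the two $h$-twists collapse via $h^2 = \identityMatrix{n}$, and the calculation reduces to algebra.
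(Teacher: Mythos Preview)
Your proof is correct and follows exactly the approach indicated in the paper, which simply says ``applying \Cref{thm:rankin-selberg-m-smaller-than-n-gamma-factor-definition} twice.'' You have supplied the bookkeeping details that the paper omits, in particular the verification that $\widetilde{\contragredient{\pi}(h)\tilde W} = \pi(h)W$ so that the second application collapses back to $Z_j(W,W';\fieldCharacter)$.
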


\subsubsection{The case $n = m$}
The case $n = m$ was discussed in Piatetski-Shapiro's lecture and is explained briefly in Rongqing Ye's work \cite{Ye18}.

Let $\SchwartzSpacen$ be the space of functions $\phi \colon \finiteField^n \rightarrow \cComplex$. For a function $\phi \in \SchwartzSpacen$, we define its Fourier transform $\fourierTransform{\fieldCharacter}{\phi} \colon \finiteField^n \rightarrow \cComplex$ by the formula $$\fourierTransform{\fieldCharacter}{\phi}\left(y\right) = \sum_{x \in \finiteField^n} \phi\left(x\right) \fieldCharacter\left(\bilinearPairing{x}{y}\right),$$ where if $x = \left(x_1,\dots,x_n\right) \in \finiteField^n$ and $y = \left(y_1,\dots,y_n\right) \in \finiteField^n$, then $\bilinearPairing{x}{y}$ is the standard pairing $$\bilinearPairing{x}{y} = \sum_{i=1}^n x_i y_i.$$

Let $\pi$ and $\sigma$ be irreducible cuspidal representations of $\GL_n\left(\finiteField\right)$. We define for any $W \in \whittaker\left(\pi,\fieldCharacter\right)$, $W' \in \whittaker\left(\sigma, \fieldCharacter^{-1}\right)$ and any $\phi \in \SchwartzSpacen$

$$Z\left(W,W',\phi;\fieldCharacter\right) = \sum_{g \in \unipotentSubgroup_n \backslash \GL_n\left(\finiteField\right)} W\left(g\right) W'\left(g\right) \phi\left(e_n g\right),$$
where $e_n = \left(0,\dots,0,1\right) \in \finiteField^n$. We are now ready to introduce the functional equation that defines $\rsGammaFactor{\pi}{\sigma}$.

\begin{theorem}[{\cite[Theorem 2.3]{Ye18}}]\label{thm:rankin-selberg-m-equals-n-gamma-factor-definition}
	There exists a non-zero constant $\rsGammaFactor{\pi}{\sigma}$, such that for any $W \in \whittaker\left(\pi,\fieldCharacter\right)$, $W' \in \whittaker\left(\sigma, \fieldCharacter^{-1}\right)$, and any $\phi \in \SchwartzSpacen$ with $\phi\left(0\right) = 0$, we have
	$$ Z(\tilde{W}, \tilde{W}', \fourierTransform{\fieldCharacter}{\phi}; \fieldCharacter^{-1}) = \rsGammaFactor{\pi}{\sigma} Z\left(W,W',\phi;\fieldCharacter\right).$$
\end{theorem}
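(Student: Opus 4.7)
The plan is to exhibit both sides of the functional equation as elements of a one-dimensional space of $\GL_n\left(\finiteField\right)$-invariant trilinear forms on $\pi \otimes \sigma \otimes \SchwartzSpacen'$, where $\SchwartzSpacen' = \left\{\phi \in \SchwartzSpacen : \phi\left(0\right) = 0\right\}$. This will force proportionality and define $\rsGammaFactor{\pi}{\sigma}$.

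First, I would verify that $Z\left(W, W', \phi; \fieldCharacter\right)$ is invariant under the diagonal $\GL_n\left(\finiteField\right)$-action, with $\GL_n\left(\finiteField\right)$ acting on $\SchwartzSpacen$ by the right regular representation $\rho\left(g_0\right)\phi\left(x\right) = \phi\left(x g_0\right)$. This is immediate from the change of variables $g \mapsto g g_0^{-1}$ in the defining sum, and since $e_n g \ne 0$ for $g \in \GL_n\left(\finiteField\right)$, the form descends to $\SchwartzSpacen'$. I would then establish the analogous invariance of $Z\bigl(\tilde{W}, \tilde{W}', \fourierTransform{\fieldCharacter}{\phi}; \fieldCharacter^{-1}\bigr)$: the map $W \mapsto \tilde{W}$ intertwines $\pi\left(g_0\right)$ on $\whittaker\left(\pi, \fieldCharacter\right)$ with $\contragredient{\pi}\bigl(\outerInvolution{g_0}\bigr)$ on $\whittaker\bigl(\contragredient{\pi}, \fieldCharacter^{-1}\bigr)$, and the Fourier transform intertwines $\rho\left(g_0\right)$ with $\rho\bigl(\outerInvolution{g_0}\bigr)$; invoking the invariance of $Z\left(\cdot, \cdot, \cdot; \fieldCharacter^{-1}\right)$ at $\outerInvolution{g_0}$, the two outer-involution twists cancel.

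The heart of the proof is showing $\dim \Hom_{\GL_n\left(\finiteField\right)}\left(\pi \otimes \sigma \otimes \SchwartzSpacen', \cComplex\right) \le 1$. Since $\GL_n\left(\finiteField\right)$ acts transitively on $\finiteField^n \setminus \left\{0\right\}$ with stabilizer $\mirabolic_n$ of $e_n$, we have $\SchwartzSpacen' \cong \ind{\mirabolic_n}{\GL_n\left(\finiteField\right)}{\mathbf{1}}$, and Frobenius reciprocity reduces the question to $\dim \Hom_{\mirabolic_n}\left(\pi, \contragredient{\sigma}\right) \le 1$. For irreducible cuspidal representations of $\GL_n\left(\finiteField\right)$, the Gelfand--Kazhdan / Kirillov theorem says that the restriction to $\mirabolic_n$ is isomorphic to the unique irreducible \emph{generic} representation $\ind{\unipotentSubgroup_n}{\mirabolic_n}{\fieldCharacter}$ of the mirabolic (independent of the choice of nontrivial additive character), so $\pi|_{\mirabolic_n} \cong \contragredient{\sigma}|_{\mirabolic_n}$ and Schur's lemma finishes the count.

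Non-triviality of these forms is seen by testing at $\phi = \delta_{e_n}$, which is $\mirabolic_n$-fixed, reducing $Z$ to the canonical non-degenerate $\mirabolic_n$-invariant pairing $\sum_{p \in \unipotentSubgroup_n \backslash \mirabolic_n} W\left(p\right) W'\left(p\right)$ on $\pi \otimes \sigma$. Non-vanishing of $\rsGammaFactor{\pi}{\sigma}$ itself follows by applying the functional equation a second time with $\pi, \sigma, \fieldCharacter$ replaced by $\contragredient{\pi}, \contragredient{\sigma}, \fieldCharacter^{-1}$, and using Fourier inversion $\fourierTransform{\fieldCharacter^{-1}}{\fourierTransform{\fieldCharacter}{\phi}} = q^n \phi$ together with $\widetilde{\tilde{W}} = W$, which yields $\rsGammaFactor{\pi}{\sigma} \cdot \rsGammaFactor[\fieldCharacter^{-1}]{\contragredient{\pi}}{\contragredient{\sigma}} = q^n$. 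The main obstacle will be the equivariance bookkeeping on the right-hand side, specifically tracking how the outer involution $g_0 \mapsto \outerInvolution{g_0}$ enters through both the tilde operation and the Fourier transform, and verifying that these two twists combine to restore invariance under the original diagonal action.
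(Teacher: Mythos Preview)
The paper does not supply its own proof of this statement; it is quoted from \cite{Ye18}. Your overall strategy---realizing both sides as elements of a one-dimensional space of invariant trilinear forms via $\SchwartzSpacen' \cong \ind{\mirabolic_n}{\GL_n\left(\finiteField\right)}{\mathbf{1}}$, Frobenius reciprocity, and the irreducibility of the mirabolic restriction of a cuspidal representation---is the standard and correct route to existence of the proportionality constant, and your equivariance bookkeeping for the right-hand side is accurate.

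There is, however, a gap in your argument for \emph{non-vanishing} of $\rsGammaFactor{\pi}{\sigma}$. Applying the functional equation for $\left(\contragredient{\pi},\contragredient{\sigma},\fieldCharacter^{-1}\right)$ to $\phi' = \fourierTransform{\fieldCharacter}{\phi}$ requires $\phi'\left(0\right)=0$, i.e.\ $\sum_x \phi\left(x\right)=0$; so the identity $\rsGammaFactor{\pi}{\sigma}\cdot\rsGammaFactor[\fieldCharacter^{-1}]{\contragredient{\pi}}{\contragredient{\sigma}} = q^n$ is only established for those $\phi$ satisfying both $\phi\left(0\right)=0$ and $\sum_x\phi\left(x\right)=0$. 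When $\pi \not\cong \contragredient{\sigma}$ this suffices, because $Z\left(W,W',\cdot\,;\fieldCharacter\right)$ then annihilates the constant function (no $\GL_n$-invariant pairing $\pi\otimes\sigma\to\cComplex$ exists) and hence remains nonzero on that subspace. But when $\pi \cong \contragredient{\sigma}$, the $\mirabolic_n$-invariant pairing $\sum_{p\in\unipotentSubgroup_n\backslash\mirabolic_n} W\left(p\right)W'\left(p\right)$ is automatically $\GL_n\left(\finiteField\right)$-invariant (both Hom spaces are one-dimensional, one contained in the other), which forces $Z\left(W,W',\phi;\fieldCharacter\right) = B\left(W,W'\right)\sum_x\phi\left(x\right)$ on $\SchwartzSpacen'$; this vanishes identically on your test subspace, and the double-application yields nothing. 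The paper's surrounding discussion mirrors exactly this dichotomy: \Cref{cor:size-of-rankin-selberg-m-equals-n} obtains $q^n$ only under the hypothesis $\pi\not\cong\contragredient{\sigma}$, while the case $\pi\cong\contragredient{\sigma}$ is handled by the separate explicit computation $\rsGammaFactor{\pi}{\contragredient{\pi}}=-1$ in \Cref{appendix:computation-of-rankin-selberg-pi-pi-dual}. You should either split into these two cases or argue non-vanishing of the right-hand form directly.
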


Similarly to the case $n > m$, we have an expression of $\rsGammaFactor{\pi}{\sigma}$ in terms of the Bessel functions of $\pi$ and $\sigma$.

\begin{proposition}[{\cite[Equation (4.4)]{Ye18}}]\label{prop:rankin-selberg-gamma-factor-m-equals-n-in-terms-of-bessel-functions}
	Let $\pi$ and $\sigma$ be irreducible cuspidal representations of $\GL_n\left(\finiteField\right)$. Then
	$$ \rsGammaFactor{\pi}{\sigma} = \sum_{g \in \unipotentSubgroup_n \backslash \GL_n\left(\finiteField\right)} \repBesselFunction{\pi}\left(g\right) \grepBesselFunction{\sigma}{\fieldCharacter^{-1}}\left(g\right) \fieldCharacter\left( \bilinearPairing{e_n g^{-1}}{e_1} \right),$$ where $e_1 = \left(1,0,\dots,0\right) \in \finiteField^n$.
\end{proposition}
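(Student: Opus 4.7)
The plan is to apply Ye's functional equation (\Cref{thm:rankin-selberg-m-equals-n-gamma-factor-definition}) with a carefully chosen triple of test data. Since the tilde operation is an isomorphism $\whittaker(\pi, \fieldCharacter) \to \whittaker(\contragredient{\pi}, \fieldCharacter^{-1})$ satisfying $\tilde{\tilde{W}} = W$, I will select $W \in \whittaker(\pi, \fieldCharacter)$ and $W' \in \whittaker(\sigma, \fieldCharacter^{-1})$ so that $\tilde{W} = \grepBesselFunction{\contragredient{\pi}}{\fieldCharacter^{-1}}$ and $\tilde{W}' = \grepBesselFunction{\contragredient{\sigma}}{\fieldCharacter}$. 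Applying \Cref{prop:contragredient-complex-conjugate}(2) yields the explicit formulas $W(g) = \repBesselFunction{\pi}(\transpose{g}\weyllong_n)$ and $W'(g) = \grepBesselFunction{\sigma}{\fieldCharacter^{-1}}(\transpose{g}\weyllong_n)$. For $\phi \in \SchwartzSpacen$, I take the indicator function of $\{e_1\}$, which satisfies $\phi(0) = 0$ and has Fourier transform $\fourierTransform{\fieldCharacter}{\phi}(y) = \fieldCharacter(\bilinearPairing{y}{e_1})$.

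With these choices, the left-hand side of Ye's functional equation reads $\sum_{g \in \unipotentSubgroup_n \backslash \GL_n(\finiteField)} \grepBesselFunction{\contragredient{\pi}}{\fieldCharacter^{-1}}(g) \grepBesselFunction{\contragredient{\sigma}}{\fieldCharacter}(g) \fieldCharacter(g_{n,1})$. The summand is invariant under both left and right multiplication of $g$ by elements of $\unipotentSubgroup_n$: the two Bessel-function characters $\fieldCharacter^{\pm 1}$ cancel in the product, while $g_{n,1}$ is unchanged because an upper-triangular unipotent matrix has $1$ at positions $(1,1)$ and $(n,n)$. I may therefore perform the bijection $g \mapsto g^{-1}$, which interchanges the left and right $\unipotentSubgroup_n$-coset spaces, and apply \Cref{prop:contragredient-complex-conjugate}(2) in the forms $\grepBesselFunction{\contragredient{\pi}}{\fieldCharacter^{-1}}(g^{-1}) = \repBesselFunction{\pi}(g)$ and $\grepBesselFunction{\contragredient{\sigma}}{\fieldCharacter}(g^{-1}) = \grepBesselFunction{\sigma}{\fieldCharacter^{-1}}(g)$, converting the sum into $\sum_{g \in \unipotentSubgroup_n \backslash \GL_n(\finiteField)} \repBesselFunction{\pi}(g) \grepBesselFunction{\sigma}{\fieldCharacter^{-1}}(g) \fieldCharacter(\bilinearPairing{e_n g^{-1}}{e_1})$, which is exactly the claimed formula.

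The right-hand side of the functional equation is $\rsGammaFactor{\pi}{\sigma} \cdot Z(W, W', \phi; \fieldCharacter)$, so it remains to establish $Z(W, W', \phi; \fieldCharacter) = 1$. After substituting $h = \transpose{g}\weyllong_n$, the condition $e_n g = e_1$ translates into $h e_1^T = e_1^T$, placing $h$ in the ``first-column-fixer'' mirabolic $Q_1 = \{h \in \GL_n(\finiteField) : h e_1^T = e_1^T\}$; the left $\unipotentSubgroup_n$-cosets in $\{g : e_n g = e_1\}$ correspond bijectively to right cosets of $\unipotentSubgroup_n$ in $Q_1$. Parameterizing $Q_1/\unipotentSubgroup_n \cong \GL_{n-1}(\finiteField)/\unipotentSubgroup_{n-1}$ by the representatives $\begin{pmatrix} 1 & 0 \\ 0 & g' \end{pmatrix}$, the zeta integral becomes
$$Z(W, W', \phi; \fieldCharacter) = \sum_{g' \in \GL_{n-1}(\finiteField)/\unipotentSubgroup_{n-1}} \repBesselFunction{\pi}\begin{pmatrix} 1 & 0 \\ 0 & g' \end{pmatrix} \grepBesselFunction{\sigma}{\fieldCharacter^{-1}}\begin{pmatrix} 1 & 0 \\ 0 & g' \end{pmatrix}.$$

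The main technical step and the hardest part of the argument is proving the vanishing $\repBesselFunction{\pi}\begin{pmatrix} 1 & 0 \\ 0 & g' \end{pmatrix} = 0$ for $g' \notin \unipotentSubgroup_{n-1}$, which I plan to derive from the bi-quasi-invariance $\repBesselFunction{\pi}(u_1 h u_2) = \fieldCharacter(u_1 u_2) \repBesselFunction{\pi}(h)$ for $u_1, u_2 \in \unipotentSubgroup_n$. If the first column of $g'$ is not $e_1^T$, the matrices $u_1 = \begin{pmatrix} 1 & v \\ 0 & \identityMatrix{n-1} \end{pmatrix}$ and $u_2 = \begin{pmatrix} 1 & -vg' \\ 0 & \identityMatrix{n-1} \end{pmatrix}$ both lie in $\unipotentSubgroup_n$ and stabilize $\begin{pmatrix} 1 & 0 \\ 0 & g' \end{pmatrix}$, and the character $\fieldCharacter(u_1 u_2) = \fieldCharacter\bigl(\sum_{j} v_j(\delta_{j,1} - g'_{j,1})\bigr)$ can be made non-trivial by an appropriate choice of $v$, forcing the Bessel value to vanish. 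Otherwise $g'$ has the form $\begin{pmatrix} 1 & * \\ 0 & g'' \end{pmatrix}$, and the right $\unipotentSubgroup_n$-equivariance of the Bessel function reduces the problem to the analogous statement for $g'' \in \GL_{n-2}(\finiteField)$ embedded in $\begin{pmatrix} \identityMatrix{2} & 0 \\ 0 & g'' \end{pmatrix}$, giving an induction on the block structure. Consequently only the trivial coset $g' = \identityMatrix{n-1}$ contributes, with value $\repBesselFunction{\pi}(\identityMatrix{n}) \grepBesselFunction{\sigma}{\fieldCharacter^{-1}}(\identityMatrix{n}) = 1$, so $Z(W, W', \phi; \fieldCharacter) = 1$ as required.
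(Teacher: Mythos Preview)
The paper does not give its own proof of this proposition; it simply cites \cite[Equation (4.4)]{Ye18}. Your argument is correct and is the natural way to deduce the formula from \Cref{thm:rankin-selberg-m-equals-n-gamma-factor-definition}: choose test data so that one side of the functional equation collapses to the desired Bessel-function sum and the other side becomes $\rsGammaFactor{\pi}{\sigma}$ times $1$.

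A few remarks on your execution. The bi-invariance check and the change of variable $g\mapsto g^{-1}$ on the left-hand side are fine, as is the identification $W(g)=\repBesselFunction{\pi}(\transpose{g}\weyllong_n)$. Your ``hardest part'', the vanishing of $\repBesselFunction{\pi}\left(\begin{smallmatrix}1&0\\0&g'\end{smallmatrix}\right)$ for $g'\notin\unipotentSubgroup_{n-1}$, is really the analogue of \cite[Lemma~2.14]{Nien14} for the first-column mirabolic $Q_1$ rather than the standard mirabolic $\mirabolic_n$, and your stabilizer/induction proof is exactly the standard argument for that lemma transported to $Q_1$; note in particular that the base case $\diag(I_{n-1},a)$ with $a\neq 1$ is already handled by your scheme, since then $t_{n-1}=1\neq a=t_n$ and the pair $u_1=I+vE_{n-1,n}$, $u_2=I-vaE_{n-1,n}$ does the job. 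Cuspidality is used only to invoke \Cref{thm:rankin-selberg-m-equals-n-gamma-factor-definition}; your vanishing lemma holds for any irreducible generic $\pi$.

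An alternative packaging would be to take $\phi=\delta_{e_n}$ and $W=\repBesselFunction{\pi}$, $W'=\grepBesselFunction{\sigma}{\fieldCharacter^{-1}}$ directly; then the right-hand side reduces immediately to $1$ by \cite[Lemma~2.14]{Nien14} as stated, but the left-hand side requires further manipulation to reach the form in the proposition. Your choice trades that manipulation for the $Q_1$-variant of Nien's lemma, which is equally easy.
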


It follows from \Cref{prop:rankin-selberg-gamma-factor-m-equals-n-in-terms-of-bessel-functions} and \Cref{prop:contragredient-complex-conjugate} that $$\rsGammaFactor[\fieldCharacter^{-1}]{\contragredient{\pi}}{\contragredient{\sigma}} = \conjugate{\rsGammaFactor{\pi}{\sigma}}.$$

We now move to discuss the absolute value of $\rsGammaFactor{\pi}{\sigma}$. In order to do that, we first explain how to extend the functional equation in \Cref{thm:rankin-selberg-m-equals-n-gamma-factor-definition} to all functions in $\SchwartzSpacen$ for most cases. To begin, we notice that for the indicator function of $0 \in \finiteField^n$, which we denote $\delta_0$, we have that $Z\left(W, W', \delta_0; \fieldCharacter\right) = 0$.
We also notice that if $\pi$ is not isomorphic to $\contragredient{\sigma}$, then $Z\left(W, W' , 1; \fieldCharacter\right) = 0$, where $1$ represents the constant function. This is because
$$Z\left(W, W' , 1; \fieldCharacter\right) = \sum_{g \in \unipotentSubgroup_n \backslash \GL_n\left(\finiteField\right)} W\left(g\right) W'\left(g\right)$$ defines a $\GL_n\left(\finiteField\right)$-invariant pairing $\whittaker\left(\pi,\fieldCharacter\right) \otimes \whittaker\left(\sigma, \fieldCharacter^{-1}\right) \rightarrow \cComplex$, but such non-trivial pairing exists only when $\pi$ is isomorphic to $\contragredient{\sigma}$. These two observations imply the following extension of the functional equation, in the special case where $\pi$ is not isomorphic to $\contragredient{\sigma}$.
\begin{proposition}\label{prop:extended-functional-equation-n-equals-m}
	Suppose that $\pi \ncong \contragredient{\sigma}$. Then for any $\phi \in \SchwartzSpacen$ we have
	$$Z(\tilde{W}, \tilde{W}', \fourierTransform{\fieldCharacter}{\phi}; \fieldCharacter^{-1}) = \rsGammaFactor{\pi}{\sigma} Z\left(W,W',\phi;\fieldCharacter\right).$$
\end{proposition}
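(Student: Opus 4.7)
The plan is to reduce the general statement to the already-established Theorem \ref{thm:rankin-selberg-m-equals-n-gamma-factor-definition} together with the two vanishing observations recorded in the paragraph just before the statement. The key point is that any $\phi \in \SchwartzSpacen$ admits the decomposition
$$\phi = \phi_0 + \phi(0) \cdot \delta_0,$$
where $\delta_0$ denotes the indicator function of $0 \in \finiteField^n$ and $\phi_0 := \phi - \phi(0) \cdot \delta_0$ satisfies $\phi_0(0) = 0$. Both $\phi \mapsto Z(W, W', \phi; \fieldCharacter)$ and the Fourier transform $\phi \mapsto \fourierTransform{\fieldCharacter}{\phi}$ are linear in $\phi$, so it suffices to verify the desired functional equation separately for $\phi_0$, which is handled directly by Theorem \ref{thm:rankin-selberg-m-equals-n-gamma-factor-definition}, and for the single function $\delta_0$.

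For $\phi = \delta_0$, I would show that both sides of the functional equation vanish. The left-hand side $Z(W, W', \delta_0; \fieldCharacter)$ vanishes termwise: for any $g \in \GL_n(\finiteField)$ the vector $e_n g$ is the last row of an invertible matrix, hence non-zero, so $\delta_0(e_n g) = 0$. This is the first observation in the text. For the right-hand side, one computes $\fourierTransform{\fieldCharacter}{\delta_0}(y) = 1$ for every $y$, so
$$Z(\tilde{W}, \tilde{W}', \fourierTransform{\fieldCharacter}{\delta_0}; \fieldCharacter^{-1}) = \sum_{g \in \unipotentSubgroup_n \backslash \GL_n(\finiteField)} \tilde{W}(g)\tilde{W}'(g),$$
which is a $\GL_n(\finiteField)$-invariant bilinear pairing between $\whittaker(\contragredient{\pi}, \fieldCharacter^{-1})$ and $\whittaker(\contragredient{\sigma}, \fieldCharacter)$. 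By Schur's lemma such a pairing must vanish unless $\contragredient{\pi} \cong (\contragredient{\sigma})^{\vee} = \sigma$, i.e.\ unless $\pi \cong \contragredient{\sigma}$. Since this is excluded by hypothesis, the sum is zero; this is precisely the second observation in the text.

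There is no real obstacle beyond bookkeeping: once one isolates the decomposition $\phi = \phi_0 + \phi(0)\delta_0$, the extension is forced by linearity of the zeta integral and of the Fourier transform, together with the two vanishing statements the authors have already established. The essential nontriviality lives in the second observation, whose validity relies on uniqueness of the invariant pairing between inequivalent irreducible representations of $\GL_n(\finiteField)$.
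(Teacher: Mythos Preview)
Your argument is correct and essentially the same as the paper's: both decompose $\phi$ so as to isolate a piece $\phi_0$ with $\phi_0(0)=0$ (to which Theorem \ref{thm:rankin-selberg-m-equals-n-gamma-factor-definition} applies) and a remainder on which both sides vanish by the two recorded observations. The only cosmetic differences are that the paper subtracts the constant function $\phi(0)$ rather than $\phi(0)\,\delta_0$ (these two choices are Fourier-dual, so the same two vanishing facts are invoked in the opposite order), and that your labels ``left-hand side'' and ``right-hand side'' are swapped relative to the displayed functional equation.
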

\begin{proof}
	Write $\phi = \phi_0 + \phi_1$, where $\phi_0 = \phi - \phi(0)$ and $\phi_1 = \phi(0)$. Then $\phi_0\left(0\right) = 0$ and $\fourierTransform{\fieldCharacter}{\phi_1} = q^n \phi\left(0\right) \delta_0$. Since $Z$ is linear in $\phi$, we have from the discussion above that $$Z\left(W,W',\phi;\fieldCharacter\right) = Z\left(W,W',\phi_0;\fieldCharacter\right) $$ and that $$Z(\tilde{W},\tilde{W}',\fourierTransform{\fieldCharacter}{\phi};\fieldCharacter^{-1}) = Z(\tilde{W},\tilde{W}',\fourierTransform{\fieldCharacter}{\phi_0};\fieldCharacter^{-1}).$$ The statement now follows from \Cref{thm:rankin-selberg-m-equals-n-gamma-factor-definition}.
\end{proof}

As a result, we get the following corollary regarding the absolute value of $\rsGammaFactor{\pi}{\sigma}$.

\begin{corollary}\label{cor:size-of-rankin-selberg-m-equals-n}Let $\pi$ and $\sigma$ be irreducible cuspidal representations of $\GL_n\left(\finiteField\right)$ such that $\pi \ncong \contragredient{\sigma}$. Then
	$$\rsGammaFactor{\pi}{\sigma} \rsGammaFactor[\fieldCharacter^{-1}]{\contragredient{\pi}}{\contragredient{\sigma}} = q^{n},$$
	and therefore
	$$ \abs{\rsGammaFactor{\pi}{\sigma}} = q^{\frac{n}{2}}.$$
\end{corollary}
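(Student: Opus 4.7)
The plan is to apply the extended functional equation of \Cref{prop:extended-functional-equation-n-equals-m} twice and combine it with Fourier inversion on $\finiteField^n$. Observe first that the hypothesis $\pi \ncong \contragredient{\sigma}$ is symmetric: it is equivalent to $\contragredient{\pi} \ncong \sigma = \contragredient{\contragredient{\sigma}}$, so the extended functional equation applies to both $(\pi,\sigma,\fieldCharacter)$ and $(\contragredient{\pi},\contragredient{\sigma},\fieldCharacter^{-1})$. A single application gives, for any $W \in \whittaker(\pi,\fieldCharacter)$, $W' \in \whittaker(\sigma,\fieldCharacter^{-1})$ and any $\phi \in \SchwartzSpacen$,
$$Z(\tilde{W}, \tilde{W}', \fourierTransform{\fieldCharacter}{\phi}; \fieldCharacter^{-1}) = \rsGammaFactor{\pi}{\sigma} Z(W, W', \phi; \fieldCharacter).$$
Applying it again with $(\pi,\sigma,\fieldCharacter)$ replaced by $(\contragredient{\pi},\contragredient{\sigma},\fieldCharacter^{-1})$ and with the test data $(\tilde{W},\tilde{W}',\fourierTransform{\fieldCharacter}{\phi})$ yields
$$Z(\tilde{\tilde{W}}, \tilde{\tilde{W}'}, \fourierTransform{\fieldCharacter^{-1}}{\fourierTransform{\fieldCharacter}{\phi}}; \fieldCharacter) = \rsGammaFactor[\fieldCharacter^{-1}]{\contragredient{\pi}}{\contragredient{\sigma}} \cdot \rsGammaFactor{\pi}{\sigma} \cdot Z(W, W', \phi; \fieldCharacter).$$

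The next step is to simplify the left-hand side through two elementary computations. First, the tilde operation is an involution: since $\weyllong_n$ is symmetric and self-inverse one checks that $\outerInvolution{\weyllong_n} = \weyllong_n$, so $\tilde{\tilde{W}}(g) = W(\weyllong_n \outerInvolution{(\weyllong_n \outerInvolution{g})}) = W(g)$, and similarly $\tilde{\tilde{W}'} = W'$. Second, by orthogonality of the characters of $\finiteField^n$,
$$\fourierTransform{\fieldCharacter^{-1}}{\fourierTransform{\fieldCharacter}{\phi}}(y) = \sum_{x} \phi(x) \sum_{z} \fieldCharacter\bigl(\bilinearPairing{z}{x - y}\bigr) = q^n \phi(y).$$
Plugging these into the equation above and invoking linearity of $Z$ in $\phi$ gives
$$q^n \cdot Z(W, W', \phi; \fieldCharacter) = \rsGammaFactor[\fieldCharacter^{-1}]{\contragredient{\pi}}{\contragredient{\sigma}} \cdot \rsGammaFactor{\pi}{\sigma} \cdot Z(W, W', \phi; \fieldCharacter).$$

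To conclude the first identity it suffices to exhibit $W,W',\phi$ for which $Z(W,W',\phi;\fieldCharacter) \ne 0$. This follows from \Cref{thm:rankin-selberg-m-equals-n-gamma-factor-definition}: the constant $\rsGammaFactor{\pi}{\sigma}$ is non-zero and is defined through the functional equation, so the $Z$-integral cannot vanish identically (alternatively, take $W=\repBesselFunction{\pi}$, $W'=\grepBesselFunction{\sigma}{\fieldCharacter^{-1}}$ and use the expression of $\rsGammaFactor{\pi}{\sigma}$ via Bessel functions, along with the extended functional equation, to produce a non-vanishing instance). Canceling the common factor yields $\rsGammaFactor{\pi}{\sigma} \cdot \rsGammaFactor[\fieldCharacter^{-1}]{\contragredient{\pi}}{\contragredient{\sigma}} = q^n$.

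The bound on the absolute value is then immediate from the conjugation identity $\conjugate{\rsGammaFactor{\pi}{\sigma}} = \rsGammaFactor[\fieldCharacter^{-1}]{\contragredient{\pi}}{\contragredient{\sigma}}$ already recorded just after \Cref{prop:rankin-selberg-gamma-factor-m-equals-n-in-terms-of-bessel-functions}, since then $\abs{\rsGammaFactor{\pi}{\sigma}}^2 = q^n$. The main subtle point in the argument is the symmetry of the hypothesis $\pi \ncong \contragredient{\sigma}$ under the tilde-duality, which is what allows us to iterate the extended functional equation; everything else is a routine application of Fourier inversion.
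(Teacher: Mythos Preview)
Your proof is correct and follows exactly the same approach as the paper's: apply \Cref{prop:extended-functional-equation-n-equals-m} twice and use Fourier inversion $\fourierTransform{\fieldCharacter^{-1}}{\fourierTransform{\fieldCharacter}{\phi}} = q^n \phi$. You have simply made explicit the details (symmetry of the hypothesis, the involutivity of $W \mapsto \tilde{W}$, non-vanishing of some $Z$, and the conjugation identity for the absolute value) that the paper leaves to the reader.
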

\begin{proof}
	This follows by applying \Cref{prop:extended-functional-equation-n-equals-m} twice, and from the fact that the Fourier transform satisfies
	$$  \fourierTransform{\fieldCharacter^{-1}}{\fourierTransform{\fieldCharacter}{\phi}} = q^n \phi,$$ for any $\phi \in \SchwartzSpacen$.
\end{proof}

We are left to deal with the case $\pi \cong \contragredient{\sigma}$. In this case, the gamma factor $\rsGammaFactor{\pi}{\contragredient{\pi}}$ can be computed explicitly and it equals $-1$, see \Cref{appendix:computation-of-rankin-selberg-pi-pi-dual}.

We summarize all cases in the following proposition.

\begin{proposition}\label{prop:size-of-rs-gamma-factor-m-equals-n}
	Let $\pi$ and $\sigma$ be irreducible cuspidal representations of $\GL_n\left(\finiteField\right)$.
	\begin{itemize}
		\item If $\pi \ncong \contragredient{\sigma}$ then $\abs{\rsGammaFactor{\pi}{\sigma}} = q^{\frac{n}{2}}$.
		\item If $\pi \cong \contragredient{\sigma}$ then $\abs{\rsGammaFactor{\pi}{\sigma}} = 1$.
	\end{itemize}
\end{proposition}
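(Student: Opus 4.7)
The proposition has two cases, which I would handle separately. The case $\pi \ncong \contragredient{\sigma}$ is exactly the content of \Cref{cor:size-of-rankin-selberg-m-equals-n}, so no further work is required there.

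For the case $\sigma \cong \contragredient{\pi}$, my plan is to prove the sharper identity $\rsGammaFactor{\pi}{\contragredient{\pi}} = -1$, which immediately gives $\abs{\rsGammaFactor{\pi}{\sigma}} = 1$. The strategy is to extend the functional equation of \Cref{thm:rankin-selberg-m-equals-n-gamma-factor-definition} to all $\phi \in \SchwartzSpacen$ while tracking a correction term coming from the nontrivial pairing. Decomposing $\phi = \phi_0 + \phi(0) \cdot \mathbf{1}$, where $\mathbf{1}$ is the constant function $1$ so that $\phi_0(0) = 0$, and using $\fourierTransform{\fieldCharacter}{\mathbf{1}} = q^n \delta_0$ together with $Z(\cdot, \cdot, \delta_0; \cdot) = 0$, applying the functional equation to $\phi_0$ gives
\begin{equation*}
	Z\bigl(\tilde W, \tilde W', \fourierTransform{\fieldCharacter}{\phi}; \fieldCharacter^{-1}\bigr) = \rsGammaFactor{\pi}{\contragredient{\pi}} \bigl[\, Z(W, W', \phi; \fieldCharacter) - \phi(0) \cdot Z(W, W', \mathbf{1}; \fieldCharacter) \,\bigr].
\end{equation*}
Specializing to $\phi = \delta_0$ (so that $\fourierTransform{\fieldCharacter}{\phi} = \mathbf{1}$ and $\phi(0) = 1$) collapses this to
\begin{equation*}
	\sum_{g \in \unipotentSubgroup_n \backslash \GL_n(\finiteField)} \tilde W(g) \tilde W'(g) \;=\; -\rsGammaFactor{\pi}{\contragredient{\pi}} \sum_{g \in \unipotentSubgroup_n \backslash \GL_n(\finiteField)} W(g) W'(g),
\end{equation*}
an equality between two $\GL_n(\finiteField)$-invariant pairings on $\whittaker(\pi, \fieldCharacter) \otimes \whittaker(\contragredient{\pi}, \fieldCharacter^{-1})$, a one-dimensional and nonzero space of pairings (realized by the canonical duality).

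To conclude, I would show that the two pairings are in fact \emph{equal} (not merely proportional) by an explicit change of variables. Both integrands are $\unipotentSubgroup_n$-left-invariant, so each sum equals $\tfrac{1}{|\unipotentSubgroup_n|}\sum_{g \in \GL_n(\finiteField)}$; reparametrizing the left-hand side by the bijection $g \mapsto \weyllong_n \outerInvolution{g}$ of $\GL_n(\finiteField)$ — using $\weyllong_n^2 = \identityMatrix{n}$ and $\weyllong_n^{\iota} = \weyllong_n$ — transforms $W(\weyllong_n \outerInvolution{g}) W'(\weyllong_n \outerInvolution{g})$ into $W(g) W'(g)$, making the two sums coincide. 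This forces $\rsGammaFactor{\pi}{\contragredient{\pi}} = -1$. The main obstacle is precisely this scalar-tracking step: Schur's lemma alone only yields proportionality, so one must carefully verify that the lift to $\GL_n(\finiteField)$, the substitution by $g \mapsto \weyllong_n \outerInvolution{g}$, and the descent back to $\unipotentSubgroup_n$-cosets introduce no stray factors of $|\unipotentSubgroup_n|$ or sign.
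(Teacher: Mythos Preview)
Your argument is correct, and for the case $\pi \cong \contragredient{\sigma}$ it takes a genuinely different route from the paper. The paper defers this case to the appendix, where it proves $\rsGammaFactor{\pi}{\contragredient{\pi}} = -1$ via the Bessel-function expression of \Cref{prop:rankin-selberg-gamma-factor-m-equals-n-in-terms-of-bessel-functions} and a general lemma (\Cref{lem:gamma-factors-exceptional-case}) that exploits the irreducibility of the restriction of a cuspidal representation to the mirabolic subgroup; the value $-1$ then emerges from an explicit count over $\mirabolic_n \backslash \GL_n\left(\finiteField\right)$. Your approach instead stays entirely within the functional equation: by decomposing $\phi = \phi_0 + \phi(0)\cdot \mathbf{1}$, specializing to $\phi = \delta_0$, and matching the two invariant pairings via the bijection $g \mapsto \weyllong_n \outerInvolution{g}$ on $\GL_n\left(\finiteField\right)$, you obtain $-1$ directly. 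This is more elementary and self-contained --- it uses neither the Bessel formula nor the mirabolic irreducibility --- though it would help to note explicitly that the pairing $\sum_g W(g)W'(g)$ is nonzero (e.g.\ take $W = \repBesselFunction{\pi}$, $W' = \grepBesselFunction{\contragredient{\pi}}{\fieldCharacter^{-1}}$ and invoke \Cref{prop:contragredient-complex-conjugate} to see the sum is strictly positive). The paper's approach, by contrast, is packaged as a general lemma applicable beyond the $\GL_n \times \GL_n$ setting.
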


\section{Shahidi gamma factors (local coefficients)}

In this section, we use the Langlands--Shahidi method in order to define a gamma factor for two representations of Whittaker type of finite general linear groups. This is the finite field analog of Shahidi's local coefficient, which uses an intertwining operator. The treatment in Sections \ref{subsection:intertwining-operator}-\ref{subsection:multiplicativity-of-gamma-factors} is a finite field analog of Shahidi's work on local coefficients over local fields \cite{shahidi1984fourier}. Unlike the Jacquet--Piatetski-Shapiro--Shalika gamma factors discussed in \Cref{section:rankin-selberg-gamma-factors}, the Shahidi gamma factor can be defined uniformly for all irreducible generic representations of $\GL_n\left(\finiteField\right)$ and $\GL_m\left(\finiteField\right)$, regardless of $n > m$ or whether the representations are cuspidal. We prove properties of the Shahidi gamma factor, where the most important one is the multiplicativity property, which explains how this gamma factor behaves under parabolic induction. We end this section by expressing the Shahidi gamma factor in terms of the Bessel functions associated with the representations, and showing its relation to the Jacquet--Piatetski-Shapiro--Shalika gamma factor.

\subsection{The intertwining operator}\label{subsection:intertwining-operator}

Let $n$ and $m$ be positive integers and let $\pi$ and $\sigma$ be representations of $\GL_n\left(\finiteField\right)$ and $\GL_m\left(\finiteField\right)$, respectively. We define a linear map $ \sigma \otimes \pi \rightarrow \pi \otimes \sigma$ acting on pure tensors by component swap:
$$ \swapHomomorphism_{\sigma,\pi} \left(v_\sigma \otimes v_\pi\right) = v_\pi \otimes v_\sigma.$$ For a function $f \colon \GL_{n + m}\left(\finiteField\right) \rightarrow \sigma \otimes \pi$, we denote by $\tilde{f} \colon \GL_{n + m}\left(\finiteField\right) \rightarrow \pi \otimes \sigma$ the function $\tilde{f}\left(g\right) = \swapHomomorphism_{\sigma,\pi}\left(f(g)\right)$.

We consider the following intertwining operator $T_{\sigma,\pi} \colon \sigma \parabolicInduction \pi \rightarrow \pi \parabolicInduction \sigma$, defined for $f \in \sigma \parabolicInduction \pi$ and $g \in \GL_{n+m}\left(\finiteField\right)$ by the formula 
$$T_{\sigma,\pi} f\left(g\right) = \sum_{p \in \parabolicGroup_{n,m} } {\left(\pi \parabolicTensor \sigma\right)\left(p^{-1}\right) {\tilde{f}\left(\weylnm_{n,m} p g\right)}},$$ where $\weylnm_{n,m}$ is the following Weyl element $$\weylnm_{n,m} = \weylnmmatrix{m}{n}.$$

Using the decomposition $\parabolicGroup_{n,m} = \leviGroup_{n,m} \ltimes \unipotentRadical_{n,m}$, we may write every $p \in \parabolicGroup_{n,m}$ in a unique way $p = du$, where $u \in \unipotentRadical_{n,m}$ and $d = \diag\left(g_1, g_2\right) \in \leviGroup_{n,m}$, with  $g_1 \in \GL_n\left(\finiteField\right)$ and $g_2 \in \GL_m\left(\finiteField\right)$. It follows from the identity $$\diag\left(g_2,g_1\right) \weylnm_{n,m} = \weylnm_{n,m} \diag\left(g_1,g_2\right),$$ and from the left $\leviGroup_{m,n}$-equivariance property of $f$ that
$$T_{\sigma,\pi}f\left(g\right) = \sizeof{\leviGroup_{n,m}} \cdot U_{\sigma,\pi}f\left(g\right),$$
where
$$U_{\sigma,\pi}f\left(g\right) = \sum_{u \in \unipotentRadical_{n,m}} \tilde{f}\left(\weylnm_{n,m} u g\right).$$

By construction, we have that $T_{\sigma, \pi}$ and $U_{\sigma, \pi}$ are non-zero elements of the space $$\Hom_{\GL_{n + m}\left(\finiteField\right)}\left( \sigma \parabolicInduction \pi, \pi \parabolicInduction \sigma \right).$$

\subsection{The Shahidi gamma factor}\label{subsection:shahidi-gamma-factor}

Suppose now that $\pi$ and $\sigma$ are representations of Whittaker type of $\GL_n\left(\finiteField\right)$ and $\GL_m\left(\finiteField\right)$, respectively. By \Cref{thm:unique-whittaker-vector-parabolic-induction} we have that the parabolically induced representations $\sigma \parabolicInduction \pi$ and $\pi \parabolicInduction \sigma$ are also of Whittaker type. Let $\whittakerVector{\sigma} \in \sigma$ and $\whittakerVector{\pi} \in \pi$ be non-zero $\fieldCharacter$-Whittaker vectors for $\sigma$ and $\pi$, respectively. We may define a non-zero $\fieldCharacter$-Whittaker vector $\pairWhittakerVector{\sigma}{\pi}$ for $\sigma \parabolicInduction \pi$ by the formula $$\pairWhittakerVector{\sigma}{\pi} \left(g\right) = \begin{dcases}
\psi\left(u\right) \left(\sigma \parabolicTensor \pi\right) \left(p\right) \whittakerVector{\sigma} \otimes \whittakerVector{\pi} & g = p \weylnm_{n,m} u,\, p \in \parabolicGroup_{m,n}, u \in \unipotentSubgroup_{n + m}, \\
0 & \text{otherwise}.
\end{dcases} $$
Similarly, we may define $\pairWhittakerVector{\pi}{\sigma} \in \pi \parabolicInduction \sigma$.

Since $U_{\sigma, \pi}$ is an intertwining operator, we have that $U_{\sigma, \pi} \pairWhittakerVector{\sigma}{\pi}$ is a $\fieldCharacter$-Whittaker vector of $\pi \parabolicInduction \sigma$. Since $\pairWhittakerVector{\pi}{\sigma}$ is the unique non-zero $\fieldCharacter$-Whittaker vector of $\pi \parabolicInduction \sigma$ up to scalar, we must have that
$$U_{\sigma, \pi} \pairWhittakerVector{\sigma}{\pi} = \gamma \cdot \pairWhittakerVector{\pi}{\sigma},$$ where $\gamma \in \cComplex$. It is easy to check that this number $\gamma$ does not depend on the choice of $\fieldCharacter$-Whittaker vectors $\whittakerVector{\sigma}$ and $\whittakerVector{\pi}$.

In order to ease the notation, we denote $\spairWhittakerVector{\sigma}{\pi} = \pairWhittakerVector{\sigma}{\pi}$, where we suppress $\whittakerVector{\sigma}$ and $\whittakerVector{\pi}$ from the notation. Similarly, we denote $\spairWhittakerVector{\pi}{\sigma} = \pairWhittakerVector{\pi}{\sigma}$.

\begin{definition}
	The Shahidi gamma factor of $\pi$ and $\sigma$ with respect to $\fieldCharacter$ is the unique number $\gammaFactor{\pi}{\sigma} \in \cComplex$, such that $$U_{\sigma, \pi} \spairWhittakerVector{\sigma}{\pi} = \gammaFactor{\pi}{\sigma} \cdot \spairWhittakerVector{\pi}{\sigma}.$$
\end{definition}

\begin{remark}
	If $\pi \parabolicInduction \sigma$ is irreducible, then so is $\sigma \parabolicInduction \pi$, and since $U_{\sigma, \pi}$ is a non-zero intertwining operator, it is an isomorphism and $\gammaFactor{\pi}{\sigma}$ must be non-zero. However, in the general case it is not obvious at this point that $\gammaFactor{\pi}{\sigma}$ is non-zero. We will show this later.
\end{remark} 

\begin{remark}
	As in \Cref{rem:inner-product-and-whittaker-vector}, we may choose invariant inner products $\innerproduct{\cdot}{\cdot}_\pi$ and $\innerproduct{\cdot}{\cdot}_\sigma$ on $\pi$ and $\sigma$, respectively. We then have a natural inner product $\innerproduct{\cdot}{\cdot}_{\sigma \otimes \pi}$ on $\sigma \otimes \pi$, which defines an inner product on $\sigma \parabolicInduction \pi$ by the formula
	$$ \innerproduct{f_1}{f_2}_{\sigma \parabolicInduction \pi} = \sum_{g \in \parabolicGroup_{m,n} \backslash \GL_{n+m}\left(\finiteField\right)} \innerproduct{f_1\left(g\right)}{f_2\left(g\right)}_{\sigma \otimes \pi}.$$
	Using this inner product, the Whittaker functional $\whittakerFunctional{\sigma \parabolicInduction \pi} \left(f\right) = \innerproduct{f}{\spairWhittakerVector{\sigma}{\pi}}_{\sigma \parabolicInduction \pi}$ is related to the Whittaker functionals $\whittakerFunctional{\sigma}\left(v_\sigma\right) = \innerproduct{v_\sigma}{\whittakerVector{\sigma}}_\sigma$ and $\whittakerFunctional{\pi}\left(v_\pi\right) = \innerproduct{v_\pi}{\whittakerVector{\pi}}_\pi$ by the formula
	$$ \whittakerFunctional{\sigma \parabolicInduction \pi}\left(f\right) = \sum_{u \in \unipotentRadical_{n,m}} \whittakerFunctional{\sigma} \otimes \whittakerFunctional{\pi} \left( f\left(\weylnm_{n,m} u\right) \right) \fieldCharacter^{-1}\left(u\right).$$
	Similarly, by exchanging the roles of $\pi$ and $\sigma$, we have that Whittaker functional $\whittakerFunctional{\pi \parabolicInduction \sigma}$ is given by a similar formula. Using the definitions of the inner products, and the fact that elements in $\pi \parabolicInduction \sigma$ are left invariant under $\unipotentRadical_{n,m}$, we see that $U_{\pi, \sigma}$ is the adjoint of $U_{\sigma, \pi}$, with respect to our choice of inner products. Using the relation between $\spairWhittakerVector{\sigma}{\pi}$ and $\spairWhittakerVector{\pi}{\sigma}$, we obtain the following relation
	$$\whittakerFunctional{\sigma \parabolicInduction \pi} \circ U_{\pi, \sigma} = \gammaFactor{\pi}{\sigma} \cdot \whittakerFunctional{\pi \parabolicInduction \sigma}.$$
	This is how the Shahidi gamma factor is usually defined in the literature.
\end{remark}

\subsubsection{Dependence on $\fieldCharacter$}\label{subsubsection:dependence-on-character}
For any $a \in \multiplicativegroup{\finiteField}$, let $\fieldCharacter_a \colon \finiteField \rightarrow \multiplicativegroup{\cComplex}$ be the additive character $$\fieldCharacter_a\left(x\right) = \fieldCharacter\left(ax\right).$$ It is well known that all non-trivial additive characters of $\finiteField$ are of the form $\fieldCharacter_a$ for some $a \in \multiplicativegroup{\finiteField}$. In this section, we give a relation between $\gammaFactor{\pi}{\sigma}$ and $\gammaFactor[\fieldCharacter_a]{\pi}{\sigma}$.

Let $a \in \multiplicativegroup{\finiteField}$. Suppose that $\tau$ is a generic representation of $\GL_k\left(\finiteField\right)$ with a non-zero  $\fieldCharacter$-Whittaker vector $\whittakerVector{\tau}$. Let $$d_{k} = \diag\left(1,a,a^2,\dots,a^{k-1}\right).$$ Then we have that $\tau\left(d_{k}\right) \whittakerVector{\tau}$ is a non-zero $\fieldCharacter_a$-Whittaker vector of $\tau$. The map $v \mapsto \tau\left(d_{k}\right) v$ is a linear isomorphism from the subspace spanned by the $\fieldCharacter$-Whittaker vectors of $\tau$ to the subspace spanned by the $\fieldCharacter_a$-Whittaker vectors of $\tau$. In particular, if $\whittakerVector{\tau}$ is the unique (up to scalar multiplication) $\fieldCharacter$-Whittaker vector of $\tau$, then $\tau\left(d_{k}\right) \whittakerVector{\tau}$ is the unique (up to scalar multiplication) $\fieldCharacter_a$-Whittaker vector of $\tau$.

Let $\pi$ and $\sigma$ be representations of Whittaker type of $\GL_n\left(\finiteField\right)$ and $\GL_m\left(\finiteField\right)$, respectively. Let $\whittakerVector{\pi} \in \pi$ and $\whittakerVector{\sigma} \in \sigma$ be non-zero $\fieldCharacter$-Whittaker vectors. Assume that $\pi$ and $\sigma$ have central characters, and denote them by $\centralCharacter{\pi}$ and $\centralCharacter{\sigma}$, respectively. Let $$\gspairWhittakerVector{\sigma}{\pi}{\fieldCharacter_a} = f_{\sigma\left(d_m\right) \whittakerVector{\sigma}, \pi\left(d_n\right)\whittakerVector{\pi}}.$$ Similarly, we define $\gspairWhittakerVector{\pi}{\sigma}{\fieldCharacter_a}$.

We first express $\gspairWhittakerVector{\sigma}{\pi}{\fieldCharacter_a}$ in terms of $\spairWhittakerVector{\sigma}{\pi}$. We will use this relation later to show a relation between the gamma factors $\gammaFactor[\fieldCharacter_a]{\pi}{\sigma}$ and $\gammaFactor{\pi}{\sigma}$.

\begin{proposition}\label{prop:identity-of-whittaker-vector-for-different-character}
	 We have $$\gspairWhittakerVector{\sigma}{\pi}{\fieldCharacter_a} =\centralCharacter{\sigma}\left(a\right)^{-n} \rho\left(d_{n+m}\right) \spairWhittakerVector{\sigma}{\pi},$$ where $\rho\left(d_{n+m}\right)$ denotes right translation by $d_{n+m}$.
\end{proposition}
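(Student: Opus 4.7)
The plan is to identify the right-hand side as a $\fieldCharacter_a$-Whittaker vector of $\sigma \parabolicInduction \pi$ and then pin it down via the uniqueness of Whittaker vectors in a representation of Whittaker type (\Cref{thm:unique-whittaker-vector-parabolic-induction}). Specifically, set $F = \rho(d_{n+m})\spairWhittakerVector{\sigma}{\pi}$, so $F(g) = \spairWhittakerVector{\sigma}{\pi}(gd_{n+m})$. Since $d_{n+m}$ normalizes $\unipotentSubgroup_{n+m}$ and conjugation by $d_{n+m}^{-1}$ scales every superdiagonal entry by $a$, one has $\fieldCharacter(d_{n+m}^{-1}ud_{n+m}) = \fieldCharacter_a(u)$ for every $u \in \unipotentSubgroup_{n+m}$. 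Combining this with the $\fieldCharacter$-Whittaker property of $\spairWhittakerVector{\sigma}{\pi}$ gives
$$F(gu) = \spairWhittakerVector{\sigma}{\pi}\left(gd_{n+m} \cdot d_{n+m}^{-1}ud_{n+m}\right) = \fieldCharacter_a(u)\, F(g),$$
so $F$ is a $\fieldCharacter_a$-Whittaker vector of $\sigma \parabolicInduction \pi$. Since $\gspairWhittakerVector{\sigma}{\pi}{\fieldCharacter_a}$ is also one, they must be proportional.

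To determine the constant of proportionality, I will evaluate both vectors at $g = \weylnm_{n,m}$. Directly from the definition, $\gspairWhittakerVector{\sigma}{\pi}{\fieldCharacter_a}(\weylnm_{n,m}) = \sigma(d_m)\whittakerVector{\sigma} \otimes \pi(d_n)\whittakerVector{\pi}$. For $F$, write $\weylnm_{n,m} d_{n+m} = p_0\, \weylnm_{n,m}$ where $p_0 = \weylnm_{n,m}d_{n+m}\weylnm_{n,m}^{-1}$, which is diagonal since $d_{n+m}$ is and $\weylnm_{n,m}$ is a permutation matrix. The crucial computation is identifying $p_0$ as an element of $\leviGroup_{m,n}$: a direct permutation calculation shows $p_0 = \diag(a^n d_m,\, d_n) \in \leviGroup_{m,n}$. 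Using that $a^n \identityMatrix{m}$ is central in $\GL_m(\finiteField)$,
$$F(\weylnm_{n,m}) = \sigma(a^n d_m)\whittakerVector{\sigma} \otimes \pi(d_n)\whittakerVector{\pi} = \centralCharacter{\sigma}(a)^n\, \sigma(d_m)\whittakerVector{\sigma} \otimes \pi(d_n)\whittakerVector{\pi}.$$
Comparing the two evaluations, $F = \centralCharacter{\sigma}(a)^n \gspairWhittakerVector{\sigma}{\pi}{\fieldCharacter_a}$, which rearranges to the proposition.

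The main technical point is the explicit identification $p_0 = \diag(a^n d_m,\, d_n)$. This is really just permutation bookkeeping, but one has to track carefully how the Weyl element $\weylnm_{n,m}$ moves the first $n$ standard basis vectors past the last $m$, so that the first $m$ diagonal positions of $p_0$ pick up the entries $a^n, a^{n+1}, \dots, a^{n+m-1}$ of $d_{n+m}$ (contributing the central factor $a^n$), while the last $n$ positions of $p_0$ recover $1, a, \dots, a^{n-1}$; one must not confuse the block partition $(m, n)$ used to describe $\leviGroup_{m,n}$ with the partition $(n, m)$ governing $\weylnm_{n,m}$.
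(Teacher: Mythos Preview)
Your proof is correct and follows essentially the same approach as the paper: both show that $\rho(d_{n+m})\spairWhittakerVector{\sigma}{\pi}$ is a $\fieldCharacter_a$-Whittaker vector, evaluate at $\weylnm_{n,m}$ via the identity $\weylnm_{n,m}d_{n+m} = \diag(a^n d_m, d_n)\weylnm_{n,m}$, and invoke uniqueness of Whittaker vectors in a representation of Whittaker type. The only difference is cosmetic---the paper absorbs the scalar $\centralCharacter{\sigma}(a)^{-n}$ into the vector from the outset, while you carry it as a proportionality constant to be determined.
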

\begin{proof}
	Let $f = \centralCharacter{\sigma}\left(a\right)^{-n} \rho\left(d_{n+m}\right) \spairWhittakerVector{\sigma}{\pi} \in \sigma \parabolicInduction \pi$. 
	By the discussion above, $f$ is a $\fieldCharacter_a$-Whittaker vector of $\sigma \parabolicInduction \pi$. 
	
	We have that $$f\left( \weylnm_{n,m} \right) = \centralCharacter{\sigma}\left(a\right)^{-n} \spairWhittakerVector{\sigma}{\pi}\left(\weylnm_{n,m}  d_{n+m} \right).$$
	Writing $d_{n + m} = \diag\left(d_n, a^n d_m\right)$, we have $\weylnm_{n,m} d_{n+m} = \diag\left(a^n d_m, d_n\right) \weylnm_{n,m}$, and hence
	$$f\left( \weylnm_{n,m}  \right) = \left(\sigma\left(d_m\right) \otimes \pi\left(d_n\right) \right) \spairWhittakerVector{\sigma}{\pi}\left(\weylnm_{n,m} \right) = \sigma\left(d_m\right) \whittakerVector{\sigma} \otimes \pi\left(d_n\right) \whittakerVector{\pi}.$$
	This shows that $f = \gspairWhittakerVector{\sigma}{\pi}{\fieldCharacter_a}$, as both are $\fieldCharacter_a$-Whittaker vectors in $\sigma \parabolicInduction \pi$, and both agree at the point $\weylnm_{n,m}$.
\end{proof}

\begin{theorem}\label{thm:change-of-additive-character}We have
	$$ \gammaFactor[\fieldCharacter_a]{\pi}{\sigma} = \centralCharacter{\pi}\left(a\right)^m \cdot \centralCharacter{\sigma}\left(a\right)^{-n} \cdot \gammaFactor{\pi}{\sigma}.$$
\end{theorem}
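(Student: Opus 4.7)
The plan is to combine \Cref{prop:identity-of-whittaker-vector-for-different-character} (applied twice, once with the roles of $\pi$ and $\sigma$ swapped) with the observation that the intertwining operator $U_{\sigma,\pi}$, being a $\GL_{n+m}(\finiteField)$-homomorphism, commutes with right translation. The equality we want will then fall out of comparing the two natural expressions for $U_{\sigma,\pi}\gspairWhittakerVector{\sigma}{\pi}{\fieldCharacter_a}$.

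First I would apply $U_{\sigma,\pi}$ to the identity
$$\gspairWhittakerVector{\sigma}{\pi}{\fieldCharacter_a}=\centralCharacter{\sigma}(a)^{-n}\,\rho(d_{n+m})\,\spairWhittakerVector{\sigma}{\pi}$$
from \Cref{prop:identity-of-whittaker-vector-for-different-character}. Since $U_{\sigma,\pi}$ commutes with $\rho(d_{n+m})$, this yields
$$U_{\sigma,\pi}\gspairWhittakerVector{\sigma}{\pi}{\fieldCharacter_a}=\centralCharacter{\sigma}(a)^{-n}\,\rho(d_{n+m})\,U_{\sigma,\pi}\spairWhittakerVector{\sigma}{\pi}=\centralCharacter{\sigma}(a)^{-n}\,\gammaFactor{\pi}{\sigma}\,\rho(d_{n+m})\,\spairWhittakerVector{\pi}{\sigma},$$
using the definition of $\gammaFactor{\pi}{\sigma}$ in the last step.

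On the other hand, by the definition of $\gammaFactor[\fieldCharacter_a]{\pi}{\sigma}$ we have
$$U_{\sigma,\pi}\gspairWhittakerVector{\sigma}{\pi}{\fieldCharacter_a}=\gammaFactor[\fieldCharacter_a]{\pi}{\sigma}\,\gspairWhittakerVector{\pi}{\sigma}{\fieldCharacter_a}.$$
Applying \Cref{prop:identity-of-whittaker-vector-for-different-character} with $\pi$ and $\sigma$ interchanged (and noting that $d_{n+m}=d_{m+n}$) gives
$$\gspairWhittakerVector{\pi}{\sigma}{\fieldCharacter_a}=\centralCharacter{\pi}(a)^{-m}\,\rho(d_{n+m})\,\spairWhittakerVector{\pi}{\sigma}.$$
Substituting and equating the two expressions for $U_{\sigma,\pi}\gspairWhittakerVector{\sigma}{\pi}{\fieldCharacter_a}$, the nonzero vector $\rho(d_{n+m})\spairWhittakerVector{\pi}{\sigma}$ cancels from both sides and the desired formula follows.

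There is essentially no hard step: once one grants that $U_{\sigma,\pi}$ is a $\GL_{n+m}(\finiteField)$-map and hence commutes with right translation, the argument is a two-line manipulation. The only thing to be careful about is bookkeeping of the exponents on the central characters, which arise because $d_n$ (a diagonal matrix whose entries range over powers of $a$ from $a^0$ to $a^{n-1}$) enters $\pi$, while $d_{n+m}=\diag(d_n,a^n d_m)$ produces the factor $\centralCharacter{\sigma}(a)^{-n}$ in the proposition via the $a^n$ on the $\sigma$-block, and symmetrically $\centralCharacter{\pi}(a)^{-m}$ in the swapped version. The existence of central characters for $\pi$ and $\sigma$ is used throughout; this is harmless since we have assumed it as a hypothesis in the preceding discussion.
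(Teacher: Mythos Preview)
Your proof is correct and follows essentially the same approach as the paper's own proof: both use \Cref{prop:identity-of-whittaker-vector-for-different-character} twice (once for each ordering of $\pi,\sigma$), the fact that $U_{\sigma,\pi}$ commutes with right translation, and then compare the two expressions for $U_{\sigma,\pi}\gspairWhittakerVector{\sigma}{\pi}{\fieldCharacter_a}$ to read off the scalar.
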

\begin{proof}
	By definition, we have that $$\gammaFactor[\fieldCharacter_a]{\pi}{\sigma} \gspairWhittakerVector{\pi}{\sigma}{\fieldCharacter_a} = U_{\sigma, \pi}  \gspairWhittakerVector{\sigma}{\pi}{\fieldCharacter_a}.$$	
	By \Cref{prop:identity-of-whittaker-vector-for-different-character}, $$\gammaFactor[\fieldCharacter_a]{\pi}{\sigma}  \centralCharacter{\pi}\left(a\right)^{-m}  \rho\left(d_{n+m}\right) \spairWhittakerVector{\pi}{\sigma} = \centralCharacter{\sigma}\left(a\right)^{-n}  U_{\sigma, \pi}  \rho\left(d_{n+m}\right) \spairWhittakerVector{\sigma}{\pi}.$$ 

	Therefore, we get that
	$$\gammaFactor[\fieldCharacter_a]{\pi}{\sigma} \centralCharacter{\sigma}\left(a\right)^{n} \centralCharacter{\pi}\left(a\right)^{-m} \spairWhittakerVector{\pi}{\sigma} =   U_{\sigma, \pi}  \spairWhittakerVector{\sigma}{\pi},$$
	which implies that $$\gammaFactor[\fieldCharacter_a]{\pi}{\sigma} \centralCharacter{\sigma}\left(a\right)^{n} \centralCharacter{\pi}\left(a\right)^{-m} = \gammaFactor{\pi}{\sigma},$$
	as required.
\end{proof}

\subsubsection{Relation between $\gammaFactor{\pi}{\sigma}$ and $\gammaFactor[{\fieldCharacter^{-1}}]{\contragredient{\sigma}}{\contragredient{\pi}}$}

In this section, we analyze the relation between $\gammaFactor{\pi}{\sigma}$ and $\gammaFactor{\contragredient{\sigma}}{\contragredient{\pi}}$.

Recall that for a finite dimensional representation $\tau$ of $\GL_k\left(\finiteField\right)$, we have that $\contragredient{\tau} \cong \outerInvolution{\tau}$. See \Cref{section:whittaker-models-and-bessel-functions}. If $\whittakerVector{\tau}$ is a non-zero $\fieldCharacter$-Whittaker vector for $\tau$, then $\tau\left(\weyllong_k\right) \whittakerVector{\tau}$ is a non-zero $\fieldCharacter^{-1}$-Whittaker vector for $\outerInvolution{\tau}$.

We have that $$\outerInvolution{\pi} \parabolicInduction \outerInvolution{\sigma} \cong \outerInvolution{\left(\sigma \parabolicInduction \pi\right)}$$ by the isomorphism $S_{\sigma,\pi} \colon \outerInvolution{\left(\sigma \parabolicInduction \pi\right)} \cong \outerInvolution{\pi} \parabolicInduction \outerInvolution{\sigma}$ that sends $f \in \outerInvolution{\left(\sigma \parabolicInduction \pi\right)}$ to the function
$$\left(S_{\sigma,\pi} f\right)\left(g\right) = \tilde{f}\left(\weylnm_{n,m} \outerInvolution{g} \right).$$ 

	Let $$\gspairWhittakerVector{\outerInvolution{\pi}}{\outerInvolution{\sigma}}{\fieldCharacter^{-1}} = f_{ \pi\left(\weyllong_n\right) \whittakerVector{\pi}, \sigma\left(\weyllong_m\right) \whittakerVector{\sigma}} \in \outerInvolution{\pi} \parabolicInduction \outerInvolution{\sigma}.$$
Then $\gspairWhittakerVector{\outerInvolution{\pi}}{\outerInvolution{\sigma}}{\fieldCharacter^{-1}}$ is a non-zero $\fieldCharacter^{-1}$-Whittaker vector of $\outerInvolution{\pi} \parabolicInduction \outerInvolution{\sigma}$. On the other hand, by the discussion above, a non-zero $\fieldCharacter^{-1}$-Whittaker vector of $\outerInvolution{\left(\sigma \parabolicInduction \pi\right)}$ is given by $\rho\left(\weyllong_{m+n}\right) \spairWhittakerVector{\sigma}{\pi}$, where $\rho\left(\weyllong_{m+n}\right)$ represents right translation by $\weyllong_{m+n}$. Therefore $S_{\sigma,\pi} \rho\left(\weyllong_{m+n}\right) \spairWhittakerVector{\sigma}{\pi}$ is another non-zero $\fieldCharacter^{-1}$-Whittaker vector of $\outerInvolution{\pi} \parabolicInduction \outerInvolution{\sigma}$.

\begin{proposition}We have\label{prop:pi-and-sigma-involution-whittaker-vector}
	\begin{equation}\label{eq:pi-and-sigma-involution-whittaker-vector}
		\gspairWhittakerVector{\outerInvolution{\pi}}{\outerInvolution{\sigma}}{\fieldCharacter^{-1}} = S_{\sigma,\pi} \rho\left( \weyllong_{m+n} \right) \spairWhittakerVector{\sigma}{\pi}. 
	\end{equation}
\end{proposition}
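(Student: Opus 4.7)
Both sides of the claimed identity lie in the space of $\fieldCharacter^{-1}$-Whittaker vectors of $\outerInvolution{\pi} \parabolicInduction \outerInvolution{\sigma}$. For the left-hand side this is immediate from the definition of $f_{v,w}$ applied to the $\fieldCharacter^{-1}$-Whittaker vectors $\pi(\weyllong_n)\whittakerVector{\pi}$ and $\sigma(\weyllong_m)\whittakerVector{\sigma}$. For the right-hand side, one first checks that $\rho(\weyllong_{m+n}) \spairWhittakerVector{\sigma}{\pi}$ is a $\fieldCharacter^{-1}$-Whittaker vector of $\outerInvolution{(\sigma \parabolicInduction \pi)}$ by the same elementary conjugation argument that produces a $\fieldCharacter^{-1}$-Whittaker vector of $\outerInvolution{\tau}$ from a $\fieldCharacter$-Whittaker vector of $\tau$, as used just above the statement; since $S_{\sigma,\pi} \colon \outerInvolution{(\sigma \parabolicInduction \pi)} \to \outerInvolution{\pi} \parabolicInduction \outerInvolution{\sigma}$ is an intertwining isomorphism, it carries this vector to a $\fieldCharacter^{-1}$-Whittaker vector of $\outerInvolution{\pi} \parabolicInduction \outerInvolution{\sigma}$. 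By \Cref{thm:unique-whittaker-vector-parabolic-induction} this space is one-dimensional, so the two sides agree up to a scalar and it suffices to check equality at one point where one of them is manifestly nonzero.

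The natural choice is $g = \weylnm_{m,n}$. The left-hand side evaluated there is simply
$\pi(\weyllong_n) \whittakerVector{\pi} \otimes \sigma(\weyllong_m) \whittakerVector{\sigma}$ by the defining formula for $\gspairWhittakerVector{\outerInvolution{\pi}}{\outerInvolution{\sigma}}{\fieldCharacter^{-1}}$, and this is certainly nonzero. On the other hand, unfolding the definitions of $S_{\sigma,\pi}$ and $\rho(\weyllong_{m+n})$, the right-hand side at $\weylnm_{m,n}$ equals
\[
\swapHomomorphism_{\sigma,\pi}\!\left(\spairWhittakerVector{\sigma}{\pi}\!\left(\weylnm_{n,m}\cdot\outerInvolution{\weylnm_{m,n}}\cdot\weyllong_{m+n}\right)\right).
\]
Since $\weylnm_{m,n}$ is a permutation matrix, and hence orthogonal, we have $\outerInvolution{\weylnm_{m,n}} = \weylnm_{m,n}$; combined with $\weylnm_{n,m} = \weylnm_{m,n}^{-1}$, the matrix in the argument collapses to $\weyllong_{m+n}$.

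The remaining step is the block-matrix identity
\[
\weyllong_{m+n} \;=\; \diag(\weyllong_m,\weyllong_n)\cdot \weylnm_{n,m},
\]
which exhibits $\weyllong_{m+n}$ as an element of the support cell $\parabolicGroup_{m,n}\weylnm_{n,m}\unipotentSubgroup_{n+m}$ with Levi part $\diag(\weyllong_m,\weyllong_n)\in \leviGroup_{m,n}$ and trivial unipotent part. Substituting this decomposition into the defining formula for $\spairWhittakerVector{\sigma}{\pi}$ gives $\sigma(\weyllong_m)\whittakerVector{\sigma}\otimes \pi(\weyllong_n)\whittakerVector{\pi}$, and applying $\swapHomomorphism_{\sigma,\pi}$ yields $\pi(\weyllong_n)\whittakerVector{\pi}\otimes \sigma(\weyllong_m)\whittakerVector{\sigma}$, matching the left-hand side exactly. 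The only technical obstacle is verifying this last block-matrix identity, which is a straightforward computation once one tracks the block sizes in $\weylnm_{n,m}$.
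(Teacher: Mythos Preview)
Your proof is correct and follows essentially the same route as the paper: both sides are identified as $\fieldCharacter^{-1}$-Whittaker vectors in the one-dimensional space for $\outerInvolution{\pi}\parabolicInduction\outerInvolution{\sigma}$, and equality is established by evaluating at $\weylnm_{m,n}$ using the block identity $\weyllong_{m+n}=\diag(\weyllong_m,\weyllong_n)\,\weylnm_{n,m}$. You simply spell out the intermediate simplification $\weylnm_{n,m}\cdot\outerInvolution{\weylnm_{m,n}}\cdot\weyllong_{m+n}=\weyllong_{m+n}$ more explicitly than the paper does.
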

\begin{proof}
	We have that $$S_{\sigma,\pi} \rho\left( \weyllong_{m+n} \right) \spairWhittakerVector{\sigma}{\pi}\left( \weylnm_{m,n} \right) = \swapHomomorphism_{\sigma,\pi}{\spairWhittakerVector{\sigma}{\pi}}\left(\weyllong_{m+n} \right) = \pi\left(\weyllong_n\right) \whittakerVector{\pi} \otimes \sigma\left(\weyllong_m\right) \whittakerVector{\sigma},$$
	where in the last step we used the fact that $\diag\left(\weyllong_m, \weyllong_n\right) \weylnm_{n,m} = \weyllong_{n+m}$.
	
	Since $S_{\sigma,\pi} \rho\left( \weyllong_{m+n} \right) \spairWhittakerVector{\sigma}{\pi}$ and $\gspairWhittakerVector{\outerInvolution{\pi}}{\outerInvolution{\sigma}}{\fieldCharacter^{-1}}$ are both $\fieldCharacter^{-1}$-Whittaker vectors for the representation of Whittaker type $\outerInvolution{\pi} \parabolicInduction \outerInvolution{\sigma}$, and they both agree at the point $\weylnm_{m,n}$, they are equal.
\end{proof}

Similarly, let $$\gspairWhittakerVector{\outerInvolution{\sigma}}{\outerInvolution{\pi}}{\fieldCharacter^{-1}} = f_{ \sigma\left(\weyllong_m\right) \whittakerVector{\sigma}, \pi\left(\weyllong_n\right) \whittakerVector{\pi}} \in \outerInvolution{\sigma} \parabolicInduction \outerInvolution{\pi}.$$ Using \Cref{prop:pi-and-sigma-involution-whittaker-vector} with roles of the representations $\pi$ and $\sigma$ exchanged, we have
\begin{equation}\label{eq:sigma-and-pi-involution-whittaker-vector}
	\gspairWhittakerVector{\outerInvolution{\sigma}}{\outerInvolution{\pi}}{\fieldCharacter^{-1}} = S_{\pi,\sigma} \rho\left( \weyllong_{m+n} \right) \spairWhittakerVector{\pi}{\sigma}.
\end{equation}

\begin{theorem}\label{thm:gamma-factor-of-involution-of-representations}
	Let $\pi$ and $\sigma$ be representations of Whittaker type of $\GL_n\left(\finiteField\right)$ and $\GL_m\left(\finiteField\right)$, respectively. Then
	$$ \gammaFactor{\pi}{\sigma} = \gammaFactor[{\fieldCharacter^{-1}}]{\contragredient{\sigma}}{\contragredient{\pi}}.$$
\end{theorem}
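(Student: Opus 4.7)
The plan is to reduce the identity $\gammaFactor{\pi}{\sigma} = \gammaFactor[{\fieldCharacter^{-1}}]{\contragredient{\sigma}}{\contragredient{\pi}}$ to the operator identity
\begin{equation}\label{eq:plan-commutativity}
U_{\outerInvolution{\pi},\outerInvolution{\sigma}} \circ S_{\sigma,\pi} \circ \rho\left(\weyllong_{m+n}\right) = S_{\pi,\sigma} \circ \rho\left(\weyllong_{m+n}\right) \circ U_{\sigma,\pi},
\end{equation}
as maps $\sigma \parabolicInduction \pi \to \outerInvolution{\sigma} \parabolicInduction \outerInvolution{\pi}$, with $\rho$ denoting right translation. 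Here the two $S$'s are the isomorphisms $\outerInvolution{(\sigma \parabolicInduction \pi)} \cong \outerInvolution{\pi} \parabolicInduction \outerInvolution{\sigma}$ and $\outerInvolution{(\pi \parabolicInduction \sigma)} \cong \outerInvolution{\sigma} \parabolicInduction \outerInvolution{\pi}$ introduced before \Cref{prop:pi-and-sigma-involution-whittaker-vector}.

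Granting \eqref{eq:plan-commutativity}, I evaluate both sides at $\spairWhittakerVector{\sigma}{\pi}$. The right-hand side, using the defining relation $U_{\sigma,\pi}\spairWhittakerVector{\sigma}{\pi} = \gammaFactor{\pi}{\sigma} \cdot \spairWhittakerVector{\pi}{\sigma}$ and identity \eqref{eq:sigma-and-pi-involution-whittaker-vector}, becomes $\gammaFactor{\pi}{\sigma} \cdot \gspairWhittakerVector{\outerInvolution{\sigma}}{\outerInvolution{\pi}}{\fieldCharacter^{-1}}$. The left-hand side, by \Cref{prop:pi-and-sigma-involution-whittaker-vector}, equals $U_{\outerInvolution{\pi},\outerInvolution{\sigma}}\, \gspairWhittakerVector{\outerInvolution{\pi}}{\outerInvolution{\sigma}}{\fieldCharacter^{-1}}$, and applying the definition of the Shahidi gamma factor to the pair $\left(\outerInvolution{\pi}, \outerInvolution{\sigma}\right)$ with character $\fieldCharacter^{-1}$, together with $\contragredient{\tau} \cong \outerInvolution{\tau}$, gives $\gammaFactor[\fieldCharacter^{-1}]{\contragredient{\sigma}}{\contragredient{\pi}} \cdot \gspairWhittakerVector{\outerInvolution{\sigma}}{\outerInvolution{\pi}}{\fieldCharacter^{-1}}$. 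Since $\gspairWhittakerVector{\outerInvolution{\sigma}}{\outerInvolution{\pi}}{\fieldCharacter^{-1}} \ne 0$, comparing coefficients of this Whittaker vector yields the theorem.

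To establish \eqref{eq:plan-commutativity}, I would evaluate both sides on an arbitrary $f \in \sigma \parabolicInduction \pi$ at $g \in \GL_{n+m}\left(\finiteField\right)$. Each side becomes a sum over a unipotent radical of values of $f$, post-composed with $\swapHomomorphism_{\pi,\sigma} \circ \swapHomomorphism_{\sigma,\pi} = \identityMap$, so the swap homomorphisms cancel. The matching of the two sums reduces to the matrix identity $\outerInvolution{\weylnm_{n,m}} = \weylnm_{n,m}$ (which follows from $\weylnm_{n,m}^{-1} = \weylnm_{m,n} = \transpose{\weylnm_{n,m}}$) and to the bijection $\unipotentRadical_{n,m} \xrightarrow{\sim} \unipotentRadical_{m,n}$ induced by conjugation by $\weyllong_{m+n}$ followed by $\iota$; any leftover factor from the $\parabolicGroup$-equivariance of $f$ cancels in the same fashion as in the proof of \Cref{prop:pi-and-sigma-involution-whittaker-vector}. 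The main technical obstacle is precisely this verification — bookkeeping Weyl elements, unipotent radicals, and the outer involution to show that the two sides agree term by term — while the remainder of the argument is a direct consequence of the propositions preceding the theorem.
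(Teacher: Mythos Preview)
Your proposal is correct and follows essentially the same route as the paper's proof: both reduce the theorem to the operator identity relating $U_{\outerInvolution{\pi},\outerInvolution{\sigma}}$ and $U_{\sigma,\pi}$ through the isomorphisms $S_{\sigma,\pi}$, $S_{\pi,\sigma}$, and then read off the equality of gamma factors by applying \Cref{prop:pi-and-sigma-involution-whittaker-vector} and \eqref{eq:sigma-and-pi-involution-whittaker-vector}. The only cosmetic difference is that the paper states the cleaner identity $U_{\outerInvolution{\pi},\outerInvolution{\sigma}} \circ S_{\sigma,\pi} = S_{\pi,\sigma} \circ U_{\sigma,\pi}$ without the factors $\rho(\weyllong_{m+n})$; your version \eqref{eq:plan-commutativity} is equivalent to this since $U_{\sigma,\pi}$ is a $\GL_{n+m}(\finiteField)$-intertwiner and hence commutes with right translation, so $\rho(\weyllong_{m+n})$ can be cancelled from both sides.
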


\begin{proof}
	By definition,
	\begin{equation}\label{eq:intertwining-operator-for-involution-sigma-pi}
		U_{\outerInvolution{\pi}, \outerInvolution{\sigma}} \gspairWhittakerVector{\outerInvolution{\pi}}{\outerInvolution{\sigma}}{\fieldCharacter^{-1}} = \gammaFactor[\fieldCharacter^{-1}]{\outerInvolution{\sigma}}{\outerInvolution{\pi}} \cdot \gspairWhittakerVector{\outerInvolution{\sigma}}{\outerInvolution{\pi}}{\fieldCharacter^{-1}}.		
	\end{equation}
	Substituting \eqref{eq:pi-and-sigma-involution-whittaker-vector} and \eqref{eq:sigma-and-pi-involution-whittaker-vector} in \eqref{eq:intertwining-operator-for-involution-sigma-pi}, we get
	$$ U_{\outerInvolution{\pi}, \outerInvolution{\sigma}} S_{\sigma,\pi} \rho\left( \weyllong_{m+n} \right) \spairWhittakerVector{\sigma}{\pi} = \gammaFactor[\fieldCharacter^{-1}]{\outerInvolution{\sigma}}{\outerInvolution{\pi}} \cdot S_{\pi,\sigma} \rho\left( \weyllong_{m+n} \right) \spairWhittakerVector{\pi}{\sigma}.$$
	A simple computation shows that
	$$ U_{\outerInvolution{\pi}, \outerInvolution{\sigma}} \circ S_{\sigma,\pi} = S_{\pi,\sigma} \circ U_{\sigma, \pi}.$$
	Hence, we get that
	$$ S_{\pi,\sigma} \rho\left(\weyllong_{m+n} \right) U_{\sigma,\pi}  \spairWhittakerVector{\sigma}{\pi} = \gammaFactor[\fieldCharacter^{-1}]{\outerInvolution{\sigma}}{\outerInvolution{\pi}} \cdot S_{\pi,\sigma} \rho\left( \weyllong_{m+n} \right) \spairWhittakerVector{\pi}{\sigma},$$
	which implies that
	$$ U_{\sigma,\pi}  \spairWhittakerVector{\sigma}{\pi} = \gammaFactor[\fieldCharacter^{-1}]{\outerInvolution{\sigma}}{\outerInvolution{\pi}} \cdot \spairWhittakerVector{\pi}{\sigma}.$$
	Therefore, we must have $$\gammaFactor[\fieldCharacter^{-1}]{\outerInvolution{\sigma}}{\outerInvolution{\pi}} = \gammaFactor{\pi}{\sigma},$$
	and the statement in the theorem follows, since $\outerInvolution{\sigma} \cong \contragredient{\sigma}$ and $\outerInvolution{\pi} \cong \contragredient{\pi}$.
\end{proof}

Combining \Cref{thm:gamma-factor-of-involution-of-representations} with \Cref{thm:change-of-additive-character}, we get the following corollary.
\begin{corollary} \label{cor:relation-with-contragredient}
	Let $\pi$ and $\sigma$ be representations of Whittaker type of $\GL_n\left(\finiteField\right)$ and $\GL_m\left(\finiteField\right)$, respectively. Assume that both $\pi$ and $\sigma$ have central characters, and denote them by $\centralCharacter{\pi}$ and $\centralCharacter{\sigma}$, respectively. Then
	$$ \gammaFactor{\pi}{\sigma} = \gammaFactor{\contragredient{\sigma}}{\contragredient{\pi}} \cdot \centralCharacter{\pi}\left(-1\right)^m \centralCharacter{\sigma}\left(-1\right)^n.$$
\end{corollary}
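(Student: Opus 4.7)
The plan is to combine the two preceding results, \Cref{thm:gamma-factor-of-involution-of-representations} and \Cref{thm:change-of-additive-character}, via the observation that the inverse character $\fieldCharacter^{-1}$ is precisely $\fieldCharacter_{-1}$ in the notation of \Cref{subsubsection:dependence-on-character}.

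First, I would invoke \Cref{thm:gamma-factor-of-involution-of-representations} to write
\[
\gammaFactor{\pi}{\sigma} = \gammaFactor[\fieldCharacter^{-1}]{\contragredient{\sigma}}{\contragredient{\pi}}.
\]
Since $\fieldCharacter^{-1}(x) = \fieldCharacter(-x) = \fieldCharacter_{-1}(x)$, I can apply \Cref{thm:change-of-additive-character} with $a = -1$, where the roles of the input representations are played by $\contragredient{\sigma}$ (a representation of $\GL_m\left(\finiteField\right)$) and $\contragredient{\pi}$ (a representation of $\GL_n\left(\finiteField\right)$). This gives
\[
\gammaFactor[\fieldCharacter^{-1}]{\contragredient{\sigma}}{\contragredient{\pi}} = \centralCharacter{\contragredient{\sigma}}(-1)^n \cdot \centralCharacter{\contragredient{\pi}}(-1)^{-m} \cdot \gammaFactor{\contragredient{\sigma}}{\contragredient{\pi}}.
\]

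The last step is to simplify the central character factors. Since the contragredient has the inverse central character, $\centralCharacter{\contragredient{\pi}} = \centralCharacter{\pi}^{-1}$ and $\centralCharacter{\contragredient{\sigma}} = \centralCharacter{\sigma}^{-1}$. Evaluating at $-1$, the values $\centralCharacter{\pi}(-1)$ and $\centralCharacter{\sigma}(-1)$ are both $\pm 1$ (since $\centralCharacter{\tau}(-1)^2 = \centralCharacter{\tau}(1) = 1$), so they coincide with their inverses. Thus $\centralCharacter{\contragredient{\sigma}}(-1)^n = \centralCharacter{\sigma}(-1)^n$ and $\centralCharacter{\contragredient{\pi}}(-1)^{-m} = \centralCharacter{\pi}(-1)^m$. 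Chaining the equalities yields
\[
\gammaFactor{\pi}{\sigma} = \gammaFactor{\contragredient{\sigma}}{\contragredient{\pi}} \cdot \centralCharacter{\pi}(-1)^m \centralCharacter{\sigma}(-1)^n,
\]
which is the desired identity.

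There is no real obstacle here: once \Cref{thm:gamma-factor-of-involution-of-representations} and \Cref{thm:change-of-additive-character} are available, the corollary is a bookkeeping exercise. The only mild subtlety is the sign convention for the exponents of the central characters in \Cref{thm:change-of-additive-character}, which must be tracked carefully when interchanging the roles of the two representations, and the harmless simplification that $\centralCharacter{\tau}(-1) = \centralCharacter{\tau}(-1)^{-1}$.
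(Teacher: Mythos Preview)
Your proof is correct and follows exactly the approach the paper intends: the corollary is stated immediately after \Cref{thm:gamma-factor-of-involution-of-representations} as a direct combination of that theorem with \Cref{thm:change-of-additive-character}, and your argument carries out precisely this combination with the correct bookkeeping of exponents.
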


\subsection{Multiplicativity of Gamma factors}\label{subsection:multiplicativity-of-gamma-factors}

In this section, we show that $\gammaFactor{\pi}{\sigma}$ is multiplicative.

Let $\pi$ be a representation of Whittaker type of $\GL_n\left(\finiteField\right)$. Let $m = m_1 + m_2$, and let $\sigma_1$ and $\sigma_2$ be representations of Whittaker type of $\GL_{m_1}\left(\finiteField\right)$ and $\GL_{m_2}\left(\finiteField\right)$, respectively. By \Cref{thm:unique-whittaker-vector-parabolic-induction}, the parabolic induction $\sigma_1 \parabolicInduction \sigma_2$ is also a representation of Whittaker type. Hence, the gamma factor $\gammaFactor{\pi}{(\sigma_1 \parabolicInduction \sigma_2)}$ is well defined. We will show the following theorem.

\begin{theorem}\label{thm:multiplicitavity-of-gamma-factors}
	$$ \gammaFactor{\pi}{(\sigma_1 \parabolicInduction \sigma_2)} = \gammaFactor{\pi_1}{\sigma_1} \cdot \gammaFactor{\pi_2}{\sigma_2}. $$
\end{theorem}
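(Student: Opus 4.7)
The plan is to deduce the multiplicativity from a cocycle-type factorization of the intertwining operator $U_{\sigma_1 \parabolicInduction \sigma_2, \pi}$ into two simpler intertwining operators, each of which contributes one of the factors $\gammaFactor{\pi}{\sigma_i}$ via the defining relation of the Shahidi gamma factor.

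The first step is to pass to the three-fold picture via induction in stages. Under the canonical isomorphisms
\[
    \left(\sigma_1 \parabolicInduction \sigma_2\right) \parabolicInduction \pi \;\cong\; \sigma_1 \parabolicInduction \sigma_2 \parabolicInduction \pi \quad \text{and} \quad \pi \parabolicInduction \left(\sigma_1 \parabolicInduction \sigma_2\right) \;\cong\; \pi \parabolicInduction \sigma_1 \parabolicInduction \sigma_2,
\]
one obtains representations of $\GL_{m_1+m_2+n}\left(\finiteField\right)$ that are of Whittaker type by \Cref{thm:unique-whittaker-vector-parabolic-induction}, each admitting a unique $\fieldCharacter$-Whittaker vector up to scalar. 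A direct support-and-value computation at the relevant Weyl element shows that the unique Whittaker vectors of the three-fold inductions correspond exactly to $\spairWhittakerVector{\sigma_1 \parabolicInduction \sigma_2}{\pi}$ and $\spairWhittakerVector{\pi}{\sigma_1 \parabolicInduction \sigma_2}$ under these isomorphisms.

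Second, I would factor the intertwining operator. The Weyl element $\weylnm_{n, m_1 + m_2}$ that defines $U_{\sigma_1 \parabolicInduction \sigma_2, \pi}$ admits the block decomposition
\[
    \weylnm_{n, m_1 + m_2} = \diag\left(\identityMatrix{m_1}, \weylnm_{n, m_2}\right) \cdot \diag\left(\weylnm_{n, m_1}, \identityMatrix{m_2}\right),
\]
and the unipotent radical $\unipotentRadical_{m_1+m_2, n}$ splits compatibly as a product of two unipotent subgroups corresponding to the two adjacent swaps, of $\pi$ with $\sigma_2$ and then of $\pi$ with $\sigma_1$. A Fubini rearrangement of the defining sum of $U_{\sigma_1 \parabolicInduction \sigma_2, \pi}$ yields, via the induction-in-stages identifications, the cocycle identity
\[
    U_{\sigma_1 \parabolicInduction \sigma_2, \pi} \;=\; \bigl(U_{\sigma_1, \pi} \parabolicInduction \identityMap_{\sigma_2}\bigr) \circ \bigl(\identityMap_{\sigma_1} \parabolicInduction U_{\sigma_2, \pi}\bigr),
\]
where each factor denotes the natural intertwining operator induced from the corresponding two-term operator acting on an adjacent pair, passing through the intermediate three-fold picture $\sigma_1 \parabolicInduction \pi \parabolicInduction \sigma_2$.

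Third, I would track the Whittaker vector through this composition. The inner operator $\identityMap_{\sigma_1} \parabolicInduction U_{\sigma_2, \pi}$ acts by $U_{\sigma_2, \pi}$ in the last two slots; by the defining identity $U_{\sigma_2, \pi} \spairWhittakerVector{\sigma_2}{\pi} = \gammaFactor{\pi}{\sigma_2} \cdot \spairWhittakerVector{\pi}{\sigma_2}$, together with uniqueness of the $\fieldCharacter$-Whittaker vector in $\sigma_1 \parabolicInduction \pi \parabolicInduction \sigma_2$, it sends the Whittaker vector of $\sigma_1 \parabolicInduction \sigma_2 \parabolicInduction \pi$ to $\gammaFactor{\pi}{\sigma_2}$ times that of $\sigma_1 \parabolicInduction \pi \parabolicInduction \sigma_2$. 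An analogous argument for the outer operator $U_{\sigma_1, \pi} \parabolicInduction \identityMap_{\sigma_2}$ contributes an additional factor $\gammaFactor{\pi}{\sigma_1}$. Comparing the resulting identity with the definition of $\gammaFactor{\pi}{(\sigma_1 \parabolicInduction \sigma_2)}$ yields the desired multiplicativity.

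The main obstacle is the careful bookkeeping in the third step: one must rigorously verify that the Whittaker vector of each three-fold induction is taken, under the induced intertwining operator, to a specific scalar multiple of the Whittaker vector of the next three-fold induction, with the scalar equal to the correct two-term gamma factor. Concretely, this reduces to evaluating both the input Whittaker vector and the candidate output at a suitable representative in the support of each, where both sides collapse to a tensor of Whittaker vectors of the two-term constituents; the defining relation of the smaller gamma factor then pins down the scalar, and uniqueness of the Whittaker vector in the three-fold induction upgrades this pointwise identity to an identity of vectors.
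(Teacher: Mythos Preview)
Your proposal is correct and follows essentially the same route as the paper: pass to the three-fold inductions via transitivity, factor the intertwining operator as the composition of the two adjacent swaps (the paper records this as $\tilde{U}_{\sigma_1 \parabolicInduction \sigma_2, \pi} = \tilde{U}_{\sigma_1, \pi} \circ \tilde{U}_{\sigma_2, \pi}$ after flattening via explicit commutative diagrams), and then track the canonical Whittaker vector through the two factors to pick up $\gammaFactor{\pi}{\sigma_2}$ and then $\gammaFactor{\pi}{\sigma_1}$. The only slip is notational: the sum defining $U_{\sigma_1 \parabolicInduction \sigma_2, \pi}$ runs over $\unipotentRadical_{n,\,m_1+m_2}$, not $\unipotentRadical_{m_1+m_2,\,n}$, but your Weyl-element decomposition and the resulting splitting of the unipotent integration are exactly those used in the paper.
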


The proof of this theorem will occupy the remaining subsections of this section.

\begin{remark}
	Let $\sigma \subset \sigma_1 \parabolicInduction \sigma_2$ be the unique irreducible generic subrepresentation of $\sigma_1 \parabolicInduction \sigma_2$. We have that $\fieldCharacter$-Whittaker vectors of $\sigma$ are the same as $\fieldCharacter$-Whittaker vectors of $\sigma_1 \parabolicInduction \sigma_2$. Hence, \Cref{thm:multiplicitavity-of-gamma-factors} implies that $$\gammaFactor{\pi}{\sigma} = \gammaFactor{\pi}{\sigma_1} \cdot \gammaFactor{\pi}{\sigma_2}.$$
\end{remark}

Before proving the theorem, we mention two other multiplicative properties that follow immediately from the theorem.

The first property is that the gamma factor is also multiplicative in the first variable. This follows from \Cref{thm:multiplicitavity-of-gamma-factors} combined with \Cref{thm:gamma-factor-of-involution-of-representations}.
\begin{corollary}Let $\pi_1$ and $\pi_2$ be representations of Whittaker type of $\GL_{n_1}\left(\finiteField\right)$ and $\GL_{n_2}\left(\finiteField\right)$, respectively, and let $\sigma$ be a representation of Whittaker type of $\GL_m\left(\finiteField\right)$. Then
	$$\gammaFactor{(\pi_1 \parabolicInduction \pi_2)}{\sigma} = \gammaFactor{\pi_1}{\sigma} \cdot \gammaFactor{\pi_2}{\sigma}.$$
\end{corollary}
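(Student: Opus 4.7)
The plan is to derive multiplicativity in the first variable from multiplicativity in the second variable (\Cref{thm:multiplicitavity-of-gamma-factors}) by passing through the contragredient identity \Cref{thm:gamma-factor-of-involution-of-representations}. Concretely, I would apply \Cref{thm:gamma-factor-of-involution-of-representations} to move the parabolically induced representation from the first slot to the second:
$$ \gammaFactor{(\pi_1 \parabolicInduction \pi_2)}{\sigma} = \gammaFactor[\fieldCharacter^{-1}]{\contragredient{\sigma}}{\contragredient{(\pi_1 \parabolicInduction \pi_2)}}. $$

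Next, I would invoke the fact that for finite groups, taking contragredients commutes with induction from a subgroup, so that $\contragredient{(\pi_1 \parabolicInduction \pi_2)} \cong \contragredient{\pi_1} \parabolicInduction \contragredient{\pi_2}$ as representations of $\GL_{n_1+n_2}(\finiteField)$; note in particular that both sides are of Whittaker type by \Cref{thm:unique-whittaker-vector-parabolic-induction}, and the Shahidi gamma factor depends only on the isomorphism class of its arguments (since $U_{\sigma,\pi}$ and the relevant Whittaker vectors are constructed intrinsically from abstract Whittaker data). Then by \Cref{thm:multiplicitavity-of-gamma-factors} applied with $\contragredient{\sigma}$ in the first slot and $\fieldCharacter^{-1}$ in place of $\fieldCharacter$,
$$ \gammaFactor[\fieldCharacter^{-1}]{\contragredient{\sigma}}{(\contragredient{\pi_1} \parabolicInduction \contragredient{\pi_2})} = \gammaFactor[\fieldCharacter^{-1}]{\contragredient{\sigma}}{\contragredient{\pi_1}} \cdot \gammaFactor[\fieldCharacter^{-1}]{\contragredient{\sigma}}{\contragredient{\pi_2}}. $$
Finally, I would apply \Cref{thm:gamma-factor-of-involution-of-representations} once more to each factor on the right, which identifies $\gammaFactor[\fieldCharacter^{-1}]{\contragredient{\sigma}}{\contragredient{\pi_i}} = \gammaFactor{\pi_i}{\sigma}$, yielding the claimed product decomposition.

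No step is particularly delicate; the main point that requires a sentence of justification is the identity $\contragredient{(\pi_1 \parabolicInduction \pi_2)} \cong \contragredient{\pi_1} \parabolicInduction \contragredient{\pi_2}$, which is a standard consequence of Frobenius reciprocity for finite groups applied to the parabolic subgroup $\parabolicGroup_{n_1,n_2}$ (using that $(\pi_1 \parabolicTensor \pi_2)^{\vee} \cong \contragredient{\pi_1} \parabolicTensor \contragredient{\pi_2}$ as representations of $\parabolicGroup_{n_1,n_2}$, since the inflation is through the Levi quotient). Everything else is a symbolic substitution.
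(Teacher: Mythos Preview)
Your proposal is correct and follows exactly the approach indicated in the paper, which states that this corollary ``follows from \Cref{thm:multiplicitavity-of-gamma-factors} combined with \Cref{thm:gamma-factor-of-involution-of-representations}.'' You have simply spelled out the intermediate steps (including the standard isomorphism $\contragredient{(\pi_1 \parabolicInduction \pi_2)} \cong \contragredient{\pi_1} \parabolicInduction \contragredient{\pi_2}$) that the paper leaves implicit.
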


The second corollary allows us to express the gamma factor of two parabolically induced representations as the product of the gamma factors of the components of the parabolic induction. It follows by repeatedly using multiplicativity in both variables.

\begin{corollary}\label{cor:full-parabolic-induction-multiplicativity-property}
	Let $\pi_1, \dots, \pi_r$ and $\sigma_1, \dots, \sigma_t$ be irreducible generic representations of $\GL_{n_1}\left(\finiteField\right), \dots, \GL_{n_r}\left(\finiteField\right)$ and $\GL_{m_1}\left(\finiteField\right), \dots, \GL_{m_t}\left(\finiteField\right)$, respectively. Then \begin{enumerate}
		\item  We have $$ \gammaFactor{(\pi_1 \parabolicInduction \dots \parabolicInduction \pi_r)}{(\sigma_1 \parabolicInduction \dots \parabolicInduction \sigma_t)} = \prod_{i=1}^r \prod_{j=1}^t \gammaFactor{\pi_i}{\sigma_j}.$$
		\item If $\pi$ is the unique irreducible generic subrepresentation of $\pi_1 \parabolicInduction \dots \parabolicInduction \pi_r$ and $\sigma$ is the unique irreducible generic subrepresentation of $\sigma_1 \parabolicInduction \dots \parabolicInduction \sigma_t$, then
		$$ \gammaFactor{\pi}{\sigma} = \prod_{i=1}^r \prod_{j=1}^t \gammaFactor{\pi_i}{\sigma_j}. $$
	\end{enumerate}
\end{corollary}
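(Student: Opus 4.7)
The plan is to deduce part (1) from the two multiplicativity properties already in hand (multiplicativity in the second variable, Theorem \ref{thm:multiplicitavity-of-gamma-factors}, and multiplicativity in the first variable, the Corollary stated just before), and then to derive part (2) from part (1) via the observation (already recorded in the Remark following Theorem \ref{thm:multiplicitavity-of-gamma-factors}) that the space of $\fieldCharacter$-Whittaker vectors of a parabolically induced representation of Whittaker type is contained in its unique irreducible generic subrepresentation.

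For part (1), set $T = \sigma_1 \parabolicInduction \dots \parabolicInduction \sigma_t$. By iterated application of Theorem \ref{thm:unique-whittaker-vector-parabolic-induction} (and the standard associativity of parabolic induction, which allows one to group factors in any desired way), both $T$ and $\pi_1 \parabolicInduction \dots \parabolicInduction \pi_r$ are of Whittaker type, so every gamma factor appearing in the argument is well defined. Induction on $r$ using the multiplicativity-in-the-first-variable Corollary, viewing $T$ as a single representation of Whittaker type, yields
$$\gammaFactor{(\pi_1 \parabolicInduction \dots \parabolicInduction \pi_r)}{T} = \prod_{i=1}^r \gammaFactor{\pi_i}{T}.$$
Similarly, induction on $t$ using Theorem \ref{thm:multiplicitavity-of-gamma-factors} yields $\gammaFactor{\pi_i}{T} = \prod_{j=1}^t \gammaFactor{\pi_i}{\sigma_j}$ for each $i$. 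Multiplying the two formulas gives the identity in part (1).

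For part (2), write $P = \pi_1 \parabolicInduction \dots \parabolicInduction \pi_r$ and $S = \sigma_1 \parabolicInduction \dots \parabolicInduction \sigma_t$. Both $P$ and $S$ are of Whittaker type by Theorem \ref{thm:unique-whittaker-vector-parabolic-induction}, so each has a one-dimensional space of $\fieldCharacter$-Whittaker vectors; this space must sit inside the unique irreducible generic subrepresentation, forcing the Whittaker spaces of $\pi$ and $P$ (respectively, of $\sigma$ and $S$) to coincide. The inclusions $\pi \hookrightarrow P$ and $\sigma \hookrightarrow S$ give, via the functoriality of parabolic induction, inclusions $\sigma \parabolicInduction \pi \hookrightarrow S \parabolicInduction P$ and $\pi \parabolicInduction \sigma \hookrightarrow P \parabolicInduction S$. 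Because the Whittaker vector $\pairWhittakerVector{\sigma}{\pi}$ is defined by its value at $\weylnm_{n,m}$ together with extension by zero off the open cell, the inclusion carries $\spairWhittakerVector{\sigma}{\pi}$ (built from $\whittakerVector{\pi} \in \pi \subset P$ and $\whittakerVector{\sigma} \in \sigma \subset S$) to the corresponding vector for $S \parabolicInduction P$, and similarly for $\spairWhittakerVector{\pi}{\sigma}$. The defining sum for $U_{S,P}$ restricts on this subspace to the defining sum for $U_{\sigma,\pi}$. Hence $\gammaFactor{\pi}{\sigma} = \gammaFactor{P}{S}$, and part (1) finishes the proof.

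The proof is essentially bookkeeping; the genuine content has already been absorbed into the two multiplicativity statements and into Theorem \ref{thm:unique-whittaker-vector-parabolic-induction}. The only step requiring care is verifying that, under the inclusions $\sigma \parabolicInduction \pi \hookrightarrow S \parabolicInduction P$ and $\pi \parabolicInduction \sigma \hookrightarrow P \parabolicInduction S$, the specific Whittaker vectors used to define the gamma factor at the smaller and larger levels agree, and that $U_{S,P}$ genuinely restricts to $U_{\sigma,\pi}$ — both points follow directly from the explicit formulas, but must be stated to make the identification of gamma factors rigorous.
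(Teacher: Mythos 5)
Your proof is correct and takes essentially the same approach as the paper, which simply states that the corollary ``follows by repeatedly using multiplicativity in both variables.'' You usefully spell out, in part (2), the compatibility of the chosen Whittaker vectors and the intertwining operator with the inclusions $\sigma\parabolicInduction\pi\hookrightarrow S\parabolicInduction P$ and $\pi\parabolicInduction\sigma\hookrightarrow P\parabolicInduction S$, a point the paper leaves implicit in the remark following \Cref{thm:multiplicitavity-of-gamma-factors}.
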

In the next subsections we make preparations for the proof of \Cref{thm:multiplicitavity-of-gamma-factors}.

\subsubsection{Transitivity of parabolic induction}

Let $\tau_1, \tau_2$ and $\tau_3$ be finite dimensional representations of $\GL_{n_1}\left(\finiteField\right)$, $\GL_{n_2}\left(\finiteField\right)$ and $\GL_{n_3}\left(\finiteField\right)$, respectively.

We realize elements in $\left(\tau_1 \parabolicInduction \tau_2\right) \otimes \tau_3$ as functions $\GL_{n_1 + n_2}\left(\finiteField\right) \rightarrow \tau_1 \otimes \tau_2 \otimes \tau_3$ in the obvious way. Similarly, we realize elements in $\tau_1 \otimes \left(\tau_2 \parabolicInduction \tau_3\right)$ as functions $\GL_{n_2 + n_3} \rightarrow \tau_1 \otimes \tau_2 \otimes \tau_3$ in the obvious way.

Consider the space $\left(\tau_1 \parabolicInduction \tau_2\right) \parabolicInduction \tau_3$. We will regard elements of this space as functions $$f \colon \GL_{n_1+n_2+n_3}\left(\finiteField\right) \times \GL_{n_1 + n_2}\left(\finiteField\right) \rightarrow \tau_1 \otimes \tau_2 \otimes \tau_3,$$ where $f\left(g;h\right)$ means evaluating $f$ at $g \in \GL_{n_1 + n_2 + n_3}\left(\finiteField\right)$ and then evaluating the resulting function at $h \in \GL_{n_1 + n_2}\left(\finiteField\right)$. We will similarly regard elements of  $\tau_1 \parabolicInduction \left(\tau_2 \parabolicInduction \tau_3\right)$ as functions  $$f \colon \GL_{n_1+n_2+n_3}\left(\finiteField\right) \times \GL_{n_2 + n_3}\left(\finiteField\right) \rightarrow \tau_1 \otimes \tau_2 \otimes \tau_3.$$

We have an isomorphism of representations $$L_{\tau_1 , \tau_2 ; \tau_3} \colon \left(\tau_1 \parabolicInduction \tau_2\right) \parabolicInduction \tau_3 \rightarrow \tau_1 \parabolicInduction \tau_2 \parabolicInduction \tau_3,$$ given by mapping a function $f \in (\tau_1 \parabolicInduction \tau_2) \parabolicInduction \tau_3$ to 
$$L_{\tau_1 , \tau_2 ; \tau_3} f\left(g\right) = f\left(g; \identityMatrix{n_1 + n_2}\right),$$
where $g \in \GL_{n_1+n_2+n_3}\left(\finiteField\right)$.

Similarly, we have an isomorphism of representations $$L_{\tau_1; \tau_2, \tau_3} \colon \tau_1 \parabolicInduction \left(\tau_2 \parabolicInduction \tau_3\right) \rightarrow \tau_1 \parabolicInduction \tau_2 \parabolicInduction \tau_3,$$ given by mapping a function $f \in \tau_1 \parabolicInduction  (\tau_2 \parabolicInduction \tau_3) $ to $$L_{\tau_1; \tau_2, \tau_3} f \left(g\right) = f\left(g; \identityMatrix{n_2 + n_3}\right),$$
where again $g \in \GL_{n_1+n_2+n_3}\left(\finiteField\right)$.

Assume now that $\tau_1$, $\tau_2$ and $\tau_3$ have non-zero $\fieldCharacter$-Whittaker vectors, $\whittakerVector{\tau_1}$, $\whittakerVector{\tau_2}$ and $\whittakerVector{\tau_3}$, respectively, and assume that up to scalar multiplication, these Whittaker vectors are unique. We denote, as before, the following non-zero $\fieldCharacter$-Whittaker vectors $\spairWhittakerVector{\tau_1}{\tau_2} = \pairWhittakerVector{\tau_1}{\tau_2} \in \tau_1 \parabolicInduction \tau_2$ and $\spairWhittakerVector{\tau_2}{\tau_3} = \pairWhittakerVector{\tau_2}{\tau_3} \in \tau_2 \parabolicInduction \tau_3$. We also define the following non-zero $\fieldCharacter$-Whittaker vectors   $\spairWhittakerVector{\tau_1}{\tau_2 \parabolicInduction \tau_3} = f_{{\whittakerVector{\tau_1}},{\spairWhittakerVector{\tau_2}{\tau_3}}} \in \tau_1 \parabolicInduction \left(\tau_2 \parabolicInduction \tau_3\right)$ and $\spairWhittakerVector{\tau_1 \parabolicInduction \tau_2}{\tau_3} = f_{\spairWhittakerVector{\tau_1}{\tau_2}, \whittakerVector{\tau_3}} \in \left(\tau_1 \parabolicInduction \tau_2\right) \parabolicInduction \tau_3$. Finally, we define $$\tripleWhittakerVector{\tau_1}{\tau_2}{\tau_3} = L_{\tau_1;\tau_2,\tau_3} \spairWhittakerVector{\tau_1}{\tau_2 \parabolicInduction \tau_3} = L_{\tau_1, \tau_2; \tau_3} \spairWhittakerVector{\tau_1 \parabolicInduction \tau_2}{\tau_3} \in \tau_1 \parabolicInduction \tau_2 \parabolicInduction \tau_3.$$
Then $\tripleWhittakerVector{\tau_1}{\tau_2}{\tau_3}$ is the $\fieldCharacter$-Whittaker vector in $\tau_1 \parabolicInduction \tau_2 \parabolicInduction \tau_3$ supported on the double coset $\parabolicGroup_{n_1,n_2,n_3} \weylnm_{n_3,n_2,n_1} \unipotentSubgroup_{n_1 + n_2 + n_3},$ with $\tripleWhittakerVector{\tau_1}{\tau_2}{\tau_3}\left( \weylnm_{n_3,n_2,n_1} \right) = \whittakerVector{\tau_1} \otimes \whittakerVector{\tau_2} \otimes \whittakerVector{\tau_3}$, where $$\weylnm_{n_3,n_2,n_1} = \begin{pmatrix}
& & \identityMatrix{n_1}\\
& \identityMatrix{n_2}\\
\identityMatrix{n_3}
\end{pmatrix}.$$

\subsubsection{Intertwining operators}

We return to the notations of the beginning of this section. Let $\pi$ be a representation of Whittaker type of $\GL_n\left(\finiteField\right)$. Let $m = m_1 + m_2$, and let $\sigma_1$ and $\sigma_2$ be representations of Whittaker type of $\GL_{m_1}\left(\finiteField\right)$ and $\GL_{m_2}\left(\finiteField\right)$, respectively.

Using the isomorphisms from the previous section, we obtain maps such that the following diagrams are commutative.

\begin{equation}\label{eq:diagram-flatten-sigma1}
	\xymatrix{\sigma_1 \parabolicInduction \left(\sigma_2 \parabolicInduction \pi\right) \ar[rr]^{\identityMap_{\sigma_1} \otimes U_{\sigma_2, \pi}} \ar[d]^{L_{\sigma_1;\sigma_2,\pi}} & & \sigma_1 \parabolicInduction \left(\pi \parabolicInduction \sigma_2\right) \ar[d]^{L_{\sigma_1;\pi,\sigma_2}} \\
		\sigma_1 \parabolicInduction \sigma_2 \parabolicInduction \pi \ar[rr]^{\tilde{U}_{\sigma_2, \pi}} && \sigma_1 \parabolicInduction \pi \parabolicInduction \sigma_2
	},
\end{equation}

\begin{equation}\label{eq:diagram-flatten-sigma2}
	\xymatrix{ \left(\sigma_1 \parabolicInduction \pi\right) \parabolicInduction \sigma_2  \ar[rr]^{U_{\sigma_1, \pi } \otimes \identityMap_{\sigma_2}} \ar[d]^{L_{\sigma_1,\pi;\sigma_2}} && \left(\pi \parabolicInduction \sigma_1\right) \parabolicInduction \sigma_2 \ar[d]^{L_{\pi,\sigma_1;\sigma_2}} \\
		\sigma_1 \parabolicInduction \pi \parabolicInduction \sigma_2  \ar[rr]^{\tilde{U}_{\sigma_1, \pi }} && \pi \parabolicInduction \sigma_1 \parabolicInduction \sigma_2 },
\end{equation}
\begin{equation}\label{eq:diagram-flatten-pi}
	\xymatrix{\left(\sigma_1 \parabolicInduction \sigma_2\right) \parabolicInduction \pi  \ar[d]^{L_{\sigma_1,\sigma_2;\pi}} \ar[rr]^{U_{\sigma_1 \parabolicInduction \sigma_2, \pi}} & & \pi \parabolicInduction \left(\sigma_1 \parabolicInduction \sigma_2\right) \ar[d]^{L_{\pi;\sigma_1,\sigma_2}}  \\
		\sigma_1 \parabolicInduction \sigma_2 \parabolicInduction \pi \ar[rr]^{\tilde{U}_{\sigma_1 \parabolicInduction \sigma_2, \pi}} & & \pi \parabolicInduction \sigma_1 \parabolicInduction \sigma_2}.
\end{equation}

Let us explain these diagrams. We begin with explaining \eqref{eq:diagram-flatten-sigma1}. The map $\identityMap_{\sigma_1} \otimes U_{\sigma_2, \pi} \colon \sigma_1 \otimes \left(\sigma_2 \parabolicInduction \pi\right) \rightarrow \sigma_1 \otimes \left(\pi \parabolicInduction \sigma_2\right)$ is a homomorphism of representations. It defines a homomorphism $\sigma_1 \parabolicInduction \left(\sigma_2 \parabolicInduction \pi\right) \rightarrow \sigma_1 \parabolicInduction \left(\pi \parabolicInduction \sigma_2\right)$, which we keep denoting by $\identityMap_{\sigma_1} \otimes U_{\sigma_2, \pi}$. By unwrapping the definitions, we see that the map $\tilde{U}_{\sigma_2, \pi} \colon \sigma_1 \parabolicInduction \sigma_2 \parabolicInduction \pi \rightarrow \sigma_1 \parabolicInduction \pi \parabolicInduction \sigma_2$ is given by the formula
\begin{equation}\label{eq:sigma2-intertwining-operator}
	\tilde{U}_{\sigma_2,\pi}\left(f\right)\left(g\right) = \sum_{u_{n,m_2} \in \unipotentRadical_{n,m_2}} \mathrm{sw}_{\sigma_2, \pi}f\left(\begin{pmatrix}
		I_{m_1} &\\
		& & I_{m_2}\\
		& I_n
	\end{pmatrix} \begin{pmatrix}
		I_{m_1} &\\
		& u_{n,m_2}
	\end{pmatrix} g \right).
\end{equation}
The commutative diagram \eqref{eq:diagram-flatten-sigma2} is similar. We get by unwrapping the definitions that the map $\tilde{U}_{\sigma_1, \pi } \colon \sigma_1 \parabolicInduction \pi \parabolicInduction \sigma_2 \rightarrow \pi \parabolicInduction \sigma_1 \parabolicInduction \sigma_2$ is given by
\begin{equation}\label{eq:sigma1-intertwining-operator}
	\tilde{U}_{\sigma_1,\pi}\left(f\right)\left(g\right) = \sum_{u_{n,m_1} \in \unipotentRadical_{n,m_1}} \mathrm{sw}_{\sigma_1, \pi}f\left(\begin{pmatrix}
		& I_{m_1}\\
		I_n &  \\
		& & I_{m_2}
	\end{pmatrix} \begin{pmatrix}
		u_{n,m_1} &\\
		& I_{m_2}
	\end{pmatrix} g \right).
\end{equation}
Finally, in the diagram \eqref{eq:diagram-flatten-pi}, we have that $\tilde{U}_{\sigma_1 \parabolicInduction \sigma_2, \pi} \colon \sigma_1 \parabolicInduction \sigma_2 \parabolicInduction \pi \rightarrow \pi \parabolicInduction \sigma_1 \parabolicInduction \sigma_2$ is given by \begin{equation}\label{eq:sigma1-circ-sigma2-intertwining-operator}
	\tilde{U}_{\sigma_1 \parabolicInduction \sigma_2, \pi}\left(f\right)\left(g\right) = \sum_{u_{n,m} \in \unipotentRadical_{n,m}} \tilde{f}\left( \begin{pmatrix}
		& I_{m_1}\\
		& & I_{m_2}\\
		I_n
	\end{pmatrix} u_{n,m} g \right),
\end{equation}
where for $g \in \GL_{n + m}\left(\finiteField\right)$, we mean $\tilde{f}\left(g\right) = \mathrm{sw}_{\sigma_1,\pi} \mathrm{sw}_{\sigma_2, \pi} f\left(g\right)$.

\begin{proposition}
	We have $$ \tilde{U}_{\sigma_1 \parabolicInduction \sigma_2, \pi} = \tilde{U}_{\sigma_1, \pi } \circ \tilde{U}_{\sigma_2,\pi}.$$
\end{proposition}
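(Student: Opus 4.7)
The identity is purely formal: both sides are given by explicit formulas (the expressions \eqref{eq:sigma2-intertwining-operator}, \eqref{eq:sigma1-intertwining-operator}, and \eqref{eq:sigma1-circ-sigma2-intertwining-operator}), so the plan is to unwind both sides and match them up via a block matrix computation. There is no conceptual obstacle; the challenge is only in keeping the block decompositions straight.

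First I would compose the two formulas to get
\[
(\tilde{U}_{\sigma_1,\pi} \circ \tilde{U}_{\sigma_2,\pi})(f)(g) = \sum_{u_{n,m_1}} \sum_{u_{n,m_2}} \mathrm{sw}_{\sigma_1,\pi}\,\mathrm{sw}_{\sigma_2,\pi}\,f\bigl(M(u_{n,m_1},u_{n,m_2})\,g\bigr),
\]
where $M(u_{n,m_1},u_{n,m_2})$ is the product of the four matrices appearing in \eqref{eq:sigma1-intertwining-operator} and \eqref{eq:sigma2-intertwining-operator} (here I am using that applying $\tilde{U}_{\sigma_1,\pi}$ to a function on the left of a fixed group element commutes with the application of $\tilde{U}_{\sigma_2,\pi}$ inside).

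Next I would parametrize $u_{n,m_i}$ by writing it as $\left(\begin{smallmatrix} I_n & X_i \\ & I_{m_i}\end{smallmatrix}\right)$ with $X_i \in \Mat{n}{m_i}(\finiteField)$, and carry out the block multiplication. The computation is straightforward once the block structures are aligned (the column-block structure $(m_1,n,m_2)$ of the outer Weyl element from \eqref{eq:sigma2-intertwining-operator} matches the row-block structure of the unipotent-times-Weyl factor from \eqref{eq:sigma1-intertwining-operator}), and yields
\[
M(u_{n,m_1},u_{n,m_2}) = \begin{pmatrix} & I_{m_1} & \\ & & I_{m_2} \\ I_n & X_1 & X_2 \end{pmatrix}.
\]
On the other side, for $u_{n,m} = \left(\begin{smallmatrix} I_n & X_1 & X_2 \\ & I_{m_1} & \\ & & I_{m_2} \end{smallmatrix}\right) \in \unipotentRadical_{n,m}$, the matrix appearing in \eqref{eq:sigma1-circ-sigma2-intertwining-operator} is exactly the matrix above. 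Moreover, the assignment $(X_1,X_2) \mapsto u_{n,m}$ is a bijection from $\Mat{n}{m_1}(\finiteField)\times \Mat{n}{m_2}(\finiteField)$ onto $\unipotentRadical_{n,m}$, so the double sum on the composition side reindexes to the single sum on the other side.

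Finally I would check that the coefficient-space maps agree: on the space $\sigma_1 \otimes \sigma_2 \otimes \pi$, the composition $\mathrm{sw}_{\sigma_1,\pi} \circ \mathrm{sw}_{\sigma_2,\pi}$ sends $v_1 \otimes v_2 \otimes v_\pi$ to $v_\pi \otimes v_1 \otimes v_2$, which is precisely the swap $f \mapsto \tilde{f}$ used in \eqref{eq:sigma1-circ-sigma2-intertwining-operator}. Combining these three observations — matching Weyl-and-unipotent products, bijective reindexing of the sums, and agreement of the coefficient-swap — the two sides are equal term-by-term, proving the claim. The only place one could slip is in the block multiplication, so I would be careful to track the column-block sizes of each factor explicitly before multiplying.
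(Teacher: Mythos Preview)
Your proposal is correct and follows essentially the same approach as the paper: compose the two explicit formulas, parametrize the unipotents by matrices $X_1 \in \Mat{n}{m_1}(\finiteField)$ and $X_2 \in \Mat{n}{m_2}(\finiteField)$, and verify by block multiplication that the product of the four matrices equals the Weyl element times the unipotent appearing in \eqref{eq:sigma1-circ-sigma2-intertwining-operator}. The only difference is cosmetic---the paper writes out the four-factor product and its simplification explicitly, whereas you summarize the outcome---but the argument is identical.
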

\begin{proof}
	Let $f \in \sigma_1 \parabolicInduction \sigma_2 \parabolicInduction \pi$ and $g \in \GL_{n + m}\left(\finiteField\right)$. Then by \eqref{eq:sigma2-intertwining-operator} and \eqref{eq:sigma1-intertwining-operator},
	\begin{align*}
	(\tilde{U}_{\sigma_1, \pi } \circ \tilde{U}_{\sigma_2,\pi}f)\left(g\right) =& \sum_{X \in \Mat{n}{m_1}\left(\finiteField\right)} \sum_{Y \in \Mat{n}{m_2}\left(\finiteField\right)}\tilde{f}\left(\begin{pmatrix}
	I_{m_1} &\\
	& & I_{m_2}\\
	& I_n
	\end{pmatrix} \begin{pmatrix}
	I_{m_1} &\\
	& I_n & Y\\
	& & I_{m_2} 
	\end{pmatrix} \right.\\
	& \times \left.\begin{pmatrix}
	& I_{m_1}\\
	I_n &  \\
	& & I_{m_2}
	\end{pmatrix} \begin{pmatrix}
	I_n & X &\\
	& I_{m_1}\\
	& & I_{m_2}
	\end{pmatrix} g \right).
	\end{align*}
	A simple computation shows that
	\begin{align*}
	& \begin{pmatrix}
	I_{m_1} &\\
	& & I_{m_2}\\
	& I_n
	\end{pmatrix} \begin{pmatrix}
	I_{m_1} &\\
	& I_n & Y\\
	& & I_{m_2} 
	\end{pmatrix} 
	\begin{pmatrix}
	& I_{m_1}\\
	I_n &  \\
	& & I_{m_2}
	\end{pmatrix} \begin{pmatrix}
	I_n & X &\\
	& I_{m_1}\\
	& & I_{m_2}
	\end{pmatrix}\\	
	=&  \begin{pmatrix}
	& I_{m_1} &\\
	& & I_{m_2}\\
	I_n & &
	\end{pmatrix} \begin{pmatrix}
	I_n & X & Y\\
	& I_{m_1}\\
	& & I_{m_2}
	\end{pmatrix}.
	\end{align*}
	Hence, we get $$(\tilde{U}_{\sigma_1, \pi } \circ \tilde{U}_{\sigma_2,\pi}f)\left(g\right) = \sum_{X \in \Mat{n}{m_1}\left(\finiteField\right)} \sum_{Y \in \Mat{n}{m_2}\left(\finiteField\right)}\tilde{f}\left(\begin{pmatrix}
	& I_{m_1} &\\
	& & I_{m_2}\\
	I_n & &
	\end{pmatrix} \begin{pmatrix}
	I_n & X & Y\\
	& I_{m_1}\\
	& & I_{m_2}
	\end{pmatrix} g\right),$$
	and the last sum is $\tilde{U}_{\sigma_1 \parabolicInduction \sigma_2, \pi}\left(f\right)\left(g\right)$ by \eqref{eq:sigma1-circ-sigma2-intertwining-operator}.
\end{proof}

\subsubsection{Proof of \Cref{thm:multiplicitavity-of-gamma-factors}}
Let $\whittakerVector{\pi}$, $\whittakerVector{\sigma_1}$ and $\whittakerVector{\sigma_2}$ be non-zero $\fieldCharacter$-Whittaker vectors of $\pi$, $\sigma_1$ and $\sigma_2$, respectively. We keep the notations from the previous section. We are now ready to prove \Cref{thm:multiplicitavity-of-gamma-factors}.

\begin{proof}
	By definition, we have $$ \left(\identityMap_{\sigma_1} \otimes U_{\sigma_2, \pi}\right)\left( \spairWhittakerVector{\sigma_1}{\sigma_2 \parabolicInduction \pi} \right) = \gammaFactor{\pi}{\sigma_2} \spairWhittakerVector{\sigma_1}{\pi \parabolicInduction \sigma_2}.$$
Since $L_{\sigma_1;\sigma_2,\pi} \spairWhittakerVector{\sigma_1}{\sigma_2 \parabolicInduction \pi} = \tripleWhittakerVector{\sigma_1}{\sigma_2}{\pi}$ and $L_{\sigma_1;\pi,\sigma_2} \spairWhittakerVector{\sigma_1}{\pi \parabolicInduction \sigma_2} = \tripleWhittakerVector{\sigma_1}{\pi}{\sigma_2}$, we get from the commutative diagram \eqref{eq:diagram-flatten-sigma1} that
$$ \tilde{U}_{\sigma_2, \pi}  \tripleWhittakerVector{\sigma_1}{\sigma_2}{\pi} = \gammaFactor{\pi}{\sigma_2} \tripleWhittakerVector{\sigma_1}{\pi}{\sigma_2}.$$
Similarly, we have that $$ \left(U_{\sigma_1, \pi} \otimes  \identityMap_{\sigma_2}\right)\left( \spairWhittakerVector{\sigma_1 \parabolicInduction \pi}{\sigma_2} \right) = \gammaFactor{\pi}{\sigma_1} \spairWhittakerVector{\pi \parabolicInduction \sigma_1}{\sigma_2},$$ and we get from the commutative diagram \eqref{eq:diagram-flatten-sigma2} that 
$$ \tilde{U}_{\sigma_1, \pi}  \tripleWhittakerVector{\sigma_1}{\pi}{\sigma_2} = \gammaFactor{\pi}{\sigma_1} \tripleWhittakerVector{\pi}{\sigma_1}{\sigma_2}.$$
Finally, we have $$U_{\sigma_1 \parabolicInduction \sigma_2, \pi} \spairWhittakerVector{\sigma_1 \parabolicInduction \sigma_2}{\pi} = \gammaFactor{\pi}{(\sigma_1 \parabolicInduction \sigma_2)} \spairWhittakerVector{\pi}{\sigma_1 \parabolicInduction \sigma_2}.$$
Since $L_{\sigma_1,\sigma_2;\pi} \spairWhittakerVector{\sigma_1 \parabolicInduction \sigma_2}{\pi} = \tripleWhittakerVector{\sigma_1}{\sigma_2}{\pi}$ and $L_{\pi; \sigma_1, \sigma_2} \spairWhittakerVector{\pi}{\sigma_1 \parabolicInduction \sigma_2} = \tripleWhittakerVector{\pi}{\sigma_1}{\sigma_2}$, we get from the commutative diagram \eqref{eq:diagram-flatten-pi}
$$\tilde{U}_{\sigma_1 \parabolicInduction \sigma_2, \pi} \tripleWhittakerVector{\sigma_1}{\sigma_2}{\pi} = \gammaFactor{\pi}{(\sigma_1 \parabolicInduction \sigma_2)} \tripleWhittakerVector{\pi}{\sigma_1}{\sigma_2}.$$
Since $\tilde{U}_{\sigma_1 \parabolicInduction \sigma_2, \pi} = \tilde{U}_{\sigma_1, \pi} \circ \tilde{U}_{\sigma_2, \pi}$, we get that $$\gammaFactor{\pi}{(\sigma_1 \parabolicInduction \sigma_2)} \tripleWhittakerVector{\pi}{\sigma_1}{\sigma_2} = \gammaFactor{\pi}{\sigma_1} \gammaFactor{\pi}{\sigma_2} \tripleWhittakerVector{\pi}{\sigma_1}{\sigma_2},$$
and the theorem follows.
\end{proof}

\subsection{Expression in terms of Bessel functions}

In this section, we express the Shahidi gamma factor of two irreducible generic representations in terms of their Bessel functions.

Let $\pi$ and $\sigma$ be irreducible generic representations of $\GL_n\left(\finiteField\right)$ and $\GL_m\left(\finiteField\right)$, respectively. We assume that $\pi$ and $\sigma$ are realized by their Whittaker models $\whittaker\left(\pi, \fieldCharacter\right)$ and $\whittaker\left(\sigma, \fieldCharacter\right)$, respectively. We choose the Whittaker vectors of $\pi$ and $\sigma$ to be their corresponding Bessel functions, i.e., we choose $\whittakerVector{\pi} = \repBesselFunction{\pi}$ and $\whittakerVector{\sigma} = \repBesselFunction{\sigma}$. We denote $\pairBesselFunction{\sigma}{\pi} = \spairWhittakerVector{\sigma}{\pi} = f_{\repBesselFunction{\sigma},\repBesselFunction{\pi}}$ and similarly $\pairBesselFunction{\pi}{\sigma} = \spairWhittakerVector{\pi}{\sigma} = f_{\repBesselFunction{\pi},\repBesselFunction{\sigma}}$.

Assume that $n \ge m$. By definition, we have that for any $g \in \GL_{n + m}\left(\finiteField\right)$, $$\left(U_{\sigma, \pi} \pairBesselFunction{\sigma}{\pi}\right)\left(g\right) = \gammaFactor{\pi}{\sigma} \pairBesselFunction{\pi}{\sigma}\left(g\right).$$ 
Substituting $g = \weylnm_{m,n}$, we get
$$\gammaFactor{\pi}{\sigma} \pairBesselFunction{\pi}{\sigma} \left(\weylnm_{m,n}\right) = \sum_{u \in \unipotentRadical_{n,m}} \swapHomomorphism_{\sigma,\pi}{\pairBesselFunction{\sigma}{\pi}}\left( \weylnm_{n,m} u \weylnm_{m,n} \right),$$ and therefore
\begin{equation}\label{eq:gamma-factor-as-sum-of-bessel-functions-first-formula}
	\gammaFactor{\pi}{\sigma} \repBesselFunction{\pi} \otimes \repBesselFunction{\sigma} = \sum_{A \in \Mat{n}{m}\left(\finiteField\right)} \swapHomomorphism_{\sigma,\pi}{\pairBesselFunction{\sigma}{\pi}} \begin{pmatrix}
		\identityMatrix{m} & \\
		A & \identityMatrix{n}
	\end{pmatrix}.
\end{equation}

In order for $\pairBesselFunction{\sigma}{\pi} \begin{pmatrix}
\identityMatrix{m} & \\
A & \identityMatrix{n}
\end{pmatrix}$ not to vanish, we must have $\begin{pmatrix}
\identityMatrix{m} & \\
A & \identityMatrix{n}
\end{pmatrix} \in \parabolicGroup_{m,n} \weylnm_{n,m} \unipotentSubgroup_{n + m}$, so there must exist $\begin{pmatrix}
p_1 & x\\
& p_2
\end{pmatrix} \in \parabolicGroup_{m,n}$ and $\begin{pmatrix}
u_1 & y\\
& u_2
\end{pmatrix} \in \unipotentSubgroup_{n + m}$, where $u_1 \in \unipotentSubgroup_n$ and $u_2 \in \unipotentSubgroup_m$, such that $$\begin{pmatrix}
p_1 & x\\
& p_2
\end{pmatrix} \begin{pmatrix}
\identityMatrix{m} & \\
A & \identityMatrix{n}
\end{pmatrix} = \weylnmmatrix{m}{n} \begin{pmatrix}
u_1 & y\\
& u_2
\end{pmatrix},$$
i.e.,
$$\begin{pmatrix}
p_1 + xA & x\\
p_2 A & p_2
\end{pmatrix} = \begin{pmatrix}
 & u_2 \\
u_1 & y
\end{pmatrix}.$$
Therefore, we have $p_1 + xA = 0$ and $x = \left(0_{m \times \left(n-m\right)}, u_2\right)$.

In order to proceed, we will separate two cases, the case where $n > m$ and the case where $n = m$.

\subsubsection{The case $n > m$}\label{subsection:bessel-expression-m-smaller-than-n}

In this case, we write $A = \begin{pmatrix}
A_1\\
A_2
\end{pmatrix}$, where $A_1 \in \Mat{\left(n-m\right)}{m}\left(\finiteField\right)$ and $A_2 \in \Mat{m}{m}\left(\finiteField\right)$ and $x = \left(0_{m \times \left(n-m\right)}, u_2 \right)$. Then $p_1 + xA = 0$ implies $p_1 + u_2 A_2 = 0$, and therefore $A_2$ is invertible.

Write \begin{align*}
	\begin{pmatrix}
	\identityMatrix{n} &\\
	A & \identityMatrix{m}
	\end{pmatrix} = \begin{pmatrix}
	I_m &\\
	A_1 & I_{n-m} &\\
	A_2 & & I_m
	\end{pmatrix} &= \begin{pmatrix}
	I_m & & -A_2^{-1}\\
	A_1 & I_{n-m} & \\
	A_2 & & 
	\end{pmatrix} \begin{pmatrix}
	I_m & & A_2^{-1} \\
	& I_{n-m} & -A_1 A_2^{-1}\\
	& & I_m
	\end{pmatrix}\\
	&= \begin{pmatrix}
	-A_2^{-1} & I_m & \\
	 & A_1 &  I_{n-m}\\
	 & A_2 & 
	\end{pmatrix} \weylnm_{n,m} \begin{pmatrix}
	I_m & & A_2^{-1} \\
	& I_{n-m} & -A_1 A_2^{-1}\\
	& & I_m
	\end{pmatrix}.
\end{align*}
Therefore, we have 
\begin{equation}\label{eq:gamma-factor-term-m-smaller-than-n}
	{\pairBesselFunction{\sigma}{\pi}} \begin{pmatrix}
		\identityMatrix{m} & \\
		A & \identityMatrix{n}
	\end{pmatrix} = \fieldCharacter\begin{pmatrix}
		I_{n-m} & -A_1 A_2^{-1}\\
		& I_m
	\end{pmatrix} \sigma\left(-A_2^{-1}\right) \otimes \pi \begin{pmatrix}
		A_1 &  I_{n-m}\\
		A_2
	\end{pmatrix}  \repBesselFunction{\sigma} \otimes \repBesselFunction{\pi}.
\end{equation}

Substituting \eqref{eq:gamma-factor-term-m-smaller-than-n} back in \eqref{eq:gamma-factor-as-sum-of-bessel-functions-first-formula}, we get

\begin{equation}\label{eq:gamma-intermediate-m-smaller-than-n}
	\begin{split}
			&\gammaFactor{\pi}{\sigma} \repBesselFunction{\pi} \otimes \repBesselFunction{\sigma} 
		\\ =& \sum_{\substack{A_1 \in \Mat{\left(n-m\right)}{m}\left(\finiteField\right)\\
				A_2 \in \GL_m\left(\finiteField\right)}} \fieldCharacter\begin{pmatrix}
			I_{n-m} & -A_1 A_2^{-1}\\
			& I_m
		\end{pmatrix} \pi \begin{pmatrix}
			A_1 &  I_{n-m}\\
			A_2
		\end{pmatrix} \otimes \sigma\left(-A_2^{-1}\right) \repBesselFunction{\pi} \otimes \repBesselFunction{\sigma}.
	\end{split}
\end{equation}

We evaluate both sides of \eqref{eq:gamma-intermediate-m-smaller-than-n} at $\left(\identityMatrix{n}, \identityMatrix{m}\right)$ to get

$$\gammaFactor{\pi}{\sigma} = \sum_{\substack{A_1 \in \Mat{\left(n-m\right)}{m}\left(\finiteField\right)\\
		A_2 \in \GL_m\left(\finiteField\right)}} \fieldCharacter\begin{pmatrix}
I_{n-m} & -A_1 A_2^{-1}\\
& I_m
\end{pmatrix} \repBesselFunction{\pi} \begin{pmatrix}
A_1 &  I_{n-m}\\
A_2
\end{pmatrix} \repBesselFunction{\sigma} \left(-A_2^{-1}\right).$$

Writing $$\begin{pmatrix}
A_1 &  I_{n-m}\\
A_2
\end{pmatrix} = \begin{pmatrix}
I_{n-m} & A_1 A_2^{-1}\\
& I_{m}
\end{pmatrix} \begin{pmatrix}
& I_{n-m}\\
A_2
\end{pmatrix},$$
we get 
$$\repBesselFunction{\pi} \begin{pmatrix}
A_1 &  I_{n-m}\\
A_2
\end{pmatrix} = \fieldCharacter \begin{pmatrix}
I_{n-m} & A_1 A_2^{-1}\\
& I_{m}
\end{pmatrix} \repBesselFunction{\pi} \begin{pmatrix}
& I_{n-m}\\
A_2
\end{pmatrix},$$
and therefore
\begin{align*}
	\gammaFactor{\pi}{\sigma} &= \sum_{\substack{A_1 \in \Mat{\left(n-m\right)}{m}\left(\finiteField\right)\\
			A_2 \in \GL_m\left(\finiteField\right)}}  \repBesselFunction{\pi} \begin{pmatrix}
	&  I_{n-m}\\
	A_2
	\end{pmatrix} \repBesselFunction{\sigma} \left(-A_2^{-1}\right).
\end{align*}
The summand is independent of $A_1$. Using the equivariance properties of the Bessel function, we get that the summand is invariant under $\unipotentSubgroup_{m}$ left translations of $A_2$. Finally, using the properties of the Bessel function discussed in \Cref{section:whittaker-models-and-bessel-functions}, we get
$$\gammaFactor{\pi}{\sigma} = q^{\frac{m \left(2n-m-1\right)}{2}} \centralCharacter{\sigma}\left(-1\right) \sum_{x \in \unipotentSubgroup_{m} \backslash \GL_m\left(\finiteField\right)} \repBesselFunction{\pi} \begin{pmatrix}
	&  I_{n-m}\\
	x
\end{pmatrix} \grepBesselFunction{\contragredient{\sigma}}{\fieldCharacter^{-1}} \left(x\right),$$
where $q^{\frac{m \left(2n-m-1\right)}{2}} = \sizeof{\Mat{\left(n-m\right)}{m}\left(\finiteField\right)} \cdot \sizeof{\unipotentSubgroup_{m}}$.

\subsubsection{The case $n=m$}\label{subsection:bessel-expression-m-equals-n}
In this case, we have $-p_1 = xA$, and therefore $A$ is invertible. We write $$\begin{pmatrix}
I_n & \\
A & I_n
\end{pmatrix} = \begin{pmatrix}
I_n & -A^{-1}\\
A & 
\end{pmatrix} \begin{pmatrix}
I_n & A^{-1}\\
& I_n
\end{pmatrix} = \begin{pmatrix}
-A^{-1} & I_n\\
& A
\end{pmatrix} \weylnm_{n,n} \begin{pmatrix}
I_n & A^{-1}\\
& I_n
\end{pmatrix}.$$
Therefore, we have \begin{equation}\label{eq:gamma-factor-term-m-equals-n}
	{\pairBesselFunction{\sigma}{\pi}} \begin{pmatrix}
		\identityMatrix{m} & \\
		A & \identityMatrix{n}
	\end{pmatrix} = \fieldCharacter \begin{pmatrix}
		I_n & A^{-1}\\
		& I_n
	\end{pmatrix} \sigma\left(-A^{-1}\right)  \otimes \pi\left(A\right) \repBesselFunction{\sigma} \otimes \repBesselFunction{\pi}.
\end{equation}
Substituting \eqref{eq:gamma-factor-term-m-equals-n} in \eqref{eq:gamma-factor-as-sum-of-bessel-functions-first-formula}, we get
\begin{equation}\label{eq:gamma-intermediate-m-equals-n}
	\gammaFactor{\pi}{\sigma} \repBesselFunction{\pi} \otimes \repBesselFunction{\sigma} = \sum_{A \in \GL_n\left(\finiteField\right)} \fieldCharacter \begin{pmatrix}
		I_n & A^{-1}\\
		& I_n
	\end{pmatrix} \pi\left(A\right) \otimes \sigma\left(-A^{-1}\right) \repBesselFunction{\pi} \otimes \repBesselFunction{\sigma}.
\end{equation}
Evaluating both sides of \eqref{eq:gamma-intermediate-m-equals-n} at $\left(\identityMatrix{n}, \identityMatrix{n}\right)$, we get
$$\gammaFactor{\pi}{\sigma} = \sum_{A \in \GL_n\left(\finiteField\right)} \fieldCharacter \begin{pmatrix}
I_n & A^{-1}\\
& I_n
\end{pmatrix} \repBesselFunction{\pi} \left(A\right) \repBesselFunction{\sigma}\left(-A^{-1}\right).$$
The summand is invariant under $\unipotentSubgroup_{n}$ left translations. Using the properties of the Bessel function discussed in \Cref{section:whittaker-models-and-bessel-functions}, we get
$$\gammaFactor{\pi}{\sigma} = q^{\frac{n\left(n-1\right)}{2}} \centralCharacter{\sigma}\left(-1\right) \sum_{x \in \unipotentSubgroup_{n} \backslash \GL_n\left(\finiteField\right)} \fieldCharacter \begin{pmatrix}
I_n & x^{-1}\\
& I_n
\end{pmatrix} \repBesselFunction{\pi} \left(x\right) \grepBesselFunction{\contragredient{\sigma}}{\fieldCharacter^{-1}}\left(x\right).$$

\subsubsection{Summary of cases}

We conclude this section by writing down formulas for the Shahidi gamma factor for a pair of irreducible generic representations, in terms of their Bessel functions for all cases. In order to do that, we use \Cref{thm:gamma-factor-of-involution-of-representations} and the formulas from Sections \ref{subsection:bessel-expression-m-smaller-than-n} and \ref{subsection:bessel-expression-m-equals-n}.

\begin{theorem}\label{thm:explicit-formula-for-gamma-factors-in-terms-of-bessel-functions}
	Let $\pi$ and $\sigma$ be irreducible generic representations of $\GL_n\left(\finiteField\right)$ and $\GL_m\left(\finiteField\right)$, respectively.
	
	\begin{enumerate}
		\item If $n > m$, then $$\gammaFactor{\pi}{\sigma} = q^{\frac{m \left(2n-m-1\right)}{2}} \centralCharacter{\sigma}\left(-1\right) \sum_{x \in \unipotentSubgroup_{m} \backslash \GL_m\left(\finiteField\right)} \repBesselFunction{\pi} \begin{pmatrix}
		&  I_{n-m}\\
		x
		\end{pmatrix} \grepBesselFunction{\contragredient{\sigma}}{\fieldCharacter^{-1}} \left(x\right).$$
		\item If $n = m$, then $$\gammaFactor{\pi}{\sigma} = q^{\frac{n\left(n-1\right)}{2}} \centralCharacter{\sigma}\left(-1\right) \sum_{x \in \unipotentSubgroup_{n} \backslash \GL_n\left(\finiteField\right)} \fieldCharacter \begin{pmatrix}
		I_n & x^{-1}\\
		& I_n
		\end{pmatrix} \repBesselFunction{\pi} \left(x\right) \grepBesselFunction{\contragredient{\sigma}}{\fieldCharacter^{-1}}\left(x\right).$$
		\item If $n < m$, then $$\gammaFactor{\pi}{\sigma} = q^{\frac{n \left(2m-n-1\right)}{2}} \centralCharacter{\pi}\left(-1\right) \sum_{x \in \unipotentSubgroup_{n} \backslash \GL_n\left(\finiteField\right)} \repBesselFunction{\pi} \left(x\right) \grepBesselFunction{\contragredient{\sigma}}{\fieldCharacter^{-1}} \begin{pmatrix}
		&  I_{m-n}\\
		x
		\end{pmatrix} .$$
	\end{enumerate}	
\end{theorem}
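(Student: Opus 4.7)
The plan is to compute $\gammaFactor{\pi}{\sigma}$ directly from the defining identity $U_{\sigma,\pi}\pairBesselFunction{\sigma}{\pi} = \gammaFactor{\pi}{\sigma}\pairBesselFunction{\pi}{\sigma}$, after realizing $\pi$ and $\sigma$ through their Whittaker models and choosing their Whittaker vectors to be the Bessel functions $\repBesselFunction{\pi}$ and $\repBesselFunction{\sigma}$. Evaluating this identity at $\weylnm_{m,n}$ and parametrizing $\unipotentRadical_{n,m}\weylnm_{m,n}$ by $\smash{\left\{\begin{pmatrix}\identityMatrix{m} & \\ A & \identityMatrix{n}\end{pmatrix} : A \in \Mat{n}{m}(\finiteField)\right\}}$ gives
\begin{equation*}
\gammaFactor{\pi}{\sigma}\cdot\repBesselFunction{\pi}\otimes\repBesselFunction{\sigma} = \sum_{A \in \Mat{n}{m}(\finiteField)} \swapHomomorphism_{\sigma,\pi}\pairBesselFunction{\sigma}{\pi}\begin{pmatrix}\identityMatrix{m} & \\ A & \identityMatrix{n}\end{pmatrix}.
\end{equation*}
Because $\pairBesselFunction{\sigma}{\pi}$ is supported on the open cell $\parabolicGroup_{m,n}\weylnm_{n,m}\unipotentSubgroup_{n+m}$, only those $A$ yielding a matrix in this cell contribute.

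For cases (1) and (2), where $n\ge m$, the support condition forces the bottom $m\times m$ block $A_2$ of $A$ (or $A$ itself when $n=m$) to be invertible. In each such instance I would produce an explicit factorization of $\smash{\left(\begin{smallmatrix}\identityMatrix{m} & \\ A & \identityMatrix{n}\end{smallmatrix}\right)}$ as $d\cdot\weylnm_{n,m}\cdot u$ with $d\in\leviGroup_{m,n}$ and $u\in\unipotentSubgroup_{n+m}$, apply the Whittaker equivariance of $\pairBesselFunction{\sigma}{\pi}$ to express the summand as a product of $\fieldCharacter(u)$ with $\sigma(\cdot)\repBesselFunction{\sigma}\otimes\pi(\cdot)\repBesselFunction{\pi}$, then evaluate at $(\identityMatrix{n},\identityMatrix{m})$. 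Converting $\repBesselFunction{\sigma}$ of an inverse matrix into $\grepBesselFunction{\contragredient{\sigma}}{\fieldCharacter^{-1}}$ via \Cref{prop:contragredient-complex-conjugate}, pulling out $\centralCharacter{\sigma}(-1)$, summing over the free parameter $A_1 \in \Mat{(n-m)}{m}(\finiteField)$, and passing to $\unipotentSubgroup_m\backslash\GL_m(\finiteField)$ cosets using Bessel-function invariance, produces the prefactor $q^{m(2n-m-1)/2}$ and the claimed formulas. These are precisely the calculations carried out in Sections \ref{subsection:bessel-expression-m-smaller-than-n} and \ref{subsection:bessel-expression-m-equals-n}.

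For part (3), where $n<m$, rather than redoing the argument from scratch (which is awkward since the roles of $\pi$ and $\sigma$ in $U_{\sigma,\pi}$ are not symmetric), I would invoke \Cref{thm:gamma-factor-of-involution-of-representations} to rewrite $\gammaFactor{\pi}{\sigma} = \gammaFactor[\fieldCharacter^{-1}]{\contragredient{\sigma}}{\contragredient{\pi}}$. Now $\contragredient{\sigma}$ lives on $\GL_m(\finiteField)$ and $\contragredient{\pi}$ on $\GL_n(\finiteField)$ with $m>n$, so part (1), already proved, applies. Substituting, using $\contragredient{\contragredient{\pi}}\cong\pi$ so that the inner Bessel function becomes $\repBesselFunction{\pi}$, and noting that $\centralCharacter{\contragredient{\pi}}(-1) = \centralCharacter{\pi}(-1)^{-1} = \centralCharacter{\pi}(-1)$ because $\centralCharacter{\pi}(-1) = \pm 1$, delivers part (3) with the correct $q$-power.

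The main obstacle is the bookkeeping in the cases $n\ge m$: pinning down exactly which matrices $A$ contribute, producing a clean Bruhat-style decomposition of each contributing element, and correctly accounting for the additive character, the central character of $\sigma$, and the Levi action so that the sum eventually collapses to one over $\unipotentSubgroup_m\backslash\GL_m(\finiteField)$. Once this is handled, part (3) is a formal consequence.
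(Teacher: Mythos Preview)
Your proposal is correct and follows essentially the same approach as the paper: evaluate the defining relation at $\weylnm_{m,n}$ to obtain \eqref{eq:gamma-factor-as-sum-of-bessel-functions-first-formula}, analyze the support condition to force invertibility of the relevant block, carry out the explicit Bruhat-type factorization and simplification for $n\ge m$ (exactly as in Sections \ref{subsection:bessel-expression-m-smaller-than-n} and \ref{subsection:bessel-expression-m-equals-n}), and then deduce the case $n<m$ from \Cref{thm:gamma-factor-of-involution-of-representations} applied to $(\contragredient{\sigma},\contragredient{\pi},\fieldCharacter^{-1})$.
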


\Cref{thm:explicit-formula-for-gamma-factors-in-terms-of-bessel-functions} allows us to give a relation between the Jacquet--Piatetski-Shapiro--Shalika gamma factors defined in \Cref{section:rankin-selberg-gamma-factors} and the Shahidi gamma factor. By \Cref{prop:rankin-selberg-gamma-factor-in-terms-of-bessel-functions} and \Cref{prop:rankin-selberg-gamma-factor-m-equals-n-in-terms-of-bessel-functions}, we get the following corollary.

\begin{corollary}\label{cor:relation-between-intertwining-and-rankin-selberg}
	Let $\pi$ be an irreducible cuspidal representation of $\GL_n\left(\finiteField\right)$ and let $\sigma$ be an irreducible generic representation of $\GL_m\left(\finiteField\right)$. Then we have the equality
	$$\gammaFactor{\pi}{\sigma} = q^{\frac{m \left(2n - m - 1\right)}{2}} \centralCharacter{\sigma}\left(-1\right) \rsGammaFactor{\pi}{\contragredient{\sigma}}$$
	in either of the following cases:
	\begin{enumerate}
		\item $n > m$.
		\item $n = m$ and $\sigma$ is cuspidal. 
	\end{enumerate}
\end{corollary}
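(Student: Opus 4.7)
The plan is to prove the corollary by direct comparison of Bessel-function expressions, applying \Cref{thm:explicit-formula-for-gamma-factors-in-terms-of-bessel-functions} to rewrite $\gammaFactor{\pi}{\sigma}$ and then recognizing the resulting sum over $\unipotentSubgroup \backslash \GL$ as the Bessel-function formula for $\rsGammaFactor{\pi}{\contragredient{\sigma}}$ supplied by \Cref{prop:rankin-selberg-gamma-factor-in-terms-of-bessel-functions} or \Cref{prop:rankin-selberg-gamma-factor-m-equals-n-in-terms-of-bessel-functions}, depending on whether $n > m$ or $n = m$. The hypotheses of the corollary have been arranged precisely so that the relevant Jacquet--Piatetski-Shapiro--Shalika gamma factor is defined: in case (1), \Cref{prop:rankin-selberg-gamma-factor-in-terms-of-bessel-functions} requires $\pi$ cuspidal and $\contragredient{\sigma}$ irreducible generic with $n > m$; in case (2), \Cref{prop:rankin-selberg-gamma-factor-m-equals-n-in-terms-of-bessel-functions} requires $\pi$ and $\contragredient{\sigma}$ both irreducible cuspidal, which follows from $\pi$ cuspidal and $\sigma$ cuspidal (since the contragredient of an irreducible cuspidal is irreducible cuspidal).

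For case (1), \Cref{thm:explicit-formula-for-gamma-factors-in-terms-of-bessel-functions}(1) expresses $\gammaFactor{\pi}{\sigma}$ as $q^{\frac{m(2n-m-1)}{2}} \centralCharacter{\sigma}(-1)$ times the sum
$$\sum_{x \in \unipotentSubgroup_{m} \backslash \GL_m\left(\finiteField\right)} \repBesselFunction{\pi} \begin{pmatrix} & I_{n-m}\\ x \end{pmatrix} \grepBesselFunction{\contragredient{\sigma}}{\fieldCharacter^{-1}}(x),$$
which is exactly the Bessel-function expression for $\rsGammaFactor{\pi}{\contragredient{\sigma}}$ given by \Cref{prop:rankin-selberg-gamma-factor-in-terms-of-bessel-functions}. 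This immediately yields the formula in the corollary.

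For case (2), \Cref{thm:explicit-formula-for-gamma-factors-in-terms-of-bessel-functions}(2) expresses $\gammaFactor{\pi}{\sigma}$ as $q^{\frac{n(n-1)}{2}} \centralCharacter{\sigma}(-1)$ times
$$\sum_{x \in \unipotentSubgroup_{n} \backslash \GL_n\left(\finiteField\right)} \fieldCharacter \begin{pmatrix} I_n & x^{-1}\\ & I_n \end{pmatrix} \repBesselFunction{\pi}(x) \grepBesselFunction{\contragredient{\sigma}}{\fieldCharacter^{-1}}(x).$$
Two minor bookkeeping steps are then needed: first, observe that the exponent $\frac{n(n-1)}{2}$ equals $\frac{m(2n-m-1)}{2}$ when $m=n$, so the prefactor has the correct form. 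Second, unwind the character value
$$\fieldCharacter \begin{pmatrix} I_n & x^{-1}\\ & I_n \end{pmatrix} = \fieldCharacter\bigl((x^{-1})_{n,1}\bigr) = \fieldCharacter(\bilinearPairing{e_n x^{-1}}{e_1}),$$
using that the only superdiagonal entry of this block matrix that can be nonzero is the $(n,n{+}1)$-entry, which is $(x^{-1})_{n,1}$. With this identification, the sum matches exactly the Bessel-function expression for $\rsGammaFactor{\pi}{\contragredient{\sigma}}$ from \Cref{prop:rankin-selberg-gamma-factor-m-equals-n-in-terms-of-bessel-functions}, and the identity follows.

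There is no genuine obstacle here: the corollary is essentially a pattern match between two previously derived explicit formulas. The only subtlety is the unpacking of the character value on the block-upper-triangular matrix in case (2), which must be interpreted via the same diagonal-sum formula that defines $\fieldCharacter$ on $\unipotentSubgroup_{2n}$.
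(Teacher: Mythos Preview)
Your proof is correct and follows exactly the approach indicated in the paper, which simply states that the corollary follows from \Cref{thm:explicit-formula-for-gamma-factors-in-terms-of-bessel-functions} together with \Cref{prop:rankin-selberg-gamma-factor-in-terms-of-bessel-functions} and \Cref{prop:rankin-selberg-gamma-factor-m-equals-n-in-terms-of-bessel-functions}. Your write-up spells out the matching of the two Bessel-function formulas (including the identification $\fieldCharacter\bigl(\begin{smallmatrix} I_n & x^{-1}\\ & I_n \end{smallmatrix}\bigr) = \fieldCharacter(\langle e_n x^{-1}, e_1\rangle)$ and the exponent check $\tfrac{n(n-1)}{2} = \tfrac{m(2n-m-1)}{2}$ at $m=n$) that the paper leaves implicit.
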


\section{Applications}

\subsection{Quantitative interpretation of gamma factors}\label{sec:quantitative interpretation-of-gamma-factors}

In this section, we give a representation theoretic interpretation of the absolute value of the Shahidi gamma factor. Our results relate the absolute value of a normalized version of the Shahidi gamma factor with the cuspidal support of the representations.

For irreducible generic representations $\pi$ and $\sigma$ of $\GL_n\left(\finiteField\right)$ and $\GL_m\left(\finiteField\right)$, respectively, we define the normalized Shahidi gamma factor by
$$\normalizedGammaFactor{\pi}{\sigma} = q^{-\frac{nm}{2}} \gammaFactor{\pi}{\sigma}.$$

It follows from \Cref{cor:full-parabolic-induction-multiplicativity-property} that under this normalization, the gamma factor is still multiplicative, i.e., the following proposition holds.

\begin{proposition}\label{prop:normalized-gamma-factor-full-multiplicativity}
	Let $\pi_1, \dots, \pi_r$ and $\sigma_1, \dots, \sigma_t$ be irreducible generic representations of $\GL_{n_1}\left(\finiteField\right), \dots, \GL_{n_r}\left(\finiteField\right)$ and $\GL_{m_1}\left(\finiteField\right), \dots, \GL_{m_t}\left(\finiteField\right)$. Suppose that $\pi$ is the unique irreducible generic subrepresentation of $\pi_1 \parabolicInduction \dots \parabolicInduction \pi_r$ and that $\sigma$ is the unique irreducible generic subrepresentation of $\sigma_1 \parabolicInduction \dots \parabolicInduction \sigma_t$. Then
	$$\normalizedGammaFactor{\pi}{\sigma} = \prod_{i = 1}^r \prod_{j = 1}^t \normalizedGammaFactor{\pi_i}{\sigma_j}.$$
\end{proposition}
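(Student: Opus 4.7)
The plan is to deduce this directly from \Cref{cor:full-parabolic-induction-multiplicativity-property}(2), which gives the analogous identity for the unnormalized Shahidi gamma factor. The only thing that really has to be checked is bookkeeping of the normalization powers of $q$, so the argument reduces to a one-line exponent computation.

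First I would set $n = n_1 + \dots + n_r$ and $m = m_1 + \dots + m_t$, so that $\pi$ is a representation of $\GL_n(\finiteField)$ and $\sigma$ is a representation of $\GL_m(\finiteField)$. Unwinding the definition of the normalized gamma factor gives $\normalizedGammaFactor{\pi}{\sigma} = q^{-nm/2}\,\gammaFactor{\pi}{\sigma}$ and $\normalizedGammaFactor{\pi_i}{\sigma_j} = q^{-n_i m_j/2}\,\gammaFactor{\pi_i}{\sigma_j}$ for every pair $(i,j)$.

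Next I would apply \Cref{cor:full-parabolic-induction-multiplicativity-property}(2), which asserts that under the hypotheses on $\pi$ and $\sigma$,
\[
\gammaFactor{\pi}{\sigma} = \prod_{i=1}^r \prod_{j=1}^t \gammaFactor{\pi_i}{\sigma_j}.
\]
Substituting $\gammaFactor{\pi_i}{\sigma_j} = q^{n_i m_j/2}\,\normalizedGammaFactor{\pi_i}{\sigma_j}$ and using the distributive identity
\[
\sum_{i=1}^r \sum_{j=1}^t \frac{n_i m_j}{2} = \frac{1}{2}\Bigl(\sum_{i=1}^r n_i\Bigr)\Bigl(\sum_{j=1}^t m_j\Bigr) = \frac{nm}{2},
\]
the powers of $q$ collect into $q^{nm/2}$, which is precisely cancelled by the $q^{-nm/2}$ in front of $\gammaFactor{\pi}{\sigma}$ on the normalized side. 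This yields $\normalizedGammaFactor{\pi}{\sigma} = \prod_{i,j} \normalizedGammaFactor{\pi_i}{\sigma_j}$, as required.

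There is no genuine obstacle here; the substantive input (iterated multiplicativity of $\gammaFactor{\cdot}{\cdot}$ and the fact that an irreducible generic subrepresentation of a parabolically induced representation shares the same Whittaker vector, so its gamma factor agrees with the one of the induced representation) is already embedded in \Cref{cor:full-parabolic-induction-multiplicativity-property}. The only point worth emphasizing in the write-up is the elementary but essential identity $\sum_{i,j} n_i m_j = nm$, which is what makes the $q^{-nm/2}$ normalization the unique one that turns the multiplicativity into a clean product formula without residual $q$-powers.
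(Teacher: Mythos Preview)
Your proposal is correct and follows exactly the approach the paper takes: the paper simply remarks that the proposition follows from \Cref{cor:full-parabolic-induction-multiplicativity-property}, and the only content is the exponent identity $\sum_{i,j} n_i m_j = nm$ that you spell out.
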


By Corollaries \ref{cor:relation-between-intertwining-and-rankin-selberg} and \ref{prop:size-of-rs-gamma-factor-m-smaller-than-n} and \Cref{prop:size-of-rs-gamma-factor-m-equals-n}, we have the following proposition, which allows us to express the size of the absolute value of $\gammaFactor{\pi}{\sigma}$ where $\pi$ and $\sigma$ are cuspidal.

\begin{proposition}\label{prop:normalized-gamma-factor-size-for-cuspidal-representations}
	Let $\pi$ and $\sigma$ be irreducible cuspidal representations of $\GL_n\left(\finiteField\right)$ and $\GL_m\left(\finiteField\right)$, respectively. Then
	\begin{equation*}
		\abs{\normalizedGammaFactor{\pi}{\sigma}} = \begin{dcases}
			q^{-\frac{n}{2}} & n = m \text{ and } \pi \cong \sigma, \\
			1 & \text{ otherwise}.			
		\end{dcases}
	\end{equation*}
\end{proposition}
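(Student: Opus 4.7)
The plan is to reduce directly to the absolute-value computations for the Jacquet--Piatetski-Shapiro--Shalika gamma factor (Corollary \ref{prop:size-of-rs-gamma-factor-m-smaller-than-n} and Proposition \ref{prop:size-of-rs-gamma-factor-m-equals-n}) via the comparison provided by Corollary \ref{cor:relation-between-intertwining-and-rankin-selberg}. After these substitutions, all that remains is exponent arithmetic involving the normalizing factor $q^{-nm/2}$.

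First I would reduce to the case $n \ge m$. By Theorem \ref{thm:gamma-factor-of-involution-of-representations}, $\gammaFactor{\pi}{\sigma} = \gammaFactor[\fieldCharacter^{-1}]{\contragredient{\sigma}}{\contragredient{\pi}}$, and the contragredient of an irreducible cuspidal representation is again irreducible cuspidal, while $\contragredient{\pi} \cong \contragredient{\sigma}$ iff $\pi \cong \sigma$. Moreover the absolute value of $\gammaFactor{\pi}{\sigma}$ does not depend on the choice of non-trivial additive character, since by Theorem \ref{thm:change-of-additive-character} any change of character introduces only factors of the form $\centralCharacter{\pi}(a)^m \centralCharacter{\sigma}(a)^{-n}$, which have absolute value $1$. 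Hence one may swap the roles of $\pi$ and $\sigma$ and assume $n \ge m$.

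Under this assumption Corollary \ref{cor:relation-between-intertwining-and-rankin-selberg} is applicable (note that when $n=m$, both $\pi$ and $\sigma$ are cuspidal by hypothesis) and gives
$$ |\gammaFactor{\pi}{\sigma}| = q^{m(2n-m-1)/2}\,|\rsGammaFactor{\pi}{\contragredient{\sigma}}|. $$
If $n > m$, then Corollary \ref{prop:size-of-rs-gamma-factor-m-smaller-than-n} yields $|\rsGammaFactor{\pi}{\contragredient{\sigma}}| = q^{-m(n-m-1)/2}$, and a one-line exponent computation produces $|\gammaFactor{\pi}{\sigma}| = q^{mn/2}$, hence $|\normalizedGammaFactor{\pi}{\sigma}| = 1$. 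If $n = m$, then Proposition \ref{prop:size-of-rs-gamma-factor-m-equals-n} distinguishes two subcases: $\pi \cong \sigma$ gives $|\rsGammaFactor{\pi}{\contragredient{\sigma}}| = 1$, while $\pi \not\cong \sigma$ gives $|\rsGammaFactor{\pi}{\contragredient{\sigma}}| = q^{n/2}$. Combined with the prefactor $q^{n(n-1)/2}$ and the normalization $q^{-n^{2}/2}$, these give $q^{-n/2}$ and $1$ respectively, matching the claim.

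There is no genuine obstacle beyond the bookkeeping of exponents. The only point deserving emphasis is that in the diagonal case $n=m$ with $\pi \cong \sigma$ one is truly relying on the exceptional value $\rsGammaFactor{\pi}{\contragredient{\pi}} = -1$ established in the appendix; without that ingredient the two subcases of Proposition \ref{prop:size-of-rs-gamma-factor-m-equals-n} could not be separated.
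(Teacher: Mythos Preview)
Your argument is correct and follows exactly the route the paper indicates: invoke Corollary~\ref{cor:relation-between-intertwining-and-rankin-selberg} to pass to the Jacquet--Piatetski-Shapiro--Shalika gamma factor, then apply Corollary~\ref{prop:size-of-rs-gamma-factor-m-smaller-than-n} and Proposition~\ref{prop:size-of-rs-gamma-factor-m-equals-n}. Your explicit reduction to $n\ge m$ via Theorems~\ref{thm:gamma-factor-of-involution-of-representations} and~\ref{thm:change-of-additive-character} makes precise a step the paper leaves implicit (since Corollary~\ref{cor:relation-between-intertwining-and-rankin-selberg} is only stated for $n\ge m$), but otherwise the proofs coincide.
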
 

\Cref{prop:normalized-gamma-factor-size-for-cuspidal-representations} tells us that the size of the normalized Shahidi gamma factor serves as a ``Kronecker delta function'' for cuspidal representations. It could be thought of an analog of \cite[Section 8.1]{Jacquet1983rankin}. Combining this with the multiplicativity property, we get the following theorem, that allows us to recover the cuspidal support of an irreducible generic representation $\pi$ by computing $\abs{\normalizedGammaFactor{\pi}{\sigma}}$ for any irreducible cuspidal $\sigma$.

\begin{theorem}\label{thm:cuspidal-support-is-determined-by-gamma-factor-absolute-value}
	Let $\pi$ be an irreducible generic representation of $\GL_n\left(\finiteField\right)$ and let $\sigma$ be an irreducible cuspidal representation of $\GL_m\left(\finiteField\right)$. Then 
	$$ \abs{\normalizedGammaFactor{\pi}{\sigma}} = q^{-\frac{{d_{\pi}\left(\sigma\right) m}}{2}},$$
	where $d_{\pi}\left(\sigma\right)$ is the number of times that $\sigma$ appears in the cuspidal support of $\pi$.
\end{theorem}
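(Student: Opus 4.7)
The plan is to reduce the theorem to the cuspidal case and then apply the multiplicativity of the normalized Shahidi gamma factor. By a result of Gelfand quoted earlier in the paper, the irreducible generic representation $\pi$ is the unique irreducible generic subrepresentation of a parabolic induction $\pi_1 \parabolicInduction \dots \parabolicInduction \pi_r$, where the $\pi_i$ are irreducible cuspidal representations of $\GL_{n_i}\left(\finiteField\right)$ with $n_1 + \dots + n_r = n$. These $\pi_i$ are precisely the constituents of the cuspidal support of $\pi$, and by definition $d_\pi\left(\sigma\right) = \#\left\{ i \mid \pi_i \cong \sigma \right\}$ (which forces $n_i = m$ for such indices).

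Next, I would invoke \Cref{prop:normalized-gamma-factor-full-multiplicativity}, applied with the trivial cuspidal decomposition on the $\sigma$-side (so $t = 1$ and $\sigma_1 = \sigma$), to conclude
$$\normalizedGammaFactor{\pi}{\sigma} = \prod_{i=1}^r \normalizedGammaFactor{\pi_i}{\sigma}.$$
Taking absolute values and substituting the cuspidal evaluation \Cref{prop:normalized-gamma-factor-size-for-cuspidal-representations} into each factor gives
$$\abs{\normalizedGammaFactor{\pi_i}{\sigma}} = \begin{dcases} q^{-\frac{m}{2}} & n_i = m \text{ and } \pi_i \cong \sigma, \\ 1 & \text{otherwise}. \end{dcases}$$
Hence the product collapses to $q^{-m/2}$ raised to the number of indices $i$ with $\pi_i \cong \sigma$, i.e., to $q^{-d_\pi\left(\sigma\right) m / 2}$, which is the desired formula.

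The only genuine content in this argument is the combination of multiplicativity (\Cref{thm:multiplicitavity-of-gamma-factors}, and its clean normalized form in \Cref{prop:normalized-gamma-factor-full-multiplicativity}) with the cuspidal size calculation (which in turn rests on \Cref{cor:relation-between-intertwining-and-rankin-selberg} and the Jacquet--Piatetski-Shapiro--Shalika absolute value estimates in \Cref{prop:size-of-rs-gamma-factor-m-smaller-than-n} and \Cref{prop:size-of-rs-gamma-factor-m-equals-n}). Since both ingredients are already established, there is no serious obstacle; the main thing to be careful about is the bookkeeping that the $n_i$ for cuspidal constituents isomorphic to $\sigma$ must equal $m$, so that the exponent $-n_i/2$ produced by \Cref{prop:normalized-gamma-factor-size-for-cuspidal-representations} matches the $-m/2$ appearing in the target formula.
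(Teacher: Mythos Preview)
Your proposal is correct and follows exactly the same approach as the paper's proof: decompose $\pi$ via its cuspidal support, apply the multiplicativity of the normalized gamma factor (\Cref{prop:normalized-gamma-factor-full-multiplicativity}), and then plug in the cuspidal absolute value computation (\Cref{prop:normalized-gamma-factor-size-for-cuspidal-representations}). The paper's proof is in fact terser than yours, simply citing these two propositions without spelling out the bookkeeping.
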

\begin{proof}
	Suppose that the cuspidal support of $\pi$ is $\left\{\pi_1, \dots, \pi_r \right\}.$ Then $\pi$ is the unique irreducible generic subrepresentation of $\pi_1 \parabolicInduction \dots \parabolicInduction \pi_r$. The result now follows immediately from \Cref{prop:normalized-gamma-factor-full-multiplicativity} and \Cref{prop:normalized-gamma-factor-size-for-cuspidal-representations}.
\end{proof}

As a corollary, we get the following converse theorem, which allows us to determine whether generic representations of $\GL_n\left(\finiteField\right)$ and $\GL_m\left(\finiteField\right)$ are isomorphic based on the absolute value of their normalized gamma factors. It is an analog of \cite[Lemma A.6]{atobe2017local}, but our proof is on the ``group side'' rather than on the ``Galois side''.

\begin{theorem}
	Let $\pi_1$ and $\pi_2$ be irreducible generic representations of $\GL_{n_1}\left(\finiteField\right)$ and $\GL_{n_2}\left(\finiteField\right)$, respectively. Suppose that for every $m > 0$ and every irreducible cuspidal representation $\sigma$ of $\GL_m\left(\finiteField\right)$ we have
	$$ \abs{\normalizedGammaFactor{\pi_1}{\sigma}} = \abs{\normalizedGammaFactor{\pi_2}{\sigma}}.$$ 
	Then $n_1 = n_2$ and $\pi_1 \cong \pi_2$.
\end{theorem}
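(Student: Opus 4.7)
The plan is to reduce the statement directly to \Cref{thm:cuspidal-support-is-determined-by-gamma-factor-absolute-value}, which asserts that $\abs{\normalizedGammaFactor{\pi}{\sigma}} = q^{-d_\pi(\sigma) m/2}$ for any irreducible cuspidal $\sigma$ of $\GL_m\left(\finiteField\right)$. The hypothesis of the theorem then becomes
$$q^{-d_{\pi_1}(\sigma) m / 2} = q^{-d_{\pi_2}(\sigma) m / 2}$$
for every $m$ and every irreducible cuspidal $\sigma$ of $\GL_m\left(\finiteField\right)$, from which I read off the equality of multiplicities $d_{\pi_1}(\sigma) = d_{\pi_2}(\sigma)$. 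In other words, $\pi_1$ and $\pi_2$ have the same cuspidal support as multisets.

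From this, I would first deduce $n_1 = n_2$. If the common cuspidal support is $\{\tau_1, \dots, \tau_r\}$ with $\tau_j$ a representation of $\GL_{k_j}\left(\finiteField\right)$, then by definition of cuspidal support both $\pi_1$ and $\pi_2$ embed as subrepresentations of $\tau_1 \parabolicInduction \dots \parabolicInduction \tau_r$, which acts on $\GL_{k_1 + \dots + k_r}\left(\finiteField\right)$; hence $n_1 = k_1 + \dots + k_r = n_2$.

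To conclude $\pi_1 \cong \pi_2$, I would use that $\tau_1 \parabolicInduction \dots \parabolicInduction \tau_r$ is of Whittaker type by \Cref{thm:unique-whittaker-vector-parabolic-induction}. Its space of $\fieldCharacter$-Whittaker vectors is therefore one-dimensional, and so it contains a unique irreducible generic subrepresentation (any two distinct irreducible generic subrepresentations would contribute linearly independent Whittaker vectors). Since $\pi_1$ and $\pi_2$ are both irreducible and generic, each is isomorphic to this unique subrepresentation, and therefore $\pi_1 \cong \pi_2$.

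The main conceptual point — and the only place one needs to be a bit careful — is the last step: one must invoke the fact that an irreducible generic representation is determined by its cuspidal support, and argue this via Whittaker uniqueness for parabolic induction of Whittaker-type representations. Everything else is a direct reading of the hypothesis through \Cref{thm:cuspidal-support-is-determined-by-gamma-factor-absolute-value}.
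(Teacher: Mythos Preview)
Your proof is correct and follows essentially the same approach as the paper: use \Cref{thm:cuspidal-support-is-determined-by-gamma-factor-absolute-value} to conclude that $\pi_1$ and $\pi_2$ have the same cuspidal support, then invoke \Cref{thm:unique-whittaker-vector-parabolic-induction} to deduce that an irreducible generic representation is uniquely determined by its cuspidal support. Your write-up is in fact more detailed than the paper's, which compresses the argument into two sentences.
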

\begin{proof}
	By \Cref{thm:cuspidal-support-is-determined-by-gamma-factor-absolute-value}, $\pi_1$ and $\pi_2$ have the same cuspidal support. By \Cref{thm:unique-whittaker-vector-parabolic-induction}, there exists a unique irreducible generic representation with a given cuspidal support.
\end{proof}

As another corollary, we explain that the functional equations in \Cref{thm:functional-equation-n-greater-than-m} and \Cref{prop:extended-functional-equation-n-equals-m} fail for $\pi$ and $\sigma$, whenever the cuspidal support of $\pi$ has a non-empty intersection with the cuspidal support of $\contragredient{\sigma}$.

\begin{corollary}
	Suppose that $\pi$ and $\sigma$ are irreducible generic representations of $\GL_n\left(\finiteField\right)$ and $\GL_m\left(\finiteField\right)$, respectively, and that $n > m$ (respectively, $n = m$). Suppose that the cuspidal support of $\pi$ has a non-empty intersection with the cuspidal support of $\contragredient{\sigma}$. Then the functional equation in \Cref{thm:functional-equation-n-greater-than-m} (respectively, \Cref{thm:rankin-selberg-m-equals-n-gamma-factor-definition}) does not hold for $\pi$ and $\sigma$.
\end{corollary}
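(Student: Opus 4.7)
The plan is a proof by contradiction. Suppose that the stated functional equation holds for $\pi$ and $\sigma$ with some non-zero constant $\gamma$. I will show that this forces $\abs{\normalizedGammaFactor{\pi}{\contragredient{\sigma}}} = 1$, which, combined with Theorem~\ref{thm:cuspidal-support-is-determined-by-gamma-factor-absolute-value} and Proposition~\ref{prop:normalized-gamma-factor-full-multiplicativity}, contradicts the hypothesis that the cuspidal supports of $\pi$ and $\contragredient{\sigma}$ intersect.

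First, I would apply the assumed functional equation twice to constrain $\abs{\gamma}$. For $n > m$, the derivation of Corollary~\ref{prop:size-of-rs-gamma-factor-m-smaller-than-n} goes through verbatim and yields $\abs{\gamma} = q^{-m(n - m - 1)/2}$; for $n = m$, the reasoning of Proposition~\ref{prop:extended-functional-equation-n-equals-m} first extends the equation to all of $\SchwartzSpacen$ (at least when $\pi \ncong \contragredient{\sigma}$), and then the argument of Corollary~\ref{cor:size-of-rankin-selberg-m-equals-n} gives $\abs{\gamma} = q^{n/2}$. Next, I would evaluate the functional equation on $W = \repBesselFunction{\pi}$ and $W' = \grepBesselFunction{\sigma}{\fieldCharacter^{-1}}$ (together with an appropriate Schwartz function when $n = m$). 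The derivations of Propositions~\ref{prop:rankin-selberg-gamma-factor-in-terms-of-bessel-functions} and~\ref{prop:rankin-selberg-gamma-factor-m-equals-n-in-terms-of-bessel-functions} are purely formal and do not invoke cuspidality, so they extract the same Bessel-function expression for $\gamma$ in the present setting. Comparing with Theorem~\ref{thm:explicit-formula-for-gamma-factors-in-terms-of-bessel-functions} applied to the pair $(\pi, \contragredient{\sigma})$, I obtain
$$\gammaFactor{\pi}{\contragredient{\sigma}} = q^{\frac{m(2n - m - 1)}{2}}\,\centralCharacter{\sigma}(-1)^{-1} \cdot \gamma$$
in the case $n > m$, and an analogous identity in the case $n = m$. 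Combined with the size estimate for $\gamma$ from the previous step, this produces $\abs{\gammaFactor{\pi}{\contragredient{\sigma}}} = q^{nm/2}$, i.e.\ $\abs{\normalizedGammaFactor{\pi}{\contragredient{\sigma}}} = 1$.

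Finally, writing the cuspidal support of $\contragredient{\sigma}$ as $\{\tau_1, \dots, \tau_t\}$ with $\tau_j$ a cuspidal representation of $\GL_{k_j}\left(\finiteField\right)$, Proposition~\ref{prop:normalized-gamma-factor-full-multiplicativity} together with Theorem~\ref{thm:cuspidal-support-is-determined-by-gamma-factor-absolute-value} yields
$$\abs{\normalizedGammaFactor{\pi}{\contragredient{\sigma}}} = \prod_{j=1}^{t} q^{-\frac{d_{\pi}(\tau_j)\, k_j}{2}},$$
and the right-hand side equals $1$ only if $d_{\pi}(\tau_j) = 0$ for every $j$, i.e.\ only if the cuspidal supports of $\pi$ and $\contragredient{\sigma}$ are disjoint---contrary to hypothesis. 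The main technical obstacle I anticipate is the subcase $n = m$ with $\pi \cong \contragredient{\sigma}$, where Proposition~\ref{prop:extended-functional-equation-n-equals-m} is not directly available; this case should be handled separately by exploiting the fact that the non-vanishing $\GL_n\left(\finiteField\right)$-invariant pairing $Z(W, W', 1; \fieldCharacter)$ arising from $\pi \cong \contragredient{\sigma}$ obstructs any uniform extension of the functional equation.
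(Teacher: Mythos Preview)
Your proposal is correct and follows essentially the same route as the paper: assume the functional equation holds, apply it twice to pin down $\abs{\gamma}$, use the Bessel-function expression to identify $\gamma$ with $\gammaFactor{\pi}{\contragredient{\sigma}}$ up to the explicit normalization, and conclude $\abs{\normalizedGammaFactor{\pi}{\contragredient{\sigma}}} = 1$, contradicting the cuspidal-support hypothesis via Theorem~\ref{thm:cuspidal-support-is-determined-by-gamma-factor-absolute-value}. You are in fact slightly more careful than the paper in flagging the subcase $n = m$, $\pi \cong \contragredient{\sigma}$ as requiring separate treatment.
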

\begin{proof}	
	If the functional equation holds for $\pi$ and $\sigma$, then it also holds for $\contragredient{\pi}$ and $\contragredient{\sigma}$. This can be seen by applying complex conjugation to the functional equation, which sends the $\fieldCharacter$-Whittaker functions to $\fieldCharacter^{-1}$-Whittaker functions of the contragredient. As in \Cref{prop:size-of-rs-gamma-factor-m-smaller-than-n} (respectively, \Cref{cor:size-of-rankin-selberg-m-equals-n}), we get that $\abs{\rsGammaFactor{\pi}{\sigma}} = q^{-\frac{m\left(n-m-1\right)}{2}}$. Whenever $\rsGammaFactor{\pi}{\sigma}$ is defined, it is given by the formula in \Cref{prop:rankin-selberg-gamma-factor-in-terms-of-bessel-functions} (respectively, \Cref{prop:rankin-selberg-gamma-factor-m-equals-n-in-terms-of-bessel-functions}), and therefore the formula in \Cref{cor:relation-between-intertwining-and-rankin-selberg} holds.
	This implies that $\abs{\normalizedGammaFactor{\pi}{\contragredient{\sigma}}} = 1$.
	
	On the other hand, because $\pi$ and $\contragredient{\sigma}$ have common elements in their cuspidal support, we have that $\abs{\normalizedGammaFactor{\pi}{\contragredient{\sigma}}} < 1$.
\end{proof}

\begin{remark}
	In his unpublished manuscript \cite{soudry1979}, the first author showed that whenever the cuspidal support of $\pi$ does not intersect the cuspidal support of $\contragredient{\sigma}$, the relevant functional equation holds. Due to length considerations, we do not include the proofs here.
\end{remark}

\subsection{Consequences for the converse theorem}

Our results from \Cref{sec:quantitative interpretation-of-gamma-factors} allow us to improve Nien's results regarding the converse theorem for irreducible generic representations of finite general linear groups.

Nien showed in \cite{Nien14} the following theorem.

\begin{theorem}\label{thm:nien-converse-thm}
	Let $\pi_1$ and $\pi_2$ be two irreducible cuspidal representations of $\GL_n\left(\finiteField\right)$ with the same central character. Suppose that for every $1 \le m \le \frac{n}{2}$, and every irreducible generic representation $\sigma$ of $\GL_m\left(\finiteField\right)$ we have \begin{equation}\label{eq:converse-theorem-equality}
		\rsGammaFactor{\pi_1}{\sigma} = \rsGammaFactor{\pi_2}{\sigma}.
	\end{equation}
	Then $\pi_1 \cong \pi_2$.
\end{theorem}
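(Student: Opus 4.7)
The plan is to show that the Bessel functions $\repBesselFunction{\pi_1}$ and $\repBesselFunction{\pi_2}$ coincide on all of $\GL_n(\finiteField)$; since an irreducible cuspidal representation of $\GL_n(\finiteField)$ is determined by its normalized Bessel function, this will force $\pi_1 \cong \pi_2$. The key input is \Cref{prop:rankin-selberg-gamma-factor-in-terms-of-bessel-functions}, which expresses each $\rsGammaFactor{\pi_i}{\sigma}$ as a sum against the Bessel function of $\sigma$ and so samples the values of $\repBesselFunction{\pi_i}$ on block antidiagonal matrices.

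First I would fix $1 \le m \le n/2$ and rewrite the hypothesis for an irreducible generic $\sigma$ of $\GL_m(\finiteField)$ as
$$\sum_{h \in \unipotentSubgroup_m \backslash \GL_m(\finiteField)} \bigl(\repBesselFunction{\pi_1} - \repBesselFunction{\pi_2}\bigr)\begin{pmatrix} & \identityMatrix{n-m}\\ h \end{pmatrix} \grepBesselFunction{\sigma}{\fieldCharacter^{-1}}(h) = 0.$$
The function $F_i(h) = \repBesselFunction{\pi_i}\!\begin{pmatrix} & \identityMatrix{n-m}\\ h \end{pmatrix}$ is bi-$(\fieldCharacter,\fieldCharacter)$-equivariant under $\unipotentSubgroup_m$, because the embeddings $u \mapsto \diag(\identityMatrix{n-m}, u)$ on the left and $u' \mapsto \diag(u', \identityMatrix{n-m})$ on the right land in $\unipotentSubgroup_n$ and preserve $\fieldCharacter$. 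The space of such bi-equivariant functions on $\GL_m(\finiteField)$ has dimension equal to the number of irreducible generic representations of $\GL_m(\finiteField)$, and the Bessel functions $\{\grepBesselFunction{\sigma}{\fieldCharacter^{-1}}\}_\sigma$ form a basis of it (linear independence follows from Schur orthogonality since Bessel functions are matrix coefficients, then the count matches). So the vanishing of the pairing for every such $\sigma$ forces
$$\repBesselFunction{\pi_1}\begin{pmatrix} & \identityMatrix{n-m}\\ h \end{pmatrix} = \repBesselFunction{\pi_2}\begin{pmatrix} & \identityMatrix{n-m}\\ h \end{pmatrix} \qquad \forall\, h \in \GL_m(\finiteField),\ 1 \le m \le n/2.$$
Applying $\repBesselFunction{\pi_i}(g^{-1}) = \conjugate{\repBesselFunction{\pi_i}(g)}$ from \Cref{prop:contragredient-complex-conjugate} and inverting these matrices extends the equality to block antidiagonal matrices whose bottom-left block has dimension $\ge n/2$, thereby covering every block size in $\{1,\ldots,n-1\}$.

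The main obstacle is to upgrade this partial agreement to $\repBesselFunction{\pi_1} = \repBesselFunction{\pi_2}$ globally. For a cuspidal representation, the Bessel function vanishes off a union of specific double cosets $\unipotentSubgroup_n \cdot tw \cdot \unipotentSubgroup_n$, where $w$ ranges over the \emph{Bessel-relevant} Weyl elements (those whose conjugation action on root subgroups is compatible with $\fieldCharacter$) and $t$ lies in the corresponding centralizer-like torus. For $\GL_n$ over $\finiteField$ these relevant Weyl elements are parametrized by compositions of $n$ and correspond precisely to the block antidiagonal permutations appearing above; moreover the value of $\repBesselFunction{\pi}$ on such a cell is determined by its value at a single representative through the bi-$\fieldCharacter$-equivariance, with the torus twist pinned down by the (shared) central character of $\pi$. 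A careful classification of the Bessel-relevant cells, as carried out in \cite{Nien14}, then shows that the equalities established above hit a representative of every such cell, forcing $\repBesselFunction{\pi_1} = \repBesselFunction{\pi_2}$ globally and completing the proof.
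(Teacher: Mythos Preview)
The paper does not give its own proof of this theorem: it is quoted from \cite{Nien14} and used as a black box to feed into the stronger converse theorem that follows. Your sketch reproduces the opening moves of Nien's argument correctly --- using \Cref{prop:rankin-selberg-gamma-factor-in-terms-of-bessel-functions} together with the fact that the Bessel functions $\{\grepBesselFunction{\sigma}{\fieldCharacter^{-1}}\}_\sigma$ span the space of bi-$(\fieldCharacter,\fieldCharacter)$-equivariant functions on $\GL_m(\finiteField)$ to force $\repBesselFunction{\pi_1}=\repBesselFunction{\pi_2}$ on the two-block elements $\left(\begin{smallmatrix} & \identityMatrix{n-m}\\ h & \end{smallmatrix}\right)$ for $m\le n/2$, and then extending to $m\ge n/2$ via \Cref{prop:contragredient-complex-conjugate}.

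The gap is in your final paragraph. The claim that these two-block elements together with the central character ``hit a representative of every such cell'' is false already for $n=4$. The Bessel-relevant Weyl elements are indexed by the eight compositions of $4$; the matrices $\left(\begin{smallmatrix} & \identityMatrix{4-m}\\ h & \end{smallmatrix}\right)$ with $m\le 2$ land only in the cells of $(3,1)$, $(2,2)$, $(2,1,1)$, and their inverses add $(1,3)$ and $(1,1,2)$. The cells of $(1,2,1)$ and of the long Weyl element $(1,1,1,1)$ are never reached, and since the torus on those cells has rank greater than one modulo the center, the shared central character cannot recover the missing values either. So equality of Bessel functions on the full support does not follow from what you have written; Nien's proof in \cite{Nien14} closes this gap by a genuinely additional argument --- an induction over Bruhat cells using partial Bessel functions and the fact that for a cuspidal representation the Kirillov model on the mirabolic is the full induced representation --- and it is precisely this mechanism that your sketch defers rather than supplies.
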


Using our results and \Cref{thm:nien-converse-thm}, we are able to deduce the following converse theorem, where $\pi_1$ and $\pi_2$ can be arbitrary generic representations (rather than just cuspidal representations), and \eqref{eq:converse-theorem-equality} needs to be verified only for cuspidal representations $\sigma$ (rather than for all generic representations). This is similar to \cite[Section 2.4]{jiang2015towards}.

\begin{theorem}
	Let $\pi_1$ and $\pi_2$ be two irreducible generic representations of $\GL_n\left(\finiteField\right)$ with the same central character. Suppose that for every $1 \le m \le \frac{n}{2}$, and every irreducible cuspidal representation $\sigma$ of $\GL_m\left(\finiteField\right)$ we have \begin{equation}\label{eq:normalized-converse-equality}
		\normalizedGammaFactor{\pi_1}{\sigma} = \normalizedGammaFactor{\pi_2}{\sigma}.
	\end{equation}
	Then $\pi_1 \cong \pi_2$.
\end{theorem}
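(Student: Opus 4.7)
The plan is to show that $\pi_1$ and $\pi_2$ have the same cuspidal support (as multisets); since an irreducible generic representation is uniquely recovered from its cuspidal support (as the unique irreducible generic subrepresentation of the corresponding parabolic induction, by \Cref{thm:unique-whittaker-vector-parabolic-induction}), this will finish the proof. First, I would use the hypothesis \eqref{eq:normalized-converse-equality} together with \Cref{thm:cuspidal-support-is-determined-by-gamma-factor-absolute-value} to conclude that $d_{\pi_1}(\sigma) = d_{\pi_2}(\sigma)$ for every irreducible cuspidal $\sigma$ of $\GL_m(\finiteField)$ with $1\le m\le n/2$. This already determines the \emph{small} part of the cuspidal support (the cuspidals $\tau$ whose size satisfies $\dim(\tau)\le n/2$) of $\pi_1$ and $\pi_2$, and shows these coincide as multisets.

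Since $\sum_i n_i = n$ and at most one summand can exceed $n/2$, there is at most one cuspidal in the cuspidal support of $\pi_i$ of size exceeding $n/2$. Writing $s$ for the total size contributed by the small part (equal for $\pi_1$ and $\pi_2$ by the previous paragraph), either $s=n$ and the cuspidal supports already agree, or there is a unique \emph{large} cuspidal $\tau$ in the support of $\pi_1$ and a unique large cuspidal $\tau'$ in the support of $\pi_2$, each of size $m_1:=n-s>n/2$. To finish I must show $\tau\cong\tau'$. The equality of central characters $\centralCharacter{\pi_1}=\centralCharacter{\pi_2}$ together with the matching of the small parts gives $\centralCharacter{\tau}=\centralCharacter{\tau'}$.

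Next, using the multiplicativity of the normalized Shahidi gamma factor (\Cref{prop:normalized-gamma-factor-full-multiplicativity}), the identity \eqref{eq:normalized-converse-equality} for cuspidal $\sigma$ of size $m\le n/2$ factors as
\[
\normalizedGammaFactor{\tau}{\sigma}\prod_{i\ge 2}\normalizedGammaFactor{\tau_i}{\sigma} \;=\; \normalizedGammaFactor{\tau'}{\sigma}\prod_{i\ge 2}\normalizedGammaFactor{\tau_i'}{\sigma},
\]
where the $\tau_i$ and $\tau_i'$ are the small cuspidals (and match as multisets). Since each $\normalizedGammaFactor{\tau_i}{\sigma}$ is a product of gamma factors between cuspidals and hence nonzero by \Cref{prop:normalized-gamma-factor-size-for-cuspidal-representations}, we can cancel to obtain $\normalizedGammaFactor{\tau}{\sigma}=\normalizedGammaFactor{\tau'}{\sigma}$ for every cuspidal $\sigma$ of size $m\le n/2$. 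Applying \Cref{prop:normalized-gamma-factor-full-multiplicativity} once more in the second variable, this extends to all irreducible generic $\sigma$ of $\GL_m(\finiteField)$ with $m\le m_1/2\le n/2$ (whose cuspidal support consists entirely of small cuspidals). Since $m_1>n/2\ge m$, \Cref{cor:relation-between-intertwining-and-rankin-selberg} applies and translates these equalities, after canceling the common normalizing factor (which depends only on $m,m_1$ and $\centralCharacter{\sigma}$, independent of $\tau$ vs $\tau'$), into $\rsGammaFactor{\tau}{\contragredient{\sigma}}=\rsGammaFactor{\tau'}{\contragredient{\sigma}}$. As $\sigma\mapsto \contragredient{\sigma}$ is a bijection on irreducible generic representations of $\GL_m(\finiteField)$, we obtain $\rsGammaFactor{\tau}{\sigma}=\rsGammaFactor{\tau'}{\sigma}$ for every irreducible generic $\sigma$ of $\GL_m(\finiteField)$ with $m\le m_1/2$.

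Finally, since $\tau,\tau'$ are irreducible cuspidal representations of $\GL_{m_1}(\finiteField)$ with $\centralCharacter{\tau}=\centralCharacter{\tau'}$, Nien's converse theorem (\Cref{thm:nien-converse-thm}) applied to $\tau$ and $\tau'$ yields $\tau\cong\tau'$. Hence the full cuspidal supports of $\pi_1$ and $\pi_2$ agree, and $\pi_1\cong\pi_2$. The main obstacle is the last bootstrapping step: one must verify carefully that the hypothesis on cuspidals of size $\le n/2$ indeed suffices to feed Nien's theorem for the large cuspidal of size $m_1\le n$, which works because $m_1/2\le n/2$ and because multiplicativity together with the relation to the Jacquet--Piatetski-Shapiro--Shalika gamma factor lets us convert cuspidal-$\sigma$ information into the generic-$\sigma$ information required by Nien.
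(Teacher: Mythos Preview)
Your argument is correct and uses the same ingredients as the paper (\Cref{thm:cuspidal-support-is-determined-by-gamma-factor-absolute-value}, \Cref{prop:normalized-gamma-factor-full-multiplicativity}, \Cref{cor:relation-between-intertwining-and-rankin-selberg}, and Nien's \Cref{thm:nien-converse-thm}), but the organization is genuinely different. The paper proceeds by induction on the cardinality of the cuspidal support of $\pi_1$: at each step it locates one cuspidal $\tau_1$ of size $\le n/2$ in the support of $\pi_1$, uses absolute values to show $\tau_1$ also lies in the support of $\pi_2$, cancels it via multiplicativity, and applies the induction hypothesis to the resulting generic representations $\pi_1',\pi_2'$ of $\GL_{n-n_1}(\finiteField)$; the base case is $\pi_1$ cuspidal, where Nien is invoked directly for $\pi_1,\pi_2$. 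You instead do everything in one pass: you match the entire ``small'' portion of the cuspidal supports at once via absolute values, observe that at most one ``large'' cuspidal can remain on each side (necessarily of the same size $m_1=n-s$ and with the same central character), cancel the small factors to isolate $\normalizedGammaFactor{\tau}{\sigma}=\normalizedGammaFactor{\tau'}{\sigma}$ for cuspidal $\sigma$ of size $\le n/2$, promote to generic $\sigma$ of size $\le m_1/2$ by multiplicativity in the second slot, translate to Jacquet--Piatetski-Shapiro--Shalika gamma factors, and apply Nien to $\tau,\tau'$ on $\GL_{m_1}(\finiteField)$.

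Your route is a little more structural and avoids the induction, making transparent exactly where Nien's theorem is actually needed (only for the unique large cuspidal, if any). The paper's inductive route is slightly slicker bookkeeping-wise, since it never explicitly splits into small/large cases. One minor notational slip: when you write ``$\dim(\tau)\le n/2$'' you mean the size $m$ with $\tau$ a representation of $\GL_m(\finiteField)$, not the vector-space dimension of $\tau$.
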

\begin{proof}
	Our proof is by induction on the cardinality of the cuspidal support of $\pi_1$.
	
	We first notice that by \Cref{prop:normalized-gamma-factor-full-multiplicativity}, we have that for any $1 \le m \le \frac{n}{2}$ and any irreducible generic representation $\sigma$ of $\GL_m\left(\finiteField\right)$,
	$$\normalizedGammaFactor{\pi_1}{\sigma} = \normalizedGammaFactor{\pi_2}{\sigma}.$$
	
	Suppose that $\pi_1$ is cuspidal, then its cuspidal support is of cardinality $1$. If $\pi_2$ is not cuspidal, then its cuspidal support contains an irreducible cuspidal representation $\tau$ of $\GL_k\left(\finiteField\right)$, where $k \le \frac{n}{2}$. Since $k < n$, we have by \Cref{thm:cuspidal-support-is-determined-by-gamma-factor-absolute-value} that $\abs{\normalizedGammaFactor{\pi_1}{\sigma}}= 1$. We also have by \Cref{thm:cuspidal-support-is-determined-by-gamma-factor-absolute-value} that $\abs{\normalizedGammaFactor{\pi_2}{\sigma}}< 1$, which is a contraction. Therefore, $\pi_2$ is also cuspidal, and by \Cref{cor:relation-between-intertwining-and-rankin-selberg} and \Cref{thm:nien-converse-thm}, we have that $\pi_1$ and $\pi_2$ are isomorphic.
	
	Suppose now that $\pi_1$ is not cuspidal. Let $\{ \tau_1, \dots, \tau_r \}$ the cuspidal support of $\pi_1$ and let $\{ \tau'_1, \dots, \tau'_{r'} \}$ be the cuspidal support of $\pi_2$. Without loss of generality, we have that $\tau_1$ is an irreducible cuspidal representation of $\GL_{n_1}\left(\finiteField\right)$, where $n_1 \le \frac{n}{2}$. Then by \Cref{thm:cuspidal-support-is-determined-by-gamma-factor-absolute-value} we have that 
	$\abs{\normalizedGammaFactor{\pi_1}{\tau_1}} < 1$. Since $n_1 \le \frac{n}{2}$, we have that $\normalizedGammaFactor{\pi_1}{\tau_1} = \normalizedGammaFactor{\pi_2}{\tau_1}$, and therefore $\abs{\normalizedGammaFactor{\pi_2}{\tau_1}} < 1$. By \Cref{thm:cuspidal-support-is-determined-by-gamma-factor-absolute-value}, this implies that $\tau_1$ is in the cuspidal support of $\pi_2$. Without loss of generality, we may assume that $\tau'_1 = \tau_1$. By \Cref{prop:normalized-gamma-factor-full-multiplicativity}, we deduce that for any irreducible generic representation $\sigma$ of $\GL_m\left(\finiteField\right)$ where $m \le \frac{n}{2}$, \begin{equation}\label{eq:converse-theorem-condition-for-smaller-cuspidal-support}
		 \prod_{j = 2}^r \normalizedGammaFactor{\tau_j}{\sigma} = \prod_{j = 2}^{r'} \normalizedGammaFactor{\tau'_j}{\sigma}.
	\end{equation}
	Let $\pi'_1$ be the unique irreducible generic representation of $\GL_{n - n_1}\left(\finiteField\right)$ with cuspidal support $\{\tau_2,\dots,\tau_r\}$, and let $\pi'_2$ be the unique irreducible generic representation of $\GL_{n - n_1}\left(\finiteField\right)$ with cuspidal support $\{ \tau'_2,\dots,\tau'_{r'} \}$. For $i=1,2$, the central characters of $\pi_i$ and $\pi'_i$ are related by $\centralCharacter{\pi_i} = \centralCharacter{\pi'_i} \cdot \centralCharacter{\tau_1}$. Therefore, we have that $\pi'_1$ and $\pi'_2$ also have the same central character. By \Cref{prop:normalized-gamma-factor-full-multiplicativity}, we have that \eqref{eq:converse-theorem-condition-for-smaller-cuspidal-support} implies that for every $m \le \frac{n}{2}$ and every irreducible generic representation $\sigma$ of $\GL_m\left(\finiteField\right)$,  $$\normalizedGammaFactor{\pi'_1}{\sigma} = \normalizedGammaFactor{\pi'_2}{\sigma}.$$
	By induction $\pi'_1 \cong \pi'_2$, and therefore $\{\tau_2, \dots, \tau_r \} = \{ \tau'_2, \dots, \tau'_{r'} \}$. Hence, $\pi_1 \cong \pi_2$, as required.
	\end{proof}

\subsection{Special values of the Bessel function}

In this section, we use our results regarding multiplicativity of the Shahidi gamma factor, and its relation to the Jacquet--Piatetski-Shapiro--Shalika gamma factor in order to find an explicit formula for special values of the Bessel function of irreducible generic representations of $\GL_n\left(\finiteField\right)$. For two blocks, such a formula was given by Curtis and Shinoda in \cite[Lemma 3.5]{curtis2004zeta}. However, their proof uses Deligne--Lusztig theory, while our proof only uses Green's character values for irreducible cuspidal representation of $\GL_n\left(\finiteField\right)$, see \cite[Section 6]{Gelfand70} and \cite[Section 3.1]{Nien17}. We also provide a formula for a simple value consisting of three blocks. This generalizes a formula of Chang for irreducible generic representations of $\GL_3\left(\finiteField\right)$ \cite{chang1976decomposition}.

\subsubsection{Special value formula for two blocks}

Fix an algebraic closure $\overline{\finiteField}$ of $\finiteField$. For every positive integer $n$, let $\finiteField_n$ be the unique extension of degree $n$ in $\overline{\finiteField}$. Let $\FieldNorm{n} \colon \multiplicativegroup{\finiteField_n} \rightarrow \multiplicativegroup{\finiteField}$ and $\FieldTrace{n} \colon \finiteField_n \rightarrow \finiteField$ be the norm and the trace maps, respectively. Let $\characterGroup{\multiplicativegroup{\finiteField_n}}$ be the character group consisting of all multiplicative characters $\alpha \colon \multiplicativegroup{\finiteField_n} \rightarrow \multiplicativegroup{\cComplex}$.

It is known that irreducible cuspidal representations of $\GL_n\left(\finiteField\right)$ are in a bijection with Frobenius orbits of size $n$ of $\characterGroup{\multiplicativegroup{\finiteField_n}}$, that is, every irreducible cuspidal representation $\pi$ of $\GL_n\left(\finiteField\right)$ corresponds to a set of size $n$ of the form $\{ \alpha, \alpha^q, \dots, \alpha^{q^{n-1}} \}$, where $\alpha \in \characterGroup{\multiplicativegroup{\finiteField_n}}$.

We first recall Nien's result regarding the computation of the Jacquet--Piatetski-Shapiro--Shalika gamma factor $\rsGammaFactor{\pi}{\chi}$ where $\pi$ is an irreducible cuspidal representation of $\GL_n\left(\finiteField\right)$ and $\chi$ is a representation of $\GL_1\left(\finiteField\right)$, that is, $\chi \colon \multiplicativegroup{\finiteField} \rightarrow \multiplicativegroup{\cComplex}$ is a multiplicative character. Nien's result expresses $\rsGammaFactor{\pi}{\chi}$ as a Gauss sum.

\begin{proposition}[{\cite[Theorem 1.1]{Nien14}}]\label{prop:nien-computation-of-GLn-GL1-for-cuspidal}
	Let $\pi$ be an irreducible cuspidal representation of $\GL_n\left(\finiteField\right)$ associated with the Frobenius orbit $\{\alpha, \alpha^q, \dots, \alpha^{q^{n-1}} \}$, where $\alpha \in  \characterGroup{\multiplicativegroup{\finiteField_n}}$. Let $\chi \colon \multiplicativegroup{\finiteField} \rightarrow \multiplicativegroup{\cComplex}$ be a multiplicative character. Then
	$$ \rsGammaFactor{\pi}{\chi} = \left(-1\right)^{n+1} \chi( -1 )^{n+1} q^{-n+1} \sum_{\xi \in \multiplicativegroup{\finiteField_n}} \alpha^{-1}\left( \xi \right) \chi^{-1}( \FieldNorm{n}(\xi) ) \fieldCharacter( \FieldTrace{n}(\xi) ).$$ 
\end{proposition}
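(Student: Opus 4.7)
The plan is to reduce the claim to an explicit evaluation of the Bessel function of $\pi$ on the ``cyclic'' matrices $\tau_n(h) := \begin{pmatrix} & \identityMatrix{n-1} \\ h & \end{pmatrix}$, and then to invoke Green's character formula for cuspidal representations. Applying \Cref{prop:rankin-selberg-gamma-factor-in-terms-of-bessel-functions} with $m = 1$, and using that the ``Bessel function'' of the one-dimensional representation $\chi$ of $\GL_1\left(\finiteField\right)$ is just $\chi$ itself (there is no unipotent subgroup), one obtains
\[
\rsGammaFactor{\pi}{\chi} = \sum_{h \in \multiplicativegroup{\finiteField}} \chi(h)\, \repBesselFunction{\pi}(\tau_n(h)),
\]
so the entire proposition reduces to establishing a Gauss-sum formula for $\repBesselFunction{\pi}(\tau_n(h))$.

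To produce such a formula I would use the trace-theoretic expression from \Cref{rem:inner-product-and-whittaker-vector},
\[
\repBesselFunction{\pi}(g) = \frac{1}{|\unipotentSubgroup_n|} \sum_{u \in \unipotentSubgroup_n} \fieldCharacter^{-1}(u)\, \trace \pi(u g),
\]
and analyze $u\tau_n(h)$ directly. A cofactor expansion along the first column shows that the characteristic polynomial of $u\tau_n(h)$ has the form $x^n + \beta_{n-1}x^{n-1} + \cdots + \beta_1 x - h$, that the map $u \mapsto (\beta_1,\dots,\beta_{n-1})$ is a surjection $\unipotentSubgroup_n \to \finiteField^{n-1}$ with constant fibers of size $q^{\binom{n}{2}-(n-1)}$, and that the superdiagonal sum $S(u) = \sum_{i=1}^{n-1} u_{i,i+1}$ satisfies $S(u) = -\beta_1/h$; in particular $\fieldCharacter^{-1}(u) = \fieldCharacter(-S(u))$ is constant on fibers. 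Because $\pi$ is cuspidal with regular parameter $\alpha$, the character $\trace \pi$ vanishes on classes whose characteristic polynomial is reducible over $\finiteField$; only $u$ for which $u\tau_n(h)$ has eigenvalues forming a single Frobenius orbit $\{\xi^{q^i}\}_{i=0}^{n-1}$ (with $\xi \in \multiplicativegroup{\finiteField_n}$ of $\finiteField$-degree $n$) contribute, and on such $u$ Green's formula yields $\trace \pi(u\tau_n(h)) = (-1)^{n-1}\sum_{i=0}^{n-1} \alpha(\xi^{q^i})$. Matching constant terms gives $\FieldNorm{n}(\xi) = (-1)^{n+1}h$, and the symmetric-function identity $e_{n-1}/e_n = \FieldTrace{n}(\xi^{-1})$ (evaluated on the Frobenius orbit), combined with $S(u) = -\beta_1/h = (-1)^n e_{n-1}/h$, gives $\fieldCharacter^{-1}(u) = \fieldCharacter(\FieldTrace{n}(\xi^{-1}))$.

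Assembling everything—using the fiber-size cancellation, converting the sum over Frobenius orbits into a sum over $\xi \in \multiplicativegroup{\finiteField_n}$ of degree $n$ (which absorbs the inner orbit-sum of $\alpha$), and performing the change of variable $\xi \mapsto \xi^{-1}$—one obtains
\[
\repBesselFunction{\pi}(\tau_n(h)) = (-1)^{n-1} q^{-(n-1)} \sum_{\substack{\xi \in \multiplicativegroup{\finiteField_n} \\ \FieldNorm{n}(\xi) = (-1)^{n+1} h^{-1}}} \alpha^{-1}(\xi)\, \fieldCharacter(\FieldTrace{n}(\xi)).
\]
Substituting this back, the identity $h = (-1)^{n+1} \FieldNorm{n}(\xi)^{-1}$ on the constraint locus converts $\chi(h)$ into $\chi(-1)^{n+1}\, \chi^{-1}(\FieldNorm{n}(\xi))$, and $(-1)^{n-1} = (-1)^{n+1}$ delivers the announced formula. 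The main obstacle is the matrix-theoretic step identifying the shape of the characteristic polynomial of $u\tau_n(h)$ together with the identity $S(u) = -\FieldTrace{n}(\xi^{-1})$; this is the part of the argument requiring careful sign-tracking through Newton-type symmetric-function identities applied to the Frobenius orbit of $\xi$.
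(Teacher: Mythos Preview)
The paper does not prove this proposition itself; it cites Nien, noting only that Nien's argument uses Green's character formula for irreducible cuspidal representations. Your approach---reduce to the Bessel value via the trace expression of \Cref{rem:inner-product-and-whittaker-vector} and then feed in Green's values---is therefore exactly in the spirit of the cited proof.

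There is, however, a genuine gap in your reasoning. The claim that the cuspidal character $\trace\pi$ vanishes on every class with reducible characteristic polynomial is false. Already for $n=2$ the cyclic element with characteristic polynomial $(x-r)^2$ (conjugate to $\left(\begin{smallmatrix}r&1\\0&r\end{smallmatrix}\right)$) has character value $-\alpha(r)\neq 0$. What Green's formula actually gives is that $\trace\pi$ vanishes on elements whose characteristic polynomial has at least two \emph{distinct} irreducible factors; for a cyclic element with characteristic polynomial $p^{n/d}$ ($p$ irreducible of degree $d\mid n$, with root $\xi$) the value is $(-1)^{n-1}\sum_{i=0}^{d-1}\alpha(\xi^{q^i})$. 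Your write-up silently discards these contributions (``only \ldots\ $\xi$ of $\finiteField$-degree $n$ contribute'') and then silently reinstates them when the degree restriction disappears in your final displayed Bessel formula. The two slips happen to cancel, so your end formula is correct, but the intermediate argument is not.

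The repair is routine once the correct character values are in hand: the $\beta$-term corresponding to characteristic polynomial $p^{n/d}$ contributes exactly the $d$ Gauss-sum terms attached to the orbit $\{\xi,\xi^{q},\ldots,\xi^{q^{d-1}}\}$, because the identities $\FieldNorm{n}(\xi)=\FieldNorm{d}(\xi)^{n/d}$ and $\FieldTrace{n}(\xi^{-1})=\tfrac{n}{d}\FieldTrace{d}(\xi^{-1})$ coincide with the elementary symmetric functions of the eigenvalues counted with multiplicity, so your relations $\FieldNorm{n}(\xi)=(-1)^{n+1}h$ and $S(u)=-\FieldTrace{n}(\xi^{-1})$ go through unchanged for these $\xi$ as well.
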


Nien's proof only uses Green's character formula for irreducible cuspidal representations, and does not use Deligne--Lusztig theory. 

We are ready to state our result regarding special two blocks values of the Bessel function.

\begin{theorem}\label{thm:bessel-function-special-value}
	Let $n>1$, and let $\pi$ be an irreducible generic representation of $\GL_n\left(\finiteField\right)$ with cuspidal support $\left\{\pi_1,\dots,\pi_r\right\}$, where for every $1 \le j \le r$,  $\pi_j$ is an irreducible cuspidal representation of $\GL_{n_j}\left(\finiteField\right)$ corresponding to the Frobenius orbit $\{\alpha_j, \alpha_j^q, \dots, \alpha_j^{q^{n_j - 1}}\}$, where $\alpha_j \in \characterGroup{\multiplicativegroup{\finiteField_{n_j}}}$ is a multiplicative character. Then for any $c \in \multiplicativegroup{\finiteField}$,
	$$ \repBesselFunction{\pi} \begin{pmatrix}
		& \identityMatrix{n - 1}\\
		c
	\end{pmatrix} = \left(-1\right)^{n + r} q^{-n + 1} \sum_{\substack{\xi_1 \in \multiplicativegroup{\finiteField_{n_1}}, \dots, \xi_r \in \multiplicativegroup{\finiteField_{n_r}} \\
\prod_{j=1}^r \FieldNorm{n_j}\left(\xi_j\right) = \left(-1\right)^{n-1} c^{-1}}} \prod_{j=1}^r \left(\alpha_j^{-1}\left(\xi_j\right)\fieldCharacter\left(\FieldTrace{n_j}\left(\xi_j\right)\right)\right).$$
\end{theorem}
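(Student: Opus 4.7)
The plan is to compute $\gammaFactor{\pi}{\chi}$ in two different ways for every character $\chi \colon \multiplicativegroup{\finiteField} \to \multiplicativegroup{\cComplex}$ and then apply Mellin inversion on $\multiplicativegroup{\finiteField}$ to recover the special Bessel values. Write $B\left(x\right) = \repBesselFunction{\pi}\begin{pmatrix}& \identityMatrix{n-1}\\ x\end{pmatrix}$ for $x \in \multiplicativegroup{\finiteField}$. The first key input is \Cref{thm:explicit-formula-for-gamma-factors-in-terms-of-bessel-functions} (case $n > m = 1$), which, using that $\unipotentSubgroup_1$ is trivial and that the Bessel function of the character $\chi^{-1}$ is $\chi^{-1}$ itself, becomes
$$ \gammaFactor{\pi}{\chi} = q^{n-1} \chi\left(-1\right) \sum_{x \in \multiplicativegroup{\finiteField}} B\left(x\right) \chi^{-1}\left(x\right). $$

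Next I would compute $\gammaFactor{\pi}{\chi}$ along the Langlands--Shahidi side. By multiplicativity (\Cref{thm:multiplicitavity-of-gamma-factors}) applied to a parabolic induction realizing the cuspidal support, $\gammaFactor{\pi}{\chi} = \prod_{j=1}^{r} \gammaFactor{\pi_j}{\chi}$. \Cref{cor:relation-between-intertwining-and-rankin-selberg} (uniformly in the two cases $n_j > 1$ and $n_j = 1$, both of which give the same power of $q$) yields $\gammaFactor{\pi_j}{\chi} = q^{n_j - 1} \chi\left(-1\right) \rsGammaFactor{\pi_j}{\chi^{-1}}$. Substituting Nien's Gauss-sum expression from \Cref{prop:nien-computation-of-GLn-GL1-for-cuspidal} for $\rsGammaFactor{\pi_j}{\chi^{-1}}$, the powers of $q$ cancel, and using $\chi\left(-1\right)^2 = 1$ the signs collapse to
$$ \gammaFactor{\pi_j}{\chi} = \left(-1\right)^{n_j + 1} \chi\left(-1\right)^{n_j} \sum_{\xi_j \in \multiplicativegroup{\finiteField_{n_j}}} \alpha_j^{-1}\left(\xi_j\right) \chi\left(\FieldNorm{n_j}\left(\xi_j\right)\right) \fieldCharacter\left(\FieldTrace{n_j}\left(\xi_j\right)\right). $$
Multiplying over $j$ and using $\sum_j n_j = n$ and $\sum_j \left(n_j+1\right) = n + r$ gives
$$ \gammaFactor{\pi}{\chi} = \left(-1\right)^{n+r} \chi\left(-1\right)^n \prod_{j=1}^{r} \sum_{\xi_j} \alpha_j^{-1}\left(\xi_j\right) \chi\left(\FieldNorm{n_j}\left(\xi_j\right)\right) \fieldCharacter\left(\FieldTrace{n_j}\left(\xi_j\right)\right). $$

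Equating the two expressions for $\gammaFactor{\pi}{\chi}$, dividing by $q^{n-1}\chi\left(-1\right)$, and then applying Mellin inversion on $\multiplicativegroup{\finiteField}$ --- that is, averaging $\chi\left(c\right)$ against the equality over all $\chi \in \characterGroup{\multiplicativegroup{\finiteField}}$ --- the left hand side becomes $B\left(c\right)$, and interchanging the orders of summation on the right converts the character sum into the orthogonality relation
$$ \frac{1}{q-1} \sum_{\chi} \chi\!\left(\left(-1\right)^{n-1} c \prod_j \FieldNorm{n_j}\left(\xi_j\right)\right) = \mathbf{1}_{\prod_j \FieldNorm{n_j}(\xi_j) = (-1)^{n-1} c^{-1}}. $$
This immediately collapses the product of Gauss sums into the single constrained sum in the statement, yielding the claimed formula.

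The main obstacle is bookkeeping: one must track three sources of signs and $q$-powers (from \Cref{cor:relation-between-intertwining-and-rankin-selberg}, \Cref{prop:nien-computation-of-GLn-GL1-for-cuspidal}, and the formula from \Cref{thm:explicit-formula-for-gamma-factors-in-terms-of-bessel-functions}) and verify they combine so that (i) the $q$-powers cancel to leave exactly $q^{-n+1}$, (ii) the $\chi\left(-1\right)$-factors cancel so that after Mellin inversion only an overall parity $\left(-1\right)^{n-1}$ survives inside the norm constraint, and (iii) the $\left(-1\right)$-signs aggregate to $\left(-1\right)^{n+r}$. A minor issue is to treat the components $\pi_j$ with $n_j = 1$ uniformly with those with $n_j > 1$, which works because the two cases of \Cref{cor:relation-between-intertwining-and-rankin-selberg} specialize to the same formula when $m=1$.
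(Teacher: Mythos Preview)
Your proposal is correct and follows essentially the same route as the paper: express $\gammaFactor{\pi}{\chi}$ via \Cref{thm:explicit-formula-for-gamma-factors-in-terms-of-bessel-functions}, expand it as a product of Gauss sums using multiplicativity together with \Cref{cor:relation-between-intertwining-and-rankin-selberg} and \Cref{prop:nien-computation-of-GLn-GL1-for-cuspidal}, and then invert on $\characterGroup{\multiplicativegroup{\finiteField}}$. The only cosmetic difference is that the paper multiplies by $\chi(-c)$ before averaging rather than first dividing by $\chi(-1)$ and then multiplying by $\chi(c)$; your bookkeeping of the signs, $q$-powers, and the $n_j=1$ case is accurate.
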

\begin{proof}
	By \Cref{thm:explicit-formula-for-gamma-factors-in-terms-of-bessel-functions}, we have that $$ \gammaFactor{\pi}{\chi} = q^{n - 1} \sum_{x \in \multiplicativegroup{\finiteField}} \repBesselFunction{\pi}\begin{pmatrix}
		& \identityMatrix{n-1}\\
		x
	\end{pmatrix} \chi^{-1}\left(-x\right).$$
	Multiplying by $\chi \left(-c\right)$ and averaging over all $\chi \in \characterGroup{\multiplicativegroup{\finiteField}}$, and using the fact that a sum of a non-trivial character on a group is zero, we get
	\begin{equation}\label{eq:mellin-transform-of-gamma-factor}
		\frac{1}{\sizeof{\multiplicativegroup{\finiteField}}}\sum_{\chi \in \characterGroup{\multiplicativegroup{\finiteField}}} \gammaFactor{\pi}{\chi} \chi\left(-c\right) =  q^{n-1} \repBesselFunction{\pi}\begin{pmatrix}
			& \identityMatrix{n-1}\\
			c
		\end{pmatrix}.
	\end{equation}
	By \Cref{cor:full-parabolic-induction-multiplicativity-property}, we have that $$\gammaFactor{\pi}{\chi} = \prod_{j=1}^r \gammaFactor{\pi_j}{\chi}.$$
	By \Cref{cor:relation-between-intertwining-and-rankin-selberg} and \Cref{prop:nien-computation-of-GLn-GL1-for-cuspidal}, we have that $$ \gammaFactor{\pi_j}{\chi} =   \left(-1\right)^{n_j+1} \chi( -1 )^{n_j} \sum_{\xi \in \multiplicativegroup{\finiteField_{n_j}}} \alpha_j^{-1}\left( \xi \right) \chi( \FieldNorm{n_j}(\xi) ) \fieldCharacter( \FieldTrace{n_j}(\xi) ).$$
	Therefore, we get that $\gammaFactor{\pi}{\chi}$ is given by
	\begin{equation}\label{eq:gamma-factor-for-gln-times-gl1-as-product-of-gauss-sums}
		\left(-1\right)^{n+r} \sum_{\xi_1 \in \multiplicativegroup{\finiteField_{n_1}}, \dots, \xi_r \in \multiplicativegroup{\finiteField_{n_r}}} \left(\prod_{j=1}^r \alpha_j^{-1}\left( \xi_j \right) \fieldCharacter(  \FieldTrace{n_j}(\xi_j) ) \right) \chi\left( ( -1 )^{n} \prod_{j=1}^r \FieldNorm{n_j}\left(\xi_j\right) \right).
	\end{equation}
	Substituting the expression \eqref{eq:gamma-factor-for-gln-times-gl1-as-product-of-gauss-sums} for $\gammaFactor{\pi}{\chi}$ in \eqref{eq:mellin-transform-of-gamma-factor}, and using the fact that a sum of a non-trivial of character over a group is zero, we get the desired result.
\end{proof}
\begin{remark}
	The expression for $\gammaFactor{\pi}{\chi}$ in \eqref{eq:gamma-factor-for-gln-times-gl1-as-product-of-gauss-sums} is originally due to Kondo \cite{Kondo1963}. He computed it for the Godement--Jacquet gamma factor. One can show directly that the Godement--Jacquet gamma factor coincides with the Shahidi gamma factor for representations for which both factors are defined. Our proof, which is based on Nien's result and on multiplicativity of gamma factors, is different than the one given by Kondo. See also another proof in \cite[Chapter IV, Section 6, Example 4]{macdonald1998symmetric}.
\end{remark}
\begin{remark}
	In \cite{zelingher2022values}, a vast generalization of the method in the proof of \Cref{thm:bessel-function-special-value} is used in order to find formulas for $$\repBesselFunction{\pi} \begin{pmatrix}
		& \identityMatrix{n-m}\\		
		c \identityMatrix{m}
	\end{pmatrix}.$$ However, \cite{zelingher2022values} relies on the results of \cite{ye2021epsilon}, which in turn rely on the local Langlands correspondence. The proof given here does not rely on such results.
\end{remark}

\subsubsection{Special value formula for three blocks}
In this subsection, we use our results to prove a formula for special values of the Bessel function, for a simple value consisting of three blocks. This generalizes a formula given by Chang \cite{chang1976decomposition} for $\GL_3\left(\finiteField\right)$, generalized later by Shinoda and Tulunay \cite{shinoda2005representations} to $\GL_4\left(\finiteField\right)$. Our proof is different from Chang's proof, which is based on the Gelfand--Graev algebra.

We start with the following proposition.

\begin{proposition}\label{prop:fourier-transform-of-bessel-function}
	Let $\pi$ be an irreducible generic representation of $\GL_n\left(\finiteField\right)$. Then for any $c \in \multiplicativegroup{\finiteField}$, and any $g \in \GL_n\left(\finiteField\right)$, we have
	\begin{align*}
	&	\repBesselFunction{\pi}\left(g\right) \repBesselFunction{\pi}\begin{pmatrix}
			& \identityMatrix{n-1}\\
			c
		\end{pmatrix} \\
	=& q^{-\left(n-1\right)} \sum_{\transpose{x} = \left(x_1,\dots,x_{n-1}\right) \in \finiteField^{n-1}}  \fieldCharacter \left(-x_{n-1} \right) \repBesselFunction{\pi}\left(g \begin{pmatrix}
			\identityMatrix{n-1} & x\\
			& 1
		\end{pmatrix} \begin{pmatrix}
			& \identityMatrix{n-1}\\
			c
		\end{pmatrix}\right).
	\end{align*}
\end{proposition}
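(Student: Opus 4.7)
The plan is to first establish a general \emph{reproducing identity}
$$\repBesselFunction{\pi}(g) \repBesselFunction{\pi}(h) = \frac{1}{\sizeof{\unipotentSubgroup_n}} \sum_{u \in \unipotentSubgroup_n} \fieldCharacter^{-1}(u) \repBesselFunction{\pi}(guh)$$
valid for all $g, h \in \GL_n(\finiteField)$, and then to specialize it to the permutation matrix $h$ appearing in the proposition. The reproducing identity is a direct consequence of the trace description of the Bessel function in \Cref{rem:inner-product-and-whittaker-vector}: letting $\operatorname{pr}$ denote the projection onto $\cComplex \whittakerVector{\pi}$, one has $\operatorname{pr} \circ \pi(g) \circ \operatorname{pr} = \repBesselFunction{\pi}(g) \cdot \operatorname{pr}$ since $\operatorname{pr}$ is a rank-one idempotent whose corresponding matrix coefficient is $\repBesselFunction{\pi}$. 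Composing with $\pi(h)$ and taking traces recovers $\repBesselFunction{\pi}(g)\repBesselFunction{\pi}(h)$ on one side; expanding the middle $\operatorname{pr}$ via the averaging formula from the same remark produces the right-hand sum.

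Now I specialize to $h = \left(\begin{smallmatrix} & \identityMatrix{n-1}\\ c\end{smallmatrix}\right)$. I parametrize each $u \in \unipotentSubgroup_n$ uniquely as
$$u = \begin{pmatrix}\identityMatrix{n-1} & x\\ 0 & 1\end{pmatrix}\begin{pmatrix}u_1 & 0\\ 0 & 1\end{pmatrix},$$
with $x = \transpose{(x_1,\dots,x_{n-1})} \in \finiteField^{n-1}$ and $u_1 \in \unipotentSubgroup_{n-1}$; under this decomposition $\fieldCharacter(u) = \fieldCharacter(x_{n-1}) \cdot \fieldCharacter(u_1)$. A direct block calculation shows
$$\begin{pmatrix}u_1 & 0\\ 0 & 1\end{pmatrix} h = h \begin{pmatrix}1 & 0\\ 0 & u_1\end{pmatrix},$$
and the right factor lies in $\unipotentSubgroup_n$ with $\fieldCharacter$-value equal to $\fieldCharacter(u_1)$, since its $(1,2)$-entry vanishes while its remaining superdiagonal entries coincide with those of $u_1$.

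Applying the right $(\unipotentSubgroup_n, \fieldCharacter)$-equivariance of $\repBesselFunction{\pi}$ then yields
$$\repBesselFunction{\pi}(guh) = \fieldCharacter(u_1) \repBesselFunction{\pi}\!\left(g \begin{pmatrix}\identityMatrix{n-1} & x\\ 0 & 1\end{pmatrix} h\right).$$
Inserting this into the reproducing identity, the factors $\fieldCharacter^{-1}(u_1)\fieldCharacter(u_1)$ cancel, and the inner sum over $u_1$ collapses to $\sizeof{\unipotentSubgroup_{n-1}}$. Combining this with $\sizeof{\unipotentSubgroup_{n-1}}/\sizeof{\unipotentSubgroup_n} = q^{-(n-1)}$ and rewriting $\fieldCharacter^{-1}(x_{n-1}) = \fieldCharacter(-x_{n-1})$ reproduces exactly the formula in the proposition.

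There is no substantive obstacle beyond verifying the block identity above: once the reproducing formula is in hand, the argument reduces to observing that conjugating the upper-unipotent $(n-1) \times (n-1)$ block through the permutation $h$ leaves $\fieldCharacter$ invariant, and this invariance provides the cancellation that restricts the effective summation from $\unipotentSubgroup_n$ down to the one-parameter family indexed by $x$.
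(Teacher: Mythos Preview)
Your proof is correct and takes a genuinely different route from the paper's. The paper derives this identity from its Shahidi gamma factor machinery: it specializes the operator-valued identity \eqref{eq:gamma-intermediate-m-smaller-than-n} to $m=1$ and $\sigma=\chi$, then performs a Mellin inversion by multiplying by $\chi(-c)$ and averaging over $\chi \in \characterGroup{\multiplicativegroup{\finiteField}}$, invoking \eqref{eq:mellin-transform-of-gamma-factor} to identify the left-hand side. Your argument bypasses the intertwining operator and the character sum entirely: the reproducing identity $\repBesselFunction{\pi}(g)\repBesselFunction{\pi}(h) = \sizeof{\unipotentSubgroup_n}^{-1}\sum_{u}\fieldCharacter^{-1}(u)\repBesselFunction{\pi}(guh)$ follows immediately from the rank-one idempotent description in \Cref{rem:inner-product-and-whittaker-vector}, and the conjugation identity for $\diag(u_1,1)$ through $h$ then collapses the $\unipotentSubgroup_{n-1}$ summation. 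Your approach is more elementary and self-contained, needing only \Cref{rem:inner-product-and-whittaker-vector} and the bi-equivariance of $\repBesselFunction{\pi}$; the paper's approach, while less direct here, is natural in context since the proposition is presented as a consequence of the gamma-factor theory just developed.
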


\begin{proof}
	Let $m=1$ and let $\sigma = \chi \colon \multiplicativegroup{\finiteField} \rightarrow \multiplicativegroup{\cComplex}$ be a multiplicative character. By \eqref{eq:gamma-intermediate-m-smaller-than-n}, we have
	\begin{equation*}
		\gammaFactor{\pi}{\chi} \repBesselFunction{\pi}  = \sum_{\substack{\transpose{x} \in \finiteField^{n-1} \\
				a \in \multiplicativegroup{\finiteField}}} \chi\left(-a^{-1}\right) \fieldCharacter\begin{pmatrix}
			I_{n-1} & -a^{-1} x\\
			& 1
		\end{pmatrix} \pi \begin{pmatrix}
			x &  I_{n-1}\\
			a
		\end{pmatrix} \repBesselFunction{\pi}.
	\end{equation*}
	We multiply by $\chi\left(-c\right)$ and average over $\chi \in \characterGroup{\multiplicativegroup{\finiteField}}$. Using the fact that a sum of a non-trivial character over a group is zero, and using \eqref{eq:mellin-transform-of-gamma-factor}, we get
	$$ 		q^{n-1} \repBesselFunction{\pi} \begin{pmatrix}
		& \identityMatrix{n-1}\\
		c
	\end{pmatrix} \repBesselFunction{\pi}  = \sum_{\transpose{x} = \left(x_1,\dots,x_{n-1}\right) \in \finiteField^{n-1}} \fieldCharacter \left(-c^{-1} x_{n-1} \right) \pi \begin{pmatrix}
		x &  I_{n-1}\\
		c
	\end{pmatrix} \repBesselFunction{\pi}. $$
	Using the decomposition $$\begin{pmatrix}
		x &  I_{n-1}\\
		c
	\end{pmatrix} = \begin{pmatrix}
		I_{n-1} & c^{-1} x\\
		& 1
	\end{pmatrix} \begin{pmatrix}
		& I_{n-1}\\
		c
	\end{pmatrix},$$ and changing the summation variable $x$ to $c \cdot x$, we get the desired result.
\end{proof}

\begin{theorem}\label{thm:three-blocks-special-bessel-value}Suppose $n \ge 3$. Then for any irreducible generic representation $\pi$ of $\GL_n\left(\finiteField\right)$ and any $c, c' \in \multiplicativegroup{\finiteField}$, we have
	\begin{align*}
		\repBesselFunction{\pi} \begin{pmatrix}
			& & - c'\\
			& \identityMatrix{n-2} & \\
			c
		\end{pmatrix}=
			&\sum_{s \in \multiplicativegroup{\finiteField}} \repBesselFunction{\pi} \begin{pmatrix}
		& \identityMatrix{n-1}\\
		s^{-1} c
	\end{pmatrix} \repBesselFunction{\pi} \begin{pmatrix}
		& s c' \\
		\identityMatrix{n-1}
	\end{pmatrix} \left(\fieldCharacter\left(s\right) - 1\right) + \frac{\delta_{c c',1}}{q^{n-2}},
\end{align*}
where $$ \delta_{c c',1} = \begin{dcases}
	1 & c c' = 1,\\
	0 & \text{otherwise.}
\end{dcases} $$
\end{theorem}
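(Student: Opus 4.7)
The plan is to apply Proposition \ref{prop:fourier-transform-of-bessel-function} with $c$ replaced by $s^{-1}c$ and $g = \begin{pmatrix} & sc' \\ \identityMatrix{n-1} \end{pmatrix}$ for each $s \in \multiplicativegroup{\finiteField}$, then sum over $s$ weighted by $\fieldCharacter(s) - 1$. A direct block-matrix calculation yields
$$g \begin{pmatrix} \identityMatrix{n-1} & x \\ & 1 \end{pmatrix} \begin{pmatrix} & \identityMatrix{n-1} \\ s^{-1}c \end{pmatrix} = \begin{pmatrix} cc' & 0 \\ s^{-1}cx & \identityMatrix{n-1} \end{pmatrix},$$
so the proposition expresses $\repBesselFunction{\pi}\begin{pmatrix} & sc' \\ \identityMatrix{n-1} \end{pmatrix} \repBesselFunction{\pi}\begin{pmatrix} & \identityMatrix{n-1} \\ s^{-1}c \end{pmatrix}$ as a $q^{-(n-1)}$-weighted sum over $x \in \finiteField^{n-1}$ of $\repBesselFunction{\pi}(M_u)$, where $M_u := \begin{pmatrix} cc' & 0 \\ u & \identityMatrix{n-1} \end{pmatrix}$ and $u = s^{-1}cx$.

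After changing variable from $x$ to $u$ and interchanging the sums over $s$ and $u$, the inner sum $\sum_{s \in \multiplicativegroup{\finiteField}}(\fieldCharacter(s) - 1)\fieldCharacter(-sc^{-1}u_{n-1})$ decomposes into two sums of the form $\sum_{s \in \multiplicativegroup{\finiteField}} \fieldCharacter(sa) = q\delta_{a, 0} - 1$, evaluating to $q(\delta_{u_{n-1}, c} - \delta_{u_{n-1}, 0})$. Hence the theorem reduces to the two identities
$$\sum_{u:\, u_{n-1} = c} \repBesselFunction{\pi}(M_u) = q^{n-2}\repBesselFunction{\pi}(g_0), \qquad \sum_{u:\, u_{n-1} = 0} \repBesselFunction{\pi}(M_u) = \delta_{cc', 1}.$$

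For the first identity I would construct, for each $u$ with $u_{n-1} = c$, upper unipotent matrices $L$ (adding multiples of row $n$ to rows $1, \ldots, n-1$) and $R$ (adding multiples of column $1$ and columns $2, \ldots, n-1$ to column $n$) satisfying $LM_uR = g_0$; the only superdiagonal entries of $L$ and $R$ occur at position $(n-1, n)$ and equal $-u_{n-2}/c$ and $u_{n-2}/c$ respectively, so $\fieldCharacter(L)\fieldCharacter(R) = 1$ and $\repBesselFunction{\pi}(M_u) = \repBesselFunction{\pi}(g_0)$; summing over the $q^{n-2}$ choices of $(u_1, \ldots, u_{n-2})$ gives the claim. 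For the second identity, write $u = (u', 0)$ with $u' \in \finiteField^{n-2}$. When $u' = 0$, $M_u = \diag(cc', 1, \ldots, 1)$, and $\fieldCharacter$-equivariance applied to $(I + tE_{1, 2}) M_u = M_u (I + t(cc')^{-1}E_{1, 2})$ yields $\repBesselFunction{\pi}(M_u) = \delta_{cc', 1}$. The main obstacle is showing $\repBesselFunction{\pi}(M_u) = 0$ when $u' \ne 0$: letting $k$ be the largest index with $u_k \ne 0$, a short row/column reduction using $u_k$ as pivot brings $M_u$ to $d\tau$ with $\tau$ the transposition swapping positions $1$ and $k+1$ and $d$ a torus element; then $\fieldCharacter$-equivariance under $\{I + tE_{k+1, k+2}\}_{t \in \finiteField}$ (which is well-defined since $k \le n-2$) together with the identity $(d\tau)^{-1}(I + tE_{k+1, k+2})(d\tau) = I + t(d_{k+1}^{-1}d_{k+2})E_{1, k+2}$ gives $\fieldCharacter(t)\repBesselFunction{\pi}(d\tau) = \repBesselFunction{\pi}(d\tau)$ for all $t$, the right-hand character being trivial since $(1, k+2)$ is not superdiagonal for $k \ge 1$; this forces $\repBesselFunction{\pi}(d\tau) = 0$, hence $\repBesselFunction{\pi}(M_u) = 0$. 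Assembling the two evaluations gives the theorem.
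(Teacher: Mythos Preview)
Your proof is correct and follows essentially the same route as the paper's: both start from Proposition~\ref{prop:fourier-transform-of-bessel-function} applied with $g=\left(\begin{smallmatrix} & sc' \\ I_{n-1} & \end{smallmatrix}\right)$, use the matrix decomposition that turns $M_u$ (for $u_{n-1}\ne 0$) into the three-block antidiagonal element with the two unipotent factors contributing cancelling $\fieldCharacter$-values, and then perform a Fourier inversion over $s\in\multiplicativegroup{\finiteField}$.

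The only differences are organizational. The paper first splits the $x$-sum according to $x_{n-1}=0$ versus $x_{n-1}=t\ne 0$, \emph{then} substitutes $c\mapsto s^{-1}c$, $c'\mapsto sc'$, and finally computes two separate $s$-sums (one unweighted, one weighted by $\fieldCharacter(s)$) and subtracts; you instead insert the weight $\fieldCharacter(s)-1$ from the outset and swap the order of summation, which collapses the inner $s$-sum to the pair of Kronecker deltas $\delta_{u_{n-1},c}-\delta_{u_{n-1},0}$ and isolates the two identities directly. For the vanishing when $u_{n-1}=0$ and $u'\ne 0$, the paper simply invokes \cite[Lemma~2.14]{Nien14} (the Bessel function vanishes on $\mirabolic_n\setminus\unipotentSubgroup_n$), whereas you give a self-contained argument via reduction to $d\tau$ and the conjugation identity with $I+tE_{k+1,k+2}$; your argument is a special case of that lemma but has the virtue of being explicit.
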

\begin{proof}
	We substitute $g = \begin{pmatrix}
		& c'\\
		\identityMatrix{n-1}
	\end{pmatrix}$
in \Cref{prop:fourier-transform-of-bessel-function} to get
$$ \repBesselFunction{\pi} \begin{pmatrix}
	& \identityMatrix{n-1}\\
	c
\end{pmatrix} \repBesselFunction{\pi} \begin{pmatrix}
& c'\\
\identityMatrix{n-1}
\end{pmatrix} = q^{-\left(n-1\right)} \sum_{\transpose{x} = \left(x_1,\dots,x_{n-1}\right) \in \finiteField^{n-1}}  \fieldCharacter \left(-x_{n-1} \right) \repBesselFunction{\pi}\begin{pmatrix}cc' & \\
cx & I_{n-1}
\end{pmatrix}.$$

If $x_{n-1} = 0$, then $\begin{pmatrix}cc' & \\
	cx & I_{n-1}
\end{pmatrix}$ lies in the mirabolic subgroup. By \cite[Lemma 2.14]{Nien14}, we have that the Bessel function is zero for elements in the mirabolic subgroup that do not lie in the upper unipotent subgroup $\unipotentSubgroup_n$. Therefore, we get that if $x_{n-1} = 0$, then $x = 0$ and $$ \fieldCharacter \left(x_{n-1} \right) \repBesselFunction{\pi}\begin{pmatrix}cc' & \\
cx & I_{n-1}
\end{pmatrix} = \delta_{c c',1}.$$

Suppose now that $x_{n-1} = t \ne 0$. Denote $\transpose{x'} = \left(x_1, \dots, x_{n-2}\right) \in \finiteField^{n-2}$. Then we have

$$ \begin{pmatrix}cc' & \\
	cx & I_{n-1}
\end{pmatrix} = \begin{pmatrix}
1 & 0 & t^{-1} c'\\
& \identityMatrix{n-2} & t^{-1} x'\\
 &  & 1
\end{pmatrix} 
\begin{pmatrix}
	& & -t^{-1} c'\\
	& \identityMatrix{n-2} & \\
	tc
\end{pmatrix}
\begin{pmatrix}
	1 & 0 & \left(tc\right)^{-1}\\
	& \identityMatrix{n-2} & -t^{-1} x'\\
	&  & 1
\end{pmatrix}.$$
Since we have $q^{n-2}$ elements in $\finiteField^{n-1}$ with $x_{n-1} = t$, we get that
\begin{equation*}
	\repBesselFunction{\pi} \begin{pmatrix}
		& \identityMatrix{n-1}\\
		c
	\end{pmatrix} \repBesselFunction{\pi} \begin{pmatrix}
		& c'\\
		\identityMatrix{n-1}
	\end{pmatrix} = \frac{\delta_{c c',1}}{q^{n-1}} + q^{-1} \sum_{t \in \multiplicativegroup{\finiteField}}  \fieldCharacter \left(-t \right) \repBesselFunction{\pi} \begin{pmatrix}
		& & -t^{-1} c'\\
		& \identityMatrix{n-2} & \\
		tc
	\end{pmatrix}.
\end{equation*}
We proceed as in \cite[Page 379]{chang1976decomposition} and \cite[Lemma 4.2]{shinoda2005representations}. We replace $c$ with $s^{-1} c$ and $c'$ with $s c'$, where $s \in \multiplicativegroup{\finiteField}$, to get
\begin{equation}\label{eq:product-of-2-bessel-scaled}
	 \repBesselFunction{\pi} \begin{pmatrix}
		& \identityMatrix{n-1}\\
		s^{-1} c
	\end{pmatrix} \repBesselFunction{\pi} \begin{pmatrix}
		& s c'\\
		\identityMatrix{n-1}
	\end{pmatrix} = \frac{\delta_{c c',1}}{q^{n-1}} + q^{-1} \sum_{t \in \multiplicativegroup{\finiteField}}  \fieldCharacter \left(-s t \right) \repBesselFunction{\pi} \begin{pmatrix}
		& & -t^{-1} c'\\
		& \identityMatrix{n-2} & \\
		tc
	\end{pmatrix}.
\end{equation}
Summing \eqref{eq:product-of-2-bessel-scaled} over $s \in \multiplicativegroup{\finiteField}$, we get
\begin{equation}\label{eq:first-convolution-of-2-bessel-scaled}
	\sum_{s \in \multiplicativegroup{\finiteField}} \repBesselFunction{\pi} \begin{pmatrix}
		& \identityMatrix{n-1}\\
		s^{-1} c
	\end{pmatrix} \repBesselFunction{\pi} \begin{pmatrix}
		& s c'\\
		\identityMatrix{n-1}
	\end{pmatrix} = \frac{q-1}{q^{n-1}} \delta_{c c',1} - q^{-1} \sum_{t \in \multiplicativegroup{\finiteField}} \repBesselFunction{\pi} \begin{pmatrix}
		& & -t^{-1} c'\\
		& \identityMatrix{n-2} & \\
		tc
	\end{pmatrix}.
\end{equation}
Multiplying \eqref{eq:product-of-2-bessel-scaled} by $\psi\left(s\right)$ and summing over $s \in \multiplicativegroup{\finiteField}$, we get
\begin{equation}\label{eq:convolution-of-2-block-bessel} 
\begin{split}
		&\sum_{s \in \multiplicativegroup{\finiteField}} \repBesselFunction{\pi} \begin{pmatrix}
		& \identityMatrix{n-1}\\
		s^{-1} c
	\end{pmatrix} \repBesselFunction{\pi} \begin{pmatrix}
		& s c' \\
		\identityMatrix{n-1}
	\end{pmatrix} \fieldCharacter\left(s\right)\\
	=& -\frac{\delta_{c c',1}}{q^{n-1}} + \frac{q-1}{q} \repBesselFunction{\pi} \begin{pmatrix}
		& & - c'\\
		& \identityMatrix{n-2} & \\
		c
	\end{pmatrix} - q^{-1}\sum_{1 \ne t \in \multiplicativegroup{\finiteField}} \repBesselFunction{\pi} \begin{pmatrix}
		& & -t^{-1} c'\\
		& \identityMatrix{n-2} & \\
		tc
	\end{pmatrix}.
\end{split}
\end{equation}
Subtracting \eqref{eq:first-convolution-of-2-bessel-scaled} from \eqref{eq:convolution-of-2-block-bessel}, we get the desired result.
\end{proof}

\begin{remark}
	Using the formulas in \Cref{thm:bessel-function-special-value} and its proof, one can show that if the cuspidal support of $\pi$ does not contain any irreducible representation of $\GL_1\left(\finiteField\right)$, then we have a simpler formula:
	\begin{equation}\label{eq:simpler-bessel-3-block-formula}
		\repBesselFunction{\pi} \begin{pmatrix}
			& & - c'\\
			& \identityMatrix{n-2} & \\
			c
		\end{pmatrix}=
		\sum_{s \in \multiplicativegroup{\finiteField}} \repBesselFunction{\pi} \begin{pmatrix}
			& \identityMatrix{n-1}\\
			s^{-1} c
		\end{pmatrix} \repBesselFunction{\pi} \begin{pmatrix}
			& s c' \\
			\identityMatrix{n-1}
		\end{pmatrix}\fieldCharacter\left(s\right).
	\end{equation}
	However, if the cuspidal support of $\pi$ contains irreducible representations of $\GL_1\left(\finiteField\right)$, this simpler formula does not hold.
\end{remark}
\begin{remark}
	Using the expression in \Cref{thm:bessel-function-special-value}, we have that the expression on the right hand side of \eqref{eq:simpler-bessel-3-block-formula} is an exponential sum that generalizes the Friedlander--Iwaniec character sum, see \cite[Theorem 7.3, formula (27) and Remark 7.4]{kowalski2014gaps}. The Friedlander--Iwaniec character sum played a role in Zhang's work on the twin prime conjecture \cite{zhang2014bounded}.
\end{remark}

\appendix

\section{Computation of $\rsGammaFactor{\pi}{\contragredient{\pi}}$ when $\pi$ is cuspidal}\label{appendix:computation-of-rankin-selberg-pi-pi-dual}

In this appendix, we compute the Jacquet--Piatetski-Shapiro--Shalika gamma factor ${\rsGammaFactor{\pi}{\sigma}}$ in the special case where $\pi$ and $\sigma$ are irreducible cuspidal representations of $\GL_n\left(\finiteField\right)$ and $\pi \cong \contragredient{\sigma}$. We will prove the following theorem.

\begin{theorem}\label{thm:rankin-selberg-gamma-factor-of-pi-times-pi-dual}
	Let $\pi$ be an irreducible cuspidal representation of $\GL_n\left(\finiteField\right)$. Then
	$$ \rsGammaFactor{\pi}{\contragredient{\pi}} = -1.$$
\end{theorem}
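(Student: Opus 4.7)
The plan is to exploit the defining functional equation (\Cref{thm:rankin-selberg-m-equals-n-gamma-factor-definition}) with a carefully chosen Schwartz function $\phi \in \SchwartzSpacen$. A key subtlety is that when $\pi \cong \contragredient{\sigma}$, i.e.\ $\sigma = \contragredient{\pi}$, the extended functional equation given in \Cref{prop:extended-functional-equation-n-equals-m} is \emph{not} available to us. The pairing $Z(W,W',1;\fieldCharacter)$ is no longer identically zero, and we turn this obstruction into the main input of the computation.

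Take $W = \repBesselFunction{\pi}$, $W' = \grepBesselFunction{\contragredient{\pi}}{\fieldCharacter^{-1}}$, and set $\phi = 1 - \delta_0$, where $1$ is the constant function and $\delta_0$ is the indicator of $0 \in \finiteField^n$. Then $\phi(0) = 0$, so \Cref{thm:rankin-selberg-m-equals-n-gamma-factor-definition} applies, and a direct computation gives $\fourierTransform{\fieldCharacter}{\phi} = q^n \delta_0 - 1$. Both $\delta_0$ terms in the resulting zeta integrals vanish, because $e_n g \neq 0$ for every $g \in \GL_n(\finiteField)$, so $Z(W,W',\delta_0;\fieldCharacter) = 0$ and $Z(\tilde{W},\tilde{W}',\delta_0;\fieldCharacter^{-1}) = 0$. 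The functional equation therefore collapses to
$$- Z(\tilde{W}, \tilde{W}', 1; \fieldCharacter^{-1}) = \rsGammaFactor{\pi}{\contragredient{\pi}} \cdot Z(W, W', 1; \fieldCharacter).$$

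Next, I would verify that these two remaining integrals coincide. Using $\tilde{W}(g) = W(\weyllong_n \outerInvolution{g})$, lift the quotient sum to a sum over all of $\GL_n(\finiteField)$ (with a $|\unipotentSubgroup_n|^{-1}$ factor) and apply the change of variables $h = \weyllong_n \outerInvolution{g}$, which is a bijection of $\GL_n(\finiteField)$; this transforms the left-hand integral into the right-hand one, yielding $Z(\tilde{W}, \tilde{W}', 1; \fieldCharacter^{-1}) = Z(W, W', 1; \fieldCharacter)$. Finally, by \Cref{prop:contragredient-complex-conjugate}, $W'(g) = \conjugate{\repBesselFunction{\pi}(g)}$, and hence
$$Z(W, W', 1; \fieldCharacter) = \sum_{g \in \unipotentSubgroup_n \backslash \GL_n(\finiteField)} \abs{\repBesselFunction{\pi}(g)}^2 \geq \abs{\repBesselFunction{\pi}(\identityMatrix{n})}^2 = 1 > 0.$$
Dividing yields $\rsGammaFactor{\pi}{\contragredient{\pi}} = -1$. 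The only non-routine step is choosing the right test function $\phi$; everything else is bookkeeping.
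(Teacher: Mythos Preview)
Your proof is correct and takes a genuinely different route from the paper's. The paper proceeds by first establishing a general lemma (\Cref{lem:gamma-factors-exceptional-case}) about finite groups $G \supseteq H = N \rtimes \GL_n(\finiteField)$ satisfying certain one-dimensionality conditions, and then instantiates it with $G = \GL_n(\finiteField) \times \GL_n(\finiteField)$, $H$ the diagonal, and $\tau = \whittaker(\pi,\fieldCharacter) \otimes \whittaker(\contragredient{\pi},\fieldCharacter^{-1})$; verifying the hypotheses there requires that $\representationRes{\mirabolic_n}\pi$ is irreducible for cuspidal $\pi$ and that the Bessel function vanishes on $\mirabolic_n \setminus \unipotentSubgroup_n$. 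Your argument bypasses all of this structure: you plug $\phi = 1 - \delta_0$ directly into the functional equation of \Cref{thm:rankin-selberg-m-equals-n-gamma-factor-definition}, reduce both sides to $Z(W,W',1;\fieldCharacter)$, and observe that this is strictly positive because it is $\sum_g \abs{\repBesselFunction{\pi}(g)}^2$. Your approach is shorter and more elementary for this particular statement, needing only \Cref{prop:contragredient-complex-conjugate} beyond the functional equation itself; the paper's approach, by contrast, isolates a reusable lemma (stated explicitly ``for future purposes'') that applies to other $(G,H)$ pairs and other representations, at the cost of invoking more representation-theoretic input in the verification.
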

This was done in \cite[Corollary 4.3]{Ye18}. We provide another proof, since the proof in \cite{Ye18} relies on results of representations of $p$-adic groups.

For future purposes, we will prove the following general lemma. We will show that \Cref{thm:rankin-selberg-gamma-factor-of-pi-times-pi-dual} follows from it.

We denote by $\mirabolic_n \le \GL_n \left( \finiteField \right)$ the mirabolic subgroup.
\begin{lemma}\label{lem:gamma-factors-exceptional-case}
	Let $G$ be a finite group and let $H \le G$ be a subgroup. Suppose that $H$ is a semi-direct product of the form $H = N \rtimes \GL_n\left(\finiteField\right)$. Let $\Psi \colon H \rightarrow \multiplicativegroup{\cComplex}$ be a character which is trivial on $\GL_n\left(\finiteField\right)$. Let $\tau$ be an irreducible representation of $G$, such that
	\begin{enumerate}
		\item \label{item:hom-space-H-is-one-dimensional} $ \dim \Hom_{H}\left( \representationRes{H} \tau, \Psi \right) = 1$.
		\item $ \dim \Hom_{ N \rtimes \mirabolic_n }\left( \representationRes{ N \rtimes \mirabolic_n} \tau, \representationRes{N \rtimes \mirabolic_n } \Psi \right) = 1$.
		\item There exists a functional $\ell \in \Hom_{\unipotentSubgroup_n}\left(\representationRes{\unipotentSubgroup_n} \tau, \cComplex\right)$ and a vector $v_0 \in \tau$, such that
		$$ \sum_{p \in \unipotentSubgroup_n \backslash \mirabolic_n} \sum_{n \in N} \ell\left( \tau\left(n p\right) v_0 \right) \Psi^{-1}\left(n\right) = 1.$$
	\end{enumerate}
Then
$$ \sum_{g \in \unipotentSubgroup_n \backslash \GL_n\left(\finiteField\right)} \sum_{n \in N} \ell\left( \tau\left(n g\right) v_0 \right) \Psi^{-1}\left(n\right) \fieldCharacter\left(\bilinearPairing{e_n g}{e_1}\right) = -1.$$
\end{lemma}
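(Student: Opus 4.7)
The plan is to promote, via the uniqueness hypotheses (1) and (2), a functional obtained by averaging $\ell$ over the mirabolic from $(N \rtimes \mirabolic_n)$-equivariance to full $H$-equivariance, and then to reduce the target expression to a trivial character sum on $\finiteField^n \setminus \{0\}$.

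First, I would set $W(g) = \sum_{n \in N} \ell(\tau(ng) v_0) \Psi^{-1}(n)$ and define the averaged functional
$$ L(v) = \sum_{p \in \unipotentSubgroup_n \backslash \mirabolic_n} \sum_{n \in N} \ell(\tau(np) v) \Psi^{-1}(n).$$
By a direct change-of-variables argument, using that $N$ is normal in $H$ and that $\Psi|_N$ is $\GL_n(\finiteField)$-conjugation invariant (it extends to a character of $H$ which is trivial on $\GL_n(\finiteField)$), I would verify that $L \in \Hom_{N \rtimes \mirabolic_n}(\representationRes{N \rtimes \mirabolic_n} \tau, \representationRes{N \rtimes \mirabolic_n} \Psi)$. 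This step is the technical heart of the proof, since it requires carefully tracking the noncommutativity when exchanging $p$ past elements of $N$.

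Next, conditions (1) and (2) say that the one-dimensional space $\Hom_H(\representationRes{H}\tau, \Psi)$ sits inside the one-dimensional space $\Hom_{N \rtimes \mirabolic_n}(\representationRes{N \rtimes \mirabolic_n}\tau, \representationRes{N \rtimes \mirabolic_n} \Psi)$, forcing the two to coincide. Hence $L$ is automatically $H$-equivariant, and since $\Psi$ is trivial on $\GL_n(\finiteField)$, the functional $L$ is invariant under right translation by $\GL_n(\finiteField)$. Combined with hypothesis (3), namely $L(v_0) = 1$, this yields the key identity
$$\sum_{p \in \unipotentSubgroup_n \backslash \mirabolic_n} W(pg) \;=\; L(\tau(g) v_0) \;=\; L(v_0) \;=\; 1 \quad \text{for every } g \in \GL_n(\finiteField).$$

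Finally, I would invoke the bijection $\mirabolic_n \backslash \GL_n(\finiteField) \longleftrightarrow \finiteField^n \setminus \{0\}$ sending $\mirabolic_n g \mapsto e_n g$. Choosing representatives $g_\xi$ with $e_n g_\xi = \xi$, the coset space $\unipotentSubgroup_n \backslash \GL_n(\finiteField)$ decomposes as a disjoint union over $\xi \in \finiteField^n \setminus \{0\}$ of shifted copies of $\unipotentSubgroup_n \backslash \mirabolic_n$. Because $\fieldCharacter(\bilinearPairing{e_n g}{e_1})$ depends only on $\xi = e_n g$, the target sum factors through $\xi$, and by the identity above each inner sum equals $1$. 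The expression collapses to
$$ \sum_{\xi \in \finiteField^n \setminus \{0\}} \fieldCharacter(\xi_1) \;=\; \left( \sum_{\xi \in \finiteField^n} \fieldCharacter(\xi_1) \right) - 1 \;=\; -1,$$
where the bracketed sum vanishes by nontriviality of $\fieldCharacter$. The hard part, as noted, is Step 1: once $L$ is known to be $(N \rtimes \mirabolic_n)$-equivariant, the uniqueness-based upgrade is immediate and the rest is coset bookkeeping.
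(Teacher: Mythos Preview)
Your proposal is correct and follows essentially the same approach as the paper: define $L$ by averaging over $\unipotentSubgroup_n \backslash \mirabolic_n$ and $N$, use the equality of the two one-dimensional Hom spaces to upgrade $L$ to full $H$-equivariance, and then collapse the target sum via the bijection $\mirabolic_n \backslash \GL_n(\finiteField) \cong \finiteField^n \setminus \{0\}$. Your final character-sum computation $\sum_{\xi \ne 0}\fieldCharacter(\xi_1) = -1$ is slightly more direct than the paper's, which routes through the center of $\GL_n(\finiteField)$ before reaching the same count.
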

\begin{remark}
	If $\multiplicativegroup{\finiteField} \le H$ lies in the center of $G$, then (\ref{item:hom-space-H-is-one-dimensional}) implies that the restriction of the central character of $\tau$ to $\multiplicativegroup{\finiteField} \le H$ is trivial.
\end{remark}

\begin{proof}
	Notice that we have a containment 
	$$ \Hom_{H}\left( \representationRes{H} \tau, \Psi \right) \subset \Hom_{ N \rtimes \mirabolic_n }\left( \representationRes{ N \rtimes \mirabolic_n} \tau, \representationRes{N \rtimes \mirabolic_n } \Psi \right).$$
	Since both spaces are one dimensional, we have that they are equal. Denote for $v \in \tau$, $$ L\left(v\right) = \sum_{p \in \unipotentSubgroup_n \backslash \mirabolic_n} \sum_{n \in N} \ell\left( \tau\left(n p\right) v \right) \Psi^{-1}\left(n\right).$$ Then $L \in \Hom_{ N \rtimes \mirabolic_n }\left( \representationRes{ N \rtimes \mirabolic_n} \tau, \representationRes{N \rtimes \mirabolic_n } \Psi \right)$  and $L \ne 0$ because $L\left(v_0\right) = 1$. Therefore, $L \in \Hom_{H}\left( \representationRes{H} \tau, \Psi \right)$, which implies that $L\left(\tau\left(g\right) v \right) = L\left(v\right)$ for any $v \in \tau$, and any $g \in \GL_n\left(\finiteField\right)$. 
	
	Denote $$ S = \sum_{g \in \unipotentSubgroup_n \backslash \GL_n\left(\finiteField\right)} \sum_{n \in N} \ell\left( \tau\left(n g\right) v_0 \right) \Psi^{-1}\left(n\right) \fieldCharacter\left(\bilinearPairing{e_n g}{e_1}\right).$$ We have
	$$ S = \sum_{g \in \mirabolic_n \backslash \GL_n\left(\finiteField\right)} L\left(\tau\left(g\right) v_0\right) \fieldCharacter\left(\bilinearPairing{e_n g}{e_1}\right) = \sum_{g \in \mirabolic_n \backslash \GL_n\left(\finiteField\right)} \fieldCharacter\left(\bilinearPairing{e_n g}{e_1}\right).$$
	We decompose this sum through the center of $\GL_n\left(\finiteField\right)$
	$$ S = \sum_{g \in \left(\multiplicativegroup{\finiteField} \cdot \mirabolic_n\right) \backslash \GL_n\left(\finiteField\right)} \sum_{a \in \multiplicativegroup{\finiteField}} \fieldCharacter\left(\bilinearPairing{e_n a g}{e_1}\right).$$
	We have that for $t \in \finiteField$, $$ \sum_{a \in \multiplicativegroup{\finiteField}} \fieldCharacter\left(at\right) = \begin{dcases}
		-1 & t \ne 0,\\
		q-1 & t = 0.
	\end{dcases}$$
	Therefore, we get \begin{align*}
		S =& \left(q-1\right)\sum_{\substack{g \in \left(\multiplicativegroup{\finiteField} \cdot \mirabolic_n\right) \backslash \GL_n\left(\finiteField\right)\\
				\bilinearPairing{e_n g}{e_1} = 0}} 1 - \sum_{\substack{g \in \left(\multiplicativegroup{\finiteField} \cdot \mirabolic_n\right) \backslash \GL_n\left(\finiteField\right)\\
				\bilinearPairing{e_n g}{e_1} \ne 0}} 1,
	\end{align*}
which we rewrite as
\begin{align*}
	S =& \sum_{\substack{g \in \mirabolic_n \backslash \GL_n\left(\finiteField\right)\\
			\bilinearPairing{e_n g}{e_1} = 0}} 1 - \frac{1}{\sizeof{\multiplicativegroup{\finiteField}}} \sum_{\substack{g \in \mirabolic_n \backslash \GL_n\left(\finiteField\right)\\
			\bilinearPairing{e_n g}{e_1} \ne 0}} 1.
\end{align*}

	Consider the right action of $\GL_n\left(\finiteField\right)$ on $\finiteField^n \setminus \left\{0\right\}$. This action is transitive. The stabilizer of $e_n$ is the mirabolic subgroup $\mirabolic_n$. Therefore, for $x = \left(x_1,\dots,x_n\right) \in \finiteField^n \setminus \left\{0\right\}$, we have that $$S_x = \sum_{\substack{g \in \mirabolic_n \backslash \GL_n\left(\finiteField\right)\\
			e_n g = x}} 1 = 1.$$
		This implies that
		\begin{align*}
	S = \sum_{\substack{x \in \finiteField^n \setminus \left\{0\right\} \\
			x_1 = 0}}  S_x - \frac{1}{\sizeof{\multiplicativegroup{\finiteField}}} \sum_{\substack{x \in \finiteField^n \setminus \left\{0\right\} \\
			x_1 \ne 0}}  S_x = \left(q^{n-1}-1\right) - q^{n-1} = -1,
\end{align*}
as required.
\end{proof}

We move to prove \Cref{thm:rankin-selberg-gamma-factor-of-pi-times-pi-dual}.

\begin{proof}
	We will use \Cref{lem:gamma-factors-exceptional-case} in the following setup. Let $G = \GL_n\left(\finiteField\right) \times \GL_n\left(\finiteField\right)$, and let $H = \GL_n\left(\finiteField\right)$ embedded diagonally. Let $N = \left\{\identityMatrix{n} \right\}$ and $\Psi = 1$. 
	
	Let $\pi$ be an irreducible cuspidal representation of $\GL_n \left(\finiteField\right)$, then by Schur's lemma, the space $$\Hom_{\GL_n\left(\finiteField\right)}\left(\pi \otimes \contragredient{\pi}, \cComplex\right)$$ is one-dimensional. Since $\pi$ is cuspidal, By \cite[Theorem 2.2]{Gelfand70}, the restriction of $\pi$ to the mirabolic subgroup $\mirabolic_n$ is irreducible. Therefore, by Schur's lemma the space $$\Hom_{\mirabolic_n}\left(\representationRes{\mirabolic_n}\pi \otimes \representationRes{\mirabolic_n}\contragredient{\pi}, \cComplex \right)$$ is also one-dimensional. 
	
	We take $\tau = \whittaker\left(\pi, \fieldCharacter\right) \otimes \whittaker\left(\contragredient{\pi}, \fieldCharacter^{-1}\right)$, and $\ell \colon \whittaker\left(\pi, \fieldCharacter\right) \otimes \whittaker\left(\contragredient{\pi}, \fieldCharacter^{-1}\right) \rightarrow \cComplex$ to be the functional defined on pure tensors by $$\ell \left(W \otimes W'\right) = W\left(\identityMatrix{n}\right) \cdot W'\left(\identityMatrix{n}\right).$$ 
	We have that $\ell \in \Hom_{\unipotentSubgroup_n}\left(\representationRes{\unipotentSubgroup_n} \tau, 1\right)$. Let $v_0 = \repBesselFunction{\pi} \otimes \grepBesselFunction{\contragredient{\pi}}{\fieldCharacter^{-1}}$. 
	
	Consider $$\sum_{p \in \unipotentSubgroup_n \backslash \mirabolic_n} \sum_{n \in N} \ell\left( \tau\left(n p\right) v_0 \right) \Psi^{-1}\left(n\right) = \sum_{p \in \unipotentSubgroup_n \backslash \mirabolic_n} \repBesselFunction{\pi}\left(p\right) \grepBesselFunction{\contragredient{\pi}}{\fieldCharacter^{-1}}\left(p\right).$$
	 By \cite[Lemma 2.14]{Nien14}, we have that if $\repBesselFunction{\pi}\left(p\right) \ne 0$ for $p \in \mirabolic_n$, then  $p \in \unipotentSubgroup_n$. Therefore,  $$\sum_{p \in \unipotentSubgroup_n \backslash \mirabolic_n} \repBesselFunction{\pi}\left(p\right) \grepBesselFunction{\contragredient{\pi}}{\fieldCharacter^{-1}}\left(p\right) = \sum_{p \in \unipotentSubgroup_n \backslash \unipotentSubgroup_n} \repBesselFunction{\pi}\left(p\right) \grepBesselFunction{\contragredient{\pi}}{\fieldCharacter^{-1}}\left(p\right) = 1.$$
	Thus, we showed that the required properties for \Cref{lem:gamma-factors-exceptional-case} are satisfied.
	
	Using \Cref{prop:rankin-selberg-gamma-factor-m-equals-n-in-terms-of-bessel-functions}, we have
	$$ \rsGammaFactor{\pi}{\contragredient{\pi}} = \sum_{g \in \unipotentSubgroup_n \backslash \GL_n\left(\finiteField\right)} \repBesselFunction{\pi}\left(g\right) \grepBesselFunction{\contragredient{\pi}}{\fieldCharacter^{-1}}\left(g\right) \fieldCharacter\left(\bilinearPairing{e_n g^{-1}}{e_1}\right).$$
	Replacing $g$ with $g^{-1}$ and using \Cref{prop:contragredient-complex-conjugate}, we have
	$$ \rsGammaFactor{\pi}{\contragredient{\pi}} = \sum_{g \in \unipotentSubgroup_n \backslash \GL_n\left(\finiteField\right)} \repBesselFunction{\pi}\left(g\right) \grepBesselFunction{\contragredient{\pi}}{\fieldCharacter^{-1}}\left(g\right) \fieldCharacter\left(\bilinearPairing{e_n g}{e_1}\right),$$
	and therefore by \Cref{lem:gamma-factors-exceptional-case}
	$$ \rsGammaFactor{\pi}{\contragredient{\pi}} = \sum_{g \in \unipotentSubgroup_n \backslash \GL_n\left(\finiteField\right)} \sum_{n \in N} \ell\left( \tau\left(n g\right) v \right) \Psi^{-1}\left(n\right) \fieldCharacter\left(\bilinearPairing{e_n g}{e_1}\right) = -1,$$
	as required.
\end{proof}

\bibliographystyle{abbrv}
\bibliography{references}

\begin{thebibliography}{10}

\bibitem{atobe2017local}
H.~Atobe and W.~T. Gan.
\newblock Local theta correspondence of tempered representations and
  {L}anglands parameters.
\newblock {\em Invent. Math.}, 210(2):341--415, 2017.

\bibitem{chai2019bessel}
J.~Chai.
\newblock Bessel functions and local converse conjecture of {J}acquet.
\newblock {\em J. Eur. Math. Soc. (JEMS)}, 21(6):1703--1728, 2019.

\bibitem{chang1976decomposition}
B.~Chang.
\newblock Decomposition of {G}elfand-{G}raev characters of {${\rm
  GL}\sb{3}(q)$}.
\newblock {\em Comm. Algebra}, 4(4):375--401, 1976.

\bibitem{curtis2004zeta}
C.~W. Curtis and K.~Shinoda.
\newblock Zeta functions and functional equations associated with the
  components of the {G}elfand-{G}raev representations of a finite reductive
  group.
\newblock In {\em Representation theory of algebraic groups and quantum
  groups}, volume~40 of {\em Adv. Stud. Pure Math.}, pages 121--139. Math. Soc.
  Japan, Tokyo, 2004.

\bibitem{gan2016gross}
W.~T. Gan and A.~Ichino.
\newblock The {G}ross-{P}rasad conjecture and local theta correspondence.
\newblock {\em Invent. Math.}, 206(3):705--799, 2016.

\bibitem{gan2012representations}
W.~T. Gan and G.~Savin.
\newblock Representations of metaplectic groups {I}: epsilon dichotomy and
  local {L}anglands correspondence.
\newblock {\em Compos. Math.}, 148(6):1655--1694, 2012.

\bibitem{Gelfand70}
S.~I. Gel'fand.
\newblock Representations of the full linear group over a finite field.
\newblock {\em Math. USSR Sb.}, 12(13):13--39, 1970.

\bibitem{Jacquet1983rankin}
H.~Jacquet, I.~I. Piatetskii-Shapiro, and J.~A. Shalika.
\newblock Rankin-{S}elberg convolutions.
\newblock {\em Amer. J. Math.}, 105(2):367--464, 1983.

\bibitem{jiang2015towards}
D.~Jiang, C.~Nien, and S.~Stevens.
\newblock Towards the {J}acquet conjecture on the local converse problem for
  {$p$}-adic {${\rm GL}_n$}.
\newblock {\em J. Eur. Math. Soc. (JEMS)}, 17(4):991--1007, 2015.

\bibitem{katz1993estimates}
N.~M. Katz.
\newblock Estimates for {S}oto-{A}ndrade sums.
\newblock {\em J. Reine Angew. Math.}, 438:143--161, 1993.

\bibitem{Kondo1963}
T.~Kondo.
\newblock On {G}aussian sums attached to the general linear groups over finite
  fields.
\newblock {\em J. Math. Soc. Japan}, 15:244--255, 1963.

\bibitem{kowalski2014gaps}
E.~Kowalski.
\newblock Gaps between prime numbers and primes in arithmetic progressions
  [after {Y}. {Z}hang and {J}. {M}aynard].
\newblock {\em Ast\'{e}risque}, (367-368):Exp. No. 1084, ix, 327--366, 2015.

\bibitem{liu2022gamma}
B.~Liu and Q.~Zhang.
\newblock Gamma factors and converse theorems for classical groups over finite
  fields.
\newblock {\em J. Number Theory}, 234:285--332, 2022.

\bibitem{liu2022converse}
B.~Liu and Q.~Zhang.
\newblock On a converse theorem for {$\rm G_2$} over finite fields.
\newblock {\em Math. Ann.}, 383(3-4):1217--1283, 2022.

\bibitem{macdonald1998symmetric}
I.~G. Macdonald.
\newblock {\em Symmetric functions and Hall polynomials}.
\newblock Oxford university press, 1998.

\bibitem{Nien14}
C.~Nien.
\newblock A proof of the finite field analogue of {J}acquet's conjecture.
\newblock {\em Amer. J. Math.}, 136(3):653--674, 2014.

\bibitem{Nien17}
C.~Nien.
\newblock {$n\times 1$} local gamma factors and {G}auss sums.
\newblock {\em Finite Fields Appl.}, 46:255--270, 2017.

\bibitem{piatetskishapirobook}
I.~Piatetski-Shapiro.
\newblock {\em Complex Representations of $GL(2,K)$ for Finite Fields $K$},
  volume~16.
\newblock American Mathematical Society, 1983.

\bibitem{Roditty10}
E.-A. Roditty.
\newblock On gamma factors and bessel functions for representations of general
  linear groups over finite fields.
\newblock Master's thesis, Tel Aviv University, 2010.

\bibitem{shahidi1984fourier}
F.~Shahidi.
\newblock Fourier transforms of intertwining operators and {P}lancherel
  measures for {${\rm GL}(n)$}.
\newblock {\em Amer. J. Math.}, 106(1):67--111, 1984.

\bibitem{shahidi1990proof}
F.~Shahidi.
\newblock A proof of {L}anglands' conjecture on {P}lancherel measures;
  complementary series for {$p$}-adic groups.
\newblock {\em Ann. of Math. (2)}, 132(2):273--330, 1990.

\bibitem{shinoda2005representations}
K.~Shinoda and I.~Tulunay.
\newblock Representations of the {H}ecke algebra for {${\rm GL}_4(q)$}.
\newblock {\em J. Algebra Appl.}, 4(6):631--644, 2005.

\bibitem{SilbergerZink00}
A.~J. Silberger and E.-W. Zink.
\newblock The characters of the generalized {S}teinberg representations of
  finite general linear groups on the regular elliptic set.
\newblock {\em Trans. Amer. Math. Soc.}, 352(7):3339--3356, 2000.

\bibitem{soudry1979}
D.~Soudry.
\newblock On gamma functions of pairs over finite fields.
\newblock 1979.

\bibitem{Ye18}
R.~Ye.
\newblock Rankin-{S}elberg gamma factors of level zero representations of
  {${\rm GL}_n$}.
\newblock {\em Forum Math.}, 31(2):503--516, 2019.

\bibitem{YeZeligher18}
R.~Ye and E.~Zelingher.
\newblock Exterior square gamma factors for cuspidal representations of {${\rm
  GL}_n$}: finite field analogs and level-zero representations.
\newblock {\em Israel J. Math.}, 240(2):889--934, 2020.

\bibitem{ye2021epsilon}
R.~Ye and E.~Zelingher.
\newblock Epsilon factors of representations of finite general linear groups.
\newblock {\em J. Number Theory}, 221:122--142, 2021.

\bibitem{zelingher2022values}
E.~Zelingher.
\newblock On values of the {B}essel function for generic representations of
  finite general linear groups.
\newblock {\em Adv. Math.}, 434:Paper No. 109314, 47, 2023.

\bibitem{zhang2014bounded}
Y.~Zhang.
\newblock Bounded gaps between primes.
\newblock {\em Ann. of Math. (2)}, 179(3):1121--1174, 2014.

\end{thebibliography}
\end{document}